\documentclass[11pt]{amsart}
\usepackage[T1]{fontenc}
\usepackage[utf8]{inputenc}
\usepackage{lmodern}
\usepackage[margin=1in]{geometry}
\usepackage{amssymb,mathtools,mathrsfs}
\usepackage[expansion=false]{microtype}
\usepackage{enumitem,booktabs,array,needspace}
\usepackage{xcolor,graphicx}
\usepackage[colorlinks=true,linkcolor=blue,citecolor=blue,urlcolor=blue]{hyperref}
\newtheorem{theorem}{Theorem}[section]
\newtheorem*{introtheorem}{Theorem}
\newtheorem{proposition}[theorem]{Proposition}
\newtheorem{lemma}[theorem]{Lemma}
\newtheorem{corollary}[theorem]{Corollary}
\theoremstyle{definition}
\newtheorem{definition}[theorem]{Definition}

\newtheorem*{unnumberedremark}{Remark}
\theoremstyle{remark}

\numberwithin{equation}{section}
\allowdisplaybreaks
\title[Conifold factorization and GW theory]{Conifold factorization in topological recursion\\
and Gromov--Witten theory}
\author{Juping Chen}
\address{Juping Chen, Department of Mathematical Sciences, Tsinghua University, Haidian District, Beijing 100084, China}
\email{cjp24@mails.tsinghua.edu.cn}
\author{Linji Chen}
\address{Linji Chen, Beijing International Center for Mathematical Research, Peking University, 5 Yiheyuan Road, Beijing 100871, China}
\email{aco@stu.pku.edu.cn}
\author{Bohan Fang}
\address{Bohan Fang, Beijing International Center for Mathematical Research, Peking University, 5 Yiheyuan Road, Beijing 100871, China}
\email{bohanfang@gmail.com}
\author{Zhengyu Zong}
\address{Zhengyu Zong, Department of Mathematical Sciences, Tsinghua University, Haidian District, Beijing 100084, China}
\email{zyzong@mail.tsinghua.edu.cn}
\begin{document}
\begin{abstract}
We prove a factorization theorem for ordinary topological recursion near a regular nodal degeneration of a spectral curve, allowing logarithmic spectral coordinates. The partition function for closed genera $g\ge2$ factors into the partition function of the normalization, a universal Gaussian vacuum, and the exponential of a connected graph sum. This graph sum has strictly positive order in the vanishing period, and thus the factorization recovers the conifold gap, identifies the term of degree zero in the vanishing period with the free energy of the normalization, and gives finite graph formulas for the coefficients of positive powers of the period. At fixed genus, the regular series converges jointly in the period and the parameters along the nodal locus. The Gaussian neck tensors can be expressed in terms of relative Gromov--Witten invariants of a parametrized $\mathbb P^1$. As examples, we use remodeling to obtain Gromov--Witten factorizations for local $\mathbb P^2$ and local $\mathbb F_0$ in the conifold frame, with backgrounds $\mathbb C^3$ and the resolved conifold, respectively. The analogous local $\mathbb F_1$ factorization has background $\mathcal O(1)\oplus\mathcal O(-3)$ over $\mathbb P^1$.
\end{abstract}
\maketitle
\section{Introduction}
\label{sec:introduction}

\subsection{Background and motivation}
\label{subsec:intro-background}

Mirror symmetry relates the Gromov--Witten theory of a Calabi--Yau threefold to the geometry of its mirror. A regular conifold degeneration singles out a vanishing cycle and a natural expansion of the higher-genus free energies in its period. This expansion reflects both the universal local behavior near the collapsing cycle and its coupling to the remaining geometry. We study how these contributions combine, with particular attention to the geometric meaning of the regular part.

The physical interpretation of a conifold singularity in terms of massless states was developed by Strominger \cite{Strominger}. Ghoshal--Vafa \cite{GhoshalVafa} related the topological string near the conifold to the $c=1$ string at the self-dual radius, identifying a universal contribution to the higher-genus free energies. In the conventions of this paper, the expected behavior in the vanishing period $t$ is
\begin{equation}\label{eq:intro-gap}
 F_g(t)=\frac{B_{2g}}{2g(2g-2)}t^{2-2g}+O(1),\qquad g\geq2,
\end{equation}
where the Bernoulli numbers are defined by $z/(e^z-1)=\sum_{j\geq0}B_jz^j/j!$. The absence of the intermediate polar terms $t^{3-2g},\ldots,t^{-1}$ is called the \emph{conifold gap}. The holomorphic anomaly equations of Bershadsky--Cecotti--Ooguri--Vafa (BCOV) \cite{BCOV} give recursive relations among the higher-genus B-model amplitudes. Huang--Klemm \cite{HK} and Huang--Klemm--Quackenbush \cite{HKQ} used the gap to constrain the holomorphic ambiguities in their integration. The gap specifies the singular part of $F_g$. We study its finite part and positive-power coefficients.

On the mathematical side, the gap for local $\mathbb P^1$, namely the resolved conifold $\operatorname{Tot}_{\mathbb P^1}(\mathcal O(-1)\oplus\mathcal O(-1))$, follows from the multiple-cover formula of Faber--Pandharipande \cite[Theorem~3]{FP}. Explicit analytic-continuation formulas separating the universal pole from the regular terms appear in Pasquetti--Schiappa \cite[Equations~(4.17)--(4.18)]{PS} and Alim \cite[Theorem~4.6]{Alim}. For local $\mathbb P^2$, Coates--Iritani \cite{CI} established the conifold pole-order bound and verified the gap through genus seven. Brini \cite{Brini} subsequently proved the gap in all genera by identifying, up to an additive constant, the conifold Gromov--Witten potentials with the large-rank free energies of a convergent statistical ensemble of repulsive particles; this also completes the all-genus BCOV reconstruction for local $\mathbb P^2$. For semi-projective toric Calabi--Yau $3$-folds, \cite{CCFZ} and the remodeling theorem \cite{EO2015,FLZ} imply the gap in the conifold frame at regular one-node degenerations. More recently, Chang--Guo--You--Zhang \cite{CGYZ} proved the all-genus conifold gap for the quintic threefold, the one-parameter Fermat Calabi--Yau hypersurfaces of degrees $6$, $8$, and $10$, and local $\mathbb P^2$. Their proof uses mixed spin $P$-field theory and analogous master-space constructions to transfer a universal gap from a cone-vertex theory to Gromov--Witten theory. The cone-vertex theory is described as a translation of the cohomological field theory of a point by means of a formal Laplace transform. The same method also proves the gap for the associated Fan--Jarvis--Ruan--Witten theories.

For toric Calabi--Yau $3$-folds and orbifolds, mirror symmetry admits a formulation in terms of a mirror curve. Eynard--Orantin \cite{EO2007} introduced topological recursion, which associates correlation differentials and free energies to spectral data on a Riemann surface. Bouchard--Klemm--Mari\~no--Pasquetti (BKMP) \cite{BKMP,BKMPorb} proposed that this recursion on a framed mirror curve computes the open and closed Gromov--Witten potentials of the toric Calabi-Yau 3-folds. Eynard--Orantin \cite{EO2015} provided a proof of the Remodeling Conjecture for smooth semi-projective toric Calabi--Yau $3$-folds; Fang--Liu--Zong \cite{FLZ} proved it for general semi-projective toric Calabi--Yau $3$-orbifolds. The relation with the BCOV formalism is also reflected in the holomorphic anomaly equations for the non-holomorphic amplitudes constructed from spectral curves \cite{EMO}.

Mari\~no--Zakany \cite{MZ} and Kashaev--Mari\~no--Zakany \cite{KMZ} studied conifold expansions using matrix models obtained from quantized mirror curves. Using the spectral-theory/topological-string correspondence proposed by Grassi--Hatsuda--Mari\~no \cite{GHM}, their perturbative expansions around the Gaussian model compute regular coefficients and predict the finite constants for local $\mathbb P^2$ and local $\mathbb F_0$.

The behavior of topological recursion under singular limits was already studied in \cite{EO2007}. Iwaki--Kidwai \cite{IK} obtained exact free energies for hypergeometric-type curves, and Iorgov--Iwaki--Lisovyy--Zhuravlov \cite[Theorem~4.9]{IILZ} proved the conifold gap for the Painlev\'e~I elliptic family. More recently, the conifold-gap theorem of \cite{CCFZ} established the universal polar term and the joint holomorphy of the remainder for regular one-node degenerations of toric mirror curves. That result treats both separating and nonseparating nodes and allows general family deformations with spectator parameters along the nodal locus. We seek a geometric description of the holomorphic remainder.

Eynard's intersection formula \cite[Theorem~4.1]{EynardIntersection} expresses recursion invariants through local theories near the ramification points, with independently chosen bidifferentials. The differences between these local bidifferentials and the global one determine the edge weights connecting the local theories. Its proof groups connected local subgraphs into vertex correlators \cite[Appendix~A.3]{EynardIntersection}, an operation we use to organize the background and neck contributions.

In this paper, we prove a factorization into the theory of the normalization, a universal Gaussian vacuum, and an interaction determined by the germ of the spectral family. Here the background is the normalization with its inherited spectral functions and polarization, and a neck joins the node preimages. Every interaction has strictly positive order in the vanishing period. Thus the background determines the finite part, and finite connected graph sums determine all positive-power coefficients. As applications, we work out Gromov--Witten factorizations with a relative-$\mathbb P^1$ neck for local $\mathbb P^2$, local $\mathbb F_0$, and local $\mathbb F_1$.

\subsection{Statement of the main results}
\label{subsec:intro-results}

Our main result concerns ordinary topological recursion near a regular spectral node. Let $\mathcal S_{q,s}=(\Sigma_{q,s},x_{q,s},y_{q,s},B_{q,s})$ be a regular spectral-node family in the sense of Definition~\ref{def:regular-node}. Here $q$ is transverse to the nodal divisor and $s$ denotes the spectator parameters in a polydisc centered at $s_0$. The central fiber has one ordinary node; on its two branches, $dx$ is nonzero and the slopes $dy/dx$ are distinct. The other ramification points extend as disjoint simple zeros of $dx$, away from the node and punctures, with $dy\ne0$. The spectral coordinates may be meromorphic functions or local logarithms of meromorphic functions, as specified in Section~\ref{sec:setup}. The fundamental bidifferential $B_{q,s}$ is normalized in an adapted polarization, with zero period around the neck. For an oriented loop $\gamma$ around the neck, set
\[
 t=\frac{1}{2\pi i}\int_\gamma y_{q,s}\,dx_{q,s}.
\]
We require $t=q\,c(q,s)$ with $c$ holomorphic and $c(0,s)\neq0$, so that $(t,s)$ are local parameters.

Let $\nu:\widetilde\Sigma_{0,s}\to\Sigma_{0,s}$ be the normalization. The background spectral curve is
\[
 \mathcal B_s=
 \bigl(\widetilde\Sigma_{0,s},\nu^*x_{0,s},\nu^*y_{0,s},
 B^{\mathrm{back}}_s\bigr),
\]
where $B^{\mathrm{back}}_s$ has the inherited period normalization and is zero between distinct connected components. For a disconnected normalization, free energies are added and partition functions are multiplied over its components. We use the stable partition functions
\[
 Z(\mathcal S;\hbar)=\exp\!\left(\sum_{g\geq2}
 \hbar^{2g-2}F_g(\mathcal S)\right),\qquad
 Z_G(t;\hbar)=\exp\!\left(\sum_{g\geq2}
 \hbar^{2g-2}C_gt^{2-2g}\right),
 \quad C_g=\frac{B_{2g}}{2g(2g-2)}.
\]
Here $\hbar$ is formal; closed genera zero and one are omitted.

\begin{introtheorem}[Normalization and neck factorization; Theorem~\ref{thm:factorization}]
For a regular spectral-node family,
\begin{equation}\label{eq:intro-factorization}
 Z(\mathcal S_{t,s};\hbar)
 =Z(\mathcal B_s;\hbar)\,Z_G(t;\hbar)
   \exp\mathcal I(t,s;\hbar).
\end{equation}
The interaction is an explicit connected stable graph sum of the form
\[
 \mathcal I(t,s;\hbar)
 =\sum_{g\geq2}\hbar^{2g-2}\sum_{k\geq1}I_{g,k}(s)t^k.
\]
For every fixed $g$ and $k$, the coefficient $I_{g,k}$ is a finite sum over graphs and index assignments, determined by finitely many background tensor components and Taylor coefficients of the source and edge weights. Equivalently, for every $g\geq2$,
\begin{equation}\label{eq:intro-free-energy}
 F_g(\mathcal S_{t,s})
 =C_gt^{2-2g}+F_g(\mathcal B_s)
  +\sum_{k\geq1}I_{g,k}(s)t^k.
\end{equation}
The last series converges jointly in $(t,s)$ at fixed genus and belongs to $t\mathbb C\{t,s-s_0\}$.
\end{introtheorem}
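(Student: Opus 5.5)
We would prove the factorization by an Eynard-type decomposition of the recursion into local theories, following the strategy of \cite[Theorem~4.1 and Appendix~A.3]{EynardIntersection}. The first step is to choose local models. The ramification points of $\mathcal S_{q,s}$ fall into two classes.
\begin{itemize}
\item The ordinary ramification points $a_i(q,s)$ extend holomorphically to $q=0$. They are identified with the ramification points of the background $\mathcal B_s$.
\item A pair of ramification points $b_\pm(q,s)$ collides into the node as $q\to0$. This is forced by $dx$ being nonzero on both branches with distinct slopes: near the node the curve is locally $uv=q$, and $dx$ acquires two zeros in the neck annulus.
\end{itemize}
We write $F_g(\mathcal S)$ as a sum over stable graphs whose vertices carry local Airy or Bessel-type tensors, i.e.\ the Taylor data of $y\,dx$ at each ramification point. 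Edges carry the bidifferential $B_{q,s}$ expanded in local coordinates.

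The second step is to regroup this expansion. We collect all subgraphs supported on the $a_i$ with edges given by $B^{\mathrm{back}}_s$. Via the vertex-correlator grouping of \cite[Appendix~A.3]{EynardIntersection}, these resum to the background correlators $\omega^{\mathcal B}_{g,n}$. Subgraphs supported on $b_\pm$ with a local neck bidifferential resum to the correlators of a universal neck curve, which we take to be the Gaussian (conifold) curve $xy=t$ in the polarization with zero $\gamma$-period. The closed free energies of the Gaussian curve are exactly $C_gt^{2-2g}$; this is a known computation that we would cite or verify via the Gaussian matrix model. What remains are the edge corrections: the differences $B_{q,s}-B^{\mathrm{back}}_s$ on the background, $B_{q,s}$ minus the neck bidifferential on the neck, the mixed background–neck edges, and the deviation of the actual $y\,dx$ near $b_\pm$ from the Gaussian one (the source weight). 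Exponentiating connected graphs then gives $Z=Z(\mathcal B_s)\,Z_G\,\exp\mathcal I$, with $\mathcal I$ a sum of connected stable graphs containing at least one correction weight.

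The third step, which we expect to be the main obstacle, is to show that every correction weight carries a positive power of $t$ and that the coefficients are analytic. We would work in plumbing coordinates $u$, $v=q/u$ and rescale the neck coordinate by $\sqrt t$. The neck vertex tensors then scale like powers $t^{(2-2g-n)/2}$ times $t^{\text{degree}}$ coming from the local insertions. The background-side Taylor expansion of a correction edge, evaluated at a neck point, contributes $u^k$ with $u\sim\sqrt t$. The regularity of the node, including the holomorphic extension of $B_{q,s}$ across $q=0$ in the adapted polarization and the zero neck period, should give
\[
B_{q,s}-B^{\mathrm{back}}_s=O(q),
\]
together with a mixed-edge expansion starting at order $t^{1/2}$ per endpoint. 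In that case a power count shows that each connected graph with a correction has total $t$-order at least $1$, and that half-integer powers cancel under the symmetry $u\leftrightarrow v$. This cancellation is what separates the gap (no $t^{3-2g},\dots,t^{-1}$ and no $t^0$ beyond $F_g(\mathcal B_s)$) from naive pole counting. For logarithmic $x,y$ we would check that only $dx$, $dy$ enter the local data, so the same expansions apply.

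Finally, at fixed $g$ and $k$ the power count bounds the number of edges, the legs, and the Taylor degree. Hence $I_{g,k}$ is a finite sum of background tensor components and Taylor coefficients, all holomorphic in $s$. Joint convergence in $(t,s)$ would follow by instead writing the remainder $F_g-C_gt^{2-2g}-F_g(\mathcal B_s)$ as a residue-formula expression holomorphic in $(q,s)$, using the joint holomorphy from \cite{CCFZ}. This identifies its Taylor series with $\sum_k I_{g,k}t^k$ and places it in $t\,\mathbb C\{t,s-s_0\}$.
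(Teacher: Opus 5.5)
Your architecture (local theories, regrouping into background and Gaussian correlators, source and edge corrections, then finiteness and an analytic identification) is close to the paper's. The paper, however, produces the graph sum by integrating the fixed-$x$ variational formulas \cite{EO2007,ABDKS} along $(\lambda_{\mathrm{ref}}+z\,\delta\lambda,\,B_{\mathrm{ref}}+z\,\delta B)$, not from Eynard's intersection formula.

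The genuine gap is in Step~3: positive $t$-order does not follow from power counting plus the $u\leftrightarrow v$ symmetry. Since $w\mapsto\sqrt t\,w$ multiplies $\lambda_G$ by $t$ and fixes $B_G$, a genus-$h$ neck vertex with insertions of modes $m_1,\dots,m_n$ naively has order $t^{\frac12\sum_i m_i-(2h-2+n)}$ (not $t^{(2-2h-n)/2}$). This is negative for large $h$ and small modes. For instance, a single genus-$h$ neck vertex with one mode-$4$ source has coefficient $a_4(0,s)\neq0$ in general (it is proportional to $M_2(0,s)$). Your count therefore permits a term of order $t^{3-2h}$, contradicting the gap for $h\ge2$; the symmetry only removes odd $\sum_i m_i$.

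The missing ingredient is Proposition~\ref{prop:gaussian-moments}. Paired with the global neck modes $U^m/m$ and $-V^m/m$ on the neck boundary (equivalently, residues at $w=\infty$ and $w=0$), the Gaussian correlators are Wick pairing counts. In these counts the exponent of $t$ equals the number of faces plus $\sum_i j_i$, hence is at least $1$, and every moment with nonpositive naive exponent vanishes identically. This is the actual mechanism of the gap. It is invisible termwise in the local Airy data at $b_\pm$, where it arises only from cancellations between $b_+$ and $b_-$ and across Taylor orders. You must therefore re-expand the neck sources and correction edges in the modes $e_{\pm,m}$ with $m\ge1$. The zero neck period of $B$ and the equality of the periods of $\lambda_{\mathrm{odd}}$ and $\lambda_G$ are what exclude a $dU/U$ term and make that expansion possible.

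Two later steps are asserted rather than proved. First, finiteness at fixed $(g,k)$. A neck source of mode $m\le2$ raises $\chi_A$ by $1$ and $D$ by $m/2\le1$, so with an $O(1)$ coefficient it does not raise the $t$-order. For example, a genus-$2$ neck vertex with one mode-$8$ source and $n$ mode-$2$ sources has order exactly $t$ for every $n$. These moments are nonzero multiples of $\mathcal A_{2,1}(8;t)=21t/8$, because an $\mathcal L_2$ insertion acts on Gaussian moments as an Euler operator. You need $a_1,a_2\in t\,\mathcal O_s\{t\}$. This holds only because the Gaussian parameter is the exact period and the neck is rescaled by $\sqrt c$ with $c(0,s)=M_0(0,s)/2$, as in Proposition~\ref{sew:local-form}.

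Second, the joint holomorphy of the remainder from \cite{CCFZ} does not by itself identify its Taylor coefficients with your regrouped graph sum. At fixed genus that sum is an infinite sum over modes, reorganized by $t$-order. The paper bridges this in three steps: it shows that the Laurent coefficients in $\epsilon=\sqrt t$ of $F_g(\lambda_z,B_z)$ are polynomials in $z$; it matches them with the formal graph sum order by order in $z$ on a fixed $\epsilon$-circle; and it then sets $z=1$.
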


The theorem includes separating and nonseparating nodes with backgrounds of arbitrary genus. It gives the conifold gap and identifies the finite part:
\[
 \lim_{t\to0}\bigl(F_g(\mathcal S_{t,s})-C_gt^{2-2g}\bigr)
 =F_g(\mathcal B_s).
\]
The factor $Z_G$ is the universal Gaussian vacuum. The neck factor $Z_G\exp\mathcal I$ also includes the coupling to the background and depends on the germ of the spectral family.

The vertices of the graph sum are background correlators and Gaussian residue moments. The latter are finite sums over pairings, and every stable moment with at least one insertion is divisible by $t$ (Proposition~\ref{prop:gaussian-moments}). Each source leaf carries a coefficient $-d_I$ of the one-form difference from the separated background and Gaussian neck; each edge carries a coefficient $k_{IJ}$ of the bidifferential difference, computed by sewing \cite{Yamada,GKN,HuNorton,CCFZ}. On the neck, $d_{\pm,1}$ and $d_{\pm,2}$ are $O(t)$. Integrating the variation formulas of Eynard--Orantin and Kazarian \cite{EO2007,ABDKS}, with contour integrals over the neck boundary and around the background ramification points, gives the connected graph expansion.

The source sums vanish at genus-zero vertices with at most two other half-edges (Lemma~\ref{sew:source-cancellation}). This cancellation, the positive $t$-order of the Gaussian moments, and the $O(t)$ background sources and edges give finite graph formulas at each genus and order in $t$, with zero constant interaction term. The Gaussian free energies are given by the Bernoulli formula in Proposition~\ref{prop:gaussian-vacuum}. A fixed-genus residue argument identifies the graph coefficients with the convergent expansion of the free energy.

The Gaussian neck tensors also admit a Gromov--Witten interpretation. By the correspondence between Hurwitz numbers and relative Gromov--Witten invariants \cite{OP}, the Gaussian pairing sums can be written as finite sums of connected relative invariants of a parametrized $\mathbb P^1$ with three relative points. The resulting stable neck tensors have positive $t$-order (Proposition~\ref{prop:gw-neck-identification}). The Gaussian vacuum is the scaling limit of the positive-degree resolved-conifold potential given by the multiple-cover formula \cite{FP}.

We illustrate the factorization through the local $\mathbb P^2$, local $\mathbb F_0$, and local $\mathbb F_1$ geometries. These examples give explicit background contributions coupled to the same universal neck. For a smooth surface $S$, write $K_S$ also for the total space of its canonical bundle, and put
\[
 R=\operatorname{Tot}_{\mathbb P^1}
       \bigl(\mathcal O(-1)\oplus\mathcal O(-1)\bigr).
\]
The backgrounds for $K_{\mathbb P^2}$ and $K_{\mathbb F_0}$ are $\mathbb C^3$ and $R$, respectively. Their stable closed couplings are $g_s/3$ and $g_s/2$. The background tensors are contour integrals of the continued framed open Gromov--Witten potentials in Section~\ref{sec:gw}. In these statements, the superscript $c$ denotes the conifold calibration of Subsection~\ref{subsec:gw-conventions}: analytic continuation from large radius along a fixed path, followed by the prescribed change to the conifold polarization. The spectral period and coupling are converted by $t_A=i t$ and $\hbar=i g_s$. We use the superscript $A$ for Gromov--Witten potentials.

Set
\[
 A_g=\frac{(-1)^g|B_{2g}B_{2g-2}|}
 {2(2g)(2g-2)(2g-2)!},\qquad
 a_g=\frac{|B_{2g}|}{2g(2g-2)!}.
\]
For spectator parameters near background large radius, with the continuation and framings fixed in Section~\ref{sec:gw}, Corollary~\ref{gw:toric-factorization} gives, for $g\geq2$,
\begin{align}
 F_g^{K_{\mathbb P^2},c,A}(t_A)
 &=C_gt_A^{2-2g}+3^{2-2g}A_g
   +\sum_{k\geq1}I^{\mathbb P^2,A}_{g,k}t_A^k,
 \label{eq:intro-local-p2}\\
 F_g^{K_{\mathbb F_0},c,A}(t_A,r)
 &=C_gt_A^{2-2g}
   +2^{2-2g}\bigl(2A_g+a_g\operatorname{Li}_{3-2g}(-r)\bigr)
   +\sum_{k\geq1}I^{\mathbb F_0,A}_{g,k}(r)t_A^k,
 \label{eq:intro-local-f0}
\end{align}
where $0<|1/r|$ is sufficiently small. The degree-zero terms are included. Every coefficient is determined by the connected contraction of the background open potentials with the universal relative-$\mathbb P^1$ neck, using the source and edge coefficients of the spectral family. The remainders are holomorphic at $t_A=0$, jointly in the regular spectator parameters. These identities follow from the spectral factorization and the remodeling theorem \cite{FLZ}. These expansions are in parallel with the matrix-model statements of Mari\~no--Zakany and Kashaev--Mari\~no--Zakany \cite{MZ,KMZ}.

For $K_{\mathbb F_1}$, the background is
\[
 X_2=\operatorname{Tot}_{\mathbb P^1}
          \bigl(\mathcal O(1)\oplus\mathcal O(-3)\bigr),
 \qquad Q=-q(1-q)^3,
\]
with coupling $g_s$ and the equivariant weights and framing specified in Subsection~\ref{subsec:gw-backgrounds}. Its closed theory is defined by localization, and its open theory by formal-relative Gromov--Witten theory or open localization \cite{LLLZ,FangLiu}. Mari\~no \cite{MarinoLocalCurves} proposed the spectral description, Eynard \cite{EynardPartitions} proved the closed correspondence, and Guo--Xu--Zhang \cite{GXZ} established the open and closed comparison. Proposition~\ref{gw:f1} gives the comparison in our conventions. Corollary~\ref{gw:f1-factorization} then gives, for sufficiently small nonzero values of the spectator parameter $q$,
\begin{equation}\label{eq:intro-local-f1}
 F_g^{K_{\mathbb F_1},c,A}(t_A,q)
 =C_gt_A^{2-2g}
  +F_g^{X_2,A}\bigl(-q(1-q)^3\bigr)
  +\sum_{k\geq1}I^{\mathbb F_1,A}_{g,k}(q)t_A^k,
 \qquad g\geq2.
\end{equation}

\begin{unnumberedremark}
This paper is a sequel to \cite{CCFZ}, which established the conifold gap and joint holomorphy of the remainder for regular toric one-node degenerations. Building on the normal-form and sewing analysis developed there, we identify the finite part with the free energy of the normalized background and express the coefficients of positive powers of the vanishing period as finite connected graph sums. The resulting factorization also recovers the gap.
\end{unnumberedremark}

\subsection{Outline of the paper}
\label{subsec:intro-outline}

In Section~\ref{sec:setup}, we specify the spectral data, the regularity and polarization assumptions, and the normalized background. In Subsection~\ref{sec:gaussian}, we compute the Gaussian residue moments and recall its free-energy formula. In Subsection~\ref{sec:sewing}, we construct the local normal form, normalized sewing, and contour integrals that determine the sources and edges. Section~\ref{sec:proof} integrates the relative variation formulas into a connected stable graph sum, proves positive period order, finiteness at each genus and period order, and analytic convergence, and concludes with the factorization theorem. Section~\ref{sec:gw} illustrates the theorem through three Gromov--Witten examples. It identifies their normalized backgrounds and the universal relative-GW neck, states the three factorizations, and identifies the background contour insertions with closed GW ancestors.

\subsection{Use of artificial intelligence}
\label{subsec:intro-ai-disclosure}

This work involved extensive, multi-round interaction with generative AI. Codex and Claude assisted with the writing. The authors verified the mathematical arguments and the final text.

\subsection*{Acknowledgements}

The work of Bohan Fang is partially supported by the National Key R\&D Program of China 2023YFA1009803, NSFC 12125101, and NSFC 11890661. The work of Zhengyu Zong is partially supported by NSFC grant No.~11701315 and the Natural Science Foundation of Beijing, China grant No.~1252008.

\section{Spectral data and sewing}
\label{sec:setup}
\subsection{Conventions and hypotheses}
\label{subsec:spectral-conventions}

For spectral data $(\Sigma,x,y,B)$, set $\lambda=y\,dx$. We use Eynard--Orantin recursion \cite{EO2007}, with residues at the zeros of $dx$ in $\Sigma$ and $\omega_{0,2}=B$. Its correlators $\omega_{g,n}$ are symmetric meromorphic differentials. A type $(g,n)$ is stable when $2g-2+n>0$. The recursion kernel and the genus-$g$ free energy are
\begin{align}
 K_a(p,z)&=\frac{\displaystyle\frac12\int_z^{\sigma_a(z)}B(p,\cdot)}
 {(y(z)-y(\sigma_a(z)))\,dx(z)},\label{eq:recursion-kernel}\\
 F_g&=\frac1{2g-2}\sum_{a:dx(a)=0}
 \operatorname{Res}_{p=a}\Phi_a(p)\omega_{g,1}(p),
 \qquad d\Phi_a=\lambda,\quad g\geq2.\label{eq:closed-convention}
\end{align}
Here $\sigma_a$ is the local involution at $a$, and $\Phi_a$ is a local primitive of $\lambda$. The stable recursion omits factors of type $(0,1)$. We use the signed input $\omega_{0,1}=-\lambda$: in terms of this input, the same kernel integrates from $\sigma_a(z)$ to $z$ and divides by $\omega_{0,1}(z)-\sigma_a^*\omega_{0,1}(z)$. Our free energy satisfies $F_g=-F^{(g)}_{\mathrm{EO07}}$ in the notation of \cite[Definition~4.3]{EO2007}. Set
\begin{equation}\label{eq:stable-partition}
 Z=\exp\sum_{g\geq2}\hbar^{2g-2}F_g,\qquad
 C_g=\frac{B_{2g}}{2g(2g-2)},\qquad
 Z_G(t;\hbar)=\exp\sum_{g\geq2}\hbar^{2g-2}C_gt^{2-2g},
\end{equation}
where $z/(e^z-1)=\sum B_jz^j/j!$ and $\hbar$ is formal.

We use local parameters $(q,s)\in\Delta\times S$, where $\Delta$ is a disc about $0$ and $S$ is a polydisc about $s_0$ with coordinates $s=(s_1,\ldots,s_r)$.

Let $\overline{\mathscr C}\to\Delta\times S$ be a proper flat holomorphic family of connected reduced compact curves. Its fibers $\overline\Sigma_{q,s}$ are smooth for $q\ne0$, and each $\overline\Sigma_{0,s}$ has exactly one ordinary node and is otherwise smooth. Let $\Sigma_{q,s}$ be the complement of finitely many pairwise disjoint holomorphic puncture sections, all disjoint from the node. The nodal locus is the divisor $q=0$. Following \cite{CCFZ}, we call $s$ the spectator parameters; they describe the variation of the nodal curve and its remaining spectral data.

Each of $x_{q,s}$ and $y_{q,s}$ is either a meromorphic function on $\overline\Sigma_{q,s}$ that is regular on $\Sigma_{q,s}$, or a local logarithm of such a function that is nowhere zero on $\Sigma_{q,s}$. The underlying meromorphic functions vary holomorphically with the family away from the puncture sections. We call $x,y$ the spectral coordinates and set $\lambda_{q,s}=y_{q,s}\,dx_{q,s}$. In the logarithmic case, we choose local holomorphic branches near the node and every ramification point of the spectral $x$-map that extend holomorphically with the family across $q=0$. On each smooth compactified fiber, let $B_{q,s}$ be a fundamental bidifferential.

The node is nonseparating if the normalization of $\overline\Sigma_{0,s}$ is connected, and separating if that normalization has two connected components. For $q\ne0$ small, the smoothing of a neighborhood of the node is an annulus, which we call the neck. Fix an orientation of its core circle $\gamma$, the vanishing cycle.
\begin{definition}\label{def:regular-node}
We call $\mathcal S_{q,s}=(\Sigma_{q,s},x_{q,s},y_{q,s},B_{q,s})$ a regular spectral-node family if it satisfies the following conditions.
\begin{enumerate}[label=\textup{(\roman*)},ref=\roman*]
\item\label{cond:node-branches} Let $p_+,p_-$ be the node preimages on the central normalization. The local branches of $x,y$ are chosen consistently near the node, and their pullbacks have common values at $p_+,p_-$. On both branches the $x$-differential is nonzero, and the slopes are distinct:
\[
 dx(p_\pm)\ne0,\qquad
 \left.\frac{dy}{dx}\right|_{p_+}\ne
 \left.\frac{dy}{dx}\right|_{p_-}.
\]
For $q\ne0$ sufficiently small, these node conditions give exactly two simple ramification points of $x$ tending to the node as $q\to0$, with $dy\ne0$ at both. We call these points colliding and all other ramification points non-colliding.
\item\label{cond:noncolliding-ramification} The non-colliding ramification points extend over $\Delta\times S$ as disjoint holomorphic sections, away from the node and punctures. They are simple zeros of $dx$, with $dy\ne0$, including at $q=0$.
\item\label{cond:period-normalization} Choose symplectic bases on the connected components of the compactified normalization, represented away from the node preimages, and transport their $A$-cycles to nearby smooth fibers through the complement of the neck. In the nonseparating case, take $\gamma$ as $A_1$ and adjoin these transported cycles. Their integral span is the $A$-polarization adapted to the node. The cycles are transported locally constantly with the family, and $B_{q,s}$ has zero periods along them.
\item\label{cond:transverse-period} The period of $\lambda_{q,s}$ extends holomorphically to $q=0$ and satisfies
  \begin{equation}\label{eq:exact-period}
   t=\frac1{2\pi i}\int_\gamma\lambda_{q,s},
   \qquad t(0,s)=0,\qquad \partial_qt(0,s)\ne0.
  \end{equation}
\end{enumerate}
\end{definition}

By condition~\textup{(\ref{cond:transverse-period})}, after shrinking $\Delta$ and $S$, the period $t$ can be used in place of $q$, so we may use $(t,s)$ as parameters.

For example, for $v^2=u^2-q$, $x=u$, $y=u^2v$, the two branches at $q=0$ have the same slope $dy/dx=0$ at the node, so condition~\textup{(\ref{cond:node-branches})} fails. The period is $q^2/8$ when $\gamma$ is clockwise in $u+v$, so condition~\textup{(\ref{cond:transverse-period})} also fails although the smoothing is transverse.

\begin{definition}[Background]\label{def:background}
Let $\nu:\widetilde\Sigma_{0,s}\to\Sigma_{0,s}$ be the normalization. The background spectral curve is
\begin{equation}\label{eq:background}
 \mathcal B_s=
 (\widetilde\Sigma_{0,s},\nu^*x_{0,s},\nu^*y_{0,s},
       B_s^{\mathrm{back}}),
\end{equation}
where $B_s^{\mathrm{back}}$ is the fundamental bidifferential on the compactified normalization, normalized by the component $A$-cycles chosen in condition~\textup{(\ref{cond:period-normalization})} and set to zero between distinct components. We call these the inherited $A$-cycles. Write $\omega_{h,n}^{\mathrm{back}}=\omega_{h,n}(\mathcal B_s)$ and $Z_{\mathrm{back}}=Z(\mathcal B_s;\hbar)$. For a disconnected normalization, the partition function is the product over its connected components.
\end{definition}

The punctures of $\widetilde\Sigma_{0,s}$ are inherited from $\Sigma_{0,s}$. The two node preimages $p_+,p_-$ are smooth points of $\widetilde\Sigma_{0,s}$, where the inherited $x,y$ have holomorphic local branches and $dx(p_\pm)\ne0$ by condition~\textup{(\ref{cond:node-branches})}.

\subsection{The universal Gaussian neck}\label{sec:gaussian}

For $t\ne0$, define the universal neck $G_t$ to be the spectral curve on $\mathbb P^1\setminus\{0,\infty\}$ with data
\begin{equation}\label{gau:curve}
 \begin{gathered}
 U=w,\qquad V=t/w,\qquad UV=t,\qquad
 x_G=U+V,\qquad y_G=(U-V)/2,\\
 B_G=\frac{dw_1\,dw_2}{(w_1-w_2)^2},\qquad
 \lambda_G=y_G\,dx_G
 =\tfrac12(w-2t/w+t^2/w^3)\,dw.
 \end{gathered}
\end{equation}
Fix $\rho>0$. For $0<|t|<\rho^2$, the annulus $|t|/\rho<|w|<\rho$ contains the ramification points $\pm\sqrt t$. The period $(2\pi i)^{-1}\int\lambda_G$ along any core circle oriented clockwise in $w$ equals $t$. The map $w\mapsto\sqrt t\,w$ pulls back the spectral one-form of $G_t$ to $t$ times that of $G_1$, and preserves $B_G$. The kernel~\eqref{eq:recursion-kernel} gives $\omega^{G_1}_{1,1}=w^3\,dw/(w^2-1)^4$.

\subsubsection{Gaussian residue moments}
\label{subsubsec:gaussian-moments}
For $m\ge1$, define the residue functionals $\mathcal L_m^+$ at $\infty$ and $\mathcal L_m^-$ at $0$ on meromorphic one-form germs $\omega$ by
\begin{equation}\label{gau:end-moments}
 \mathcal L_m^+\omega
 =-\operatorname{Res}_{w=\infty}\frac{U^m}{m}\omega,
 \qquad
 \mathcal L_m^-\omega
 =\operatorname{Res}_{w=0}\frac{V^m}{m}\omega.
\end{equation}
The residues use the local coordinates $1/w$ at $\infty$ and $w$ at $0$. They depend only on the respective Laurent expansions, so the same formulas apply to formal Laurent differentials. We apply these functionals one variable at a time to the Gaussian recursion differentials, leaving a differential in the remaining variables.

For a stable Gaussian correlator, write $\omega=\omega^{G_t}_{h,n}$. As a differential in $w$, $\omega$ is meromorphic on $\mathbb P^1$, with poles only at $w=\pm\sqrt t$. The involution $\iota(w)=t/w$ fixes these points, and the linear loop equation says that $\omega+\iota^*\omega$ is holomorphic there. The sum is holomorphic on $\mathbb P^1$, which has no nonzero holomorphic one-forms. Hence $\iota^*\omega=-\omega$. Since $\iota$ exchanges the punctures $0,\infty$ and $V\circ\iota=U$, invariance of residues under change of coordinate gives
\[
 \begin{aligned}
 \mathcal L_m^-\omega
 &=\operatorname{Res}_{w=0}\frac{V^m}{m}\omega
  =\operatorname{Res}_{w=\infty}\frac{U^m}{m}\,\iota^*\omega\\
 &=-\operatorname{Res}_{w=\infty}\frac{U^m}{m}\omega
  =\mathcal L_m^+\omega.
 \end{aligned}
\]

For $n>0$, $2h-2+n>0$, and $m_i\ge1$, define the Gaussian residue moments by
\[
 \mathcal A_{h,n}(\mathbf m;t)
 =\left(\prod_{i=1}^n\mathcal L_{m_i}^+\right)
   \omega^{G_t}_{h,n},\qquad \mathbf m=(m_1,\ldots,m_n).
\]
In each variable, $\mathcal L_{m_i}^+$ may be replaced by $\mathcal L_{m_i}^-$ without changing the moment. Let $\{\mathbf v_m:m\ge1\}$ be a basis of a complex vector space. For fixed $t$, define the Gaussian tensor to be the symmetric $n$-linear form determined by
\[
 \mathcal A_{h,n}(\mathbf v_{m_1},\ldots,\mathbf v_{m_n};t)
 =\mathcal A_{h,n}(m_1,\ldots,m_n;t).
\]

We express the classical Gaussian pairing expansion \cite[Section~2]{ACNP} in this residue basis. For positive integers $k_1,\ldots,k_n$ with $\sum_i k_i=2E$, fix a permutation $\sigma\in S_{2E}$ whose cycles have lengths $k_1,\ldots,k_n$. A pairing is a fixed-point-free involution $\alpha$ of the $2E$ labelled slots. Define
\begin{equation}\label{gau:pairing-count}
 N_{h,n}(\mathbf k)=\#\left\{\alpha:
 \begin{array}{l}
 \langle\sigma,\alpha\rangle\text{ acts transitively},\\
 n-E+\#\operatorname{cycles}(\sigma\alpha)=2-2h
 \end{array}\right\}.
\end{equation}
We set the count to zero when $\sum_i k_i$ is odd. For stable $(h,n)$ we also set it to zero when any $k_i=0$: the trace $\operatorname{Tr}M^0=N$ is constant and has no connected stable cumulants. Each involution contributes one labelled pairing with weight one. Write the rescaled Chebyshev polynomials as
\begin{equation}\label{gau:chebyshev}
 \begin{gathered}
 C_m(x;t)=U^m+V^m
 =\sum_{j=0}^{\lfloor m/2\rfloor}c(m,j)t^jx^{m-2j},\\
 c(m,j)=(-1)^j\frac{m}{m-j}\binom{m-j}{j},\qquad m\ge1.
 \end{gathered}
\end{equation}

\begin{proposition}[Gaussian residue moments]\label{prop:gaussian-moments}
For $n>0$, $2h-2+n>0$, and $m_i\ge1$, the residue moment $\mathcal A_{h,n}(\mathbf m;t)$ vanishes for odd $\sum_i m_i$; otherwise
\begin{equation}\label{gau:finite-moments}
 \begin{aligned}
 \mathcal A_{h,n}(\mathbf m;t)
 &=\frac{t^{\frac12\sum_i m_i+2-2h-n}}{\prod_i m_i}\\
 &\quad\times
 \sum_{\substack{0\le j_i\le\lfloor m_i/2\rfloor\\1\le i\le n}}
 \left(\prod_i c(m_i,j_i)\right)
 N_{h,n}(m_1-2j_1,\ldots,m_n-2j_n).
 \end{aligned}
\end{equation}
Every nonzero residue moment in \eqref{gau:finite-moments} is divisible by $t$.
\end{proposition}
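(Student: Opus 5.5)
Write $\mathbf m=(m_1,\ldots,m_n)$, $|\mathbf m|=\sum_i m_i$, and $\chi=2-2h-n$. The plan is to identify the Gaussian neck $G_t$ with the spectral curve of the Gaussian Hermitian matrix model. Then the residue moments become connected correlators of traces, and these are expanded through the pairing (Wick) formula of \cite[Section~2]{ACNP}.

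\textbf{Step 1: rescaling.} I first reduce to $t=1$. The map $w\mapsto\sqrt t\,w$ preserves $B_G$ and multiplies $\lambda_G$ by $t$. Hence, with the convention $\omega_{0,1}=-\lambda$, the standard homogeneity of the recursion gives
\[
\omega^{G_t}_{h,n}=t^{2-2h-n}\,\omega^{G_1}_{h,n}
\]
under this pullback. Since $U^m$ pulls back to $t^{m/2}U^m$, we get
\[
\mathcal A_{h,n}(\mathbf m;t)=t^{|\mathbf m|/2+\chi}\,\mathcal A_{h,n}(\mathbf m;1),
\]
which produces the power of $t$ in \eqref{gau:finite-moments}. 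Parity vanishing follows from the symmetry $w\mapsto -w$. This symmetry preserves $B_G$ and negates $x_G$ and $y_G$, so it preserves $\lambda_G$. Therefore each $\omega^{G_1}_{h,n}$ is invariant under the simultaneous sign flip. Meanwhile $U^m\mapsto(-1)^mU^m$, so the moment acquires the factor $(-1)^{|\mathbf m|}$ and vanishes when $|\mathbf m|$ is odd.

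\textbf{Step 2: matrix model identification at $t=1$.} The curve $x=w+1/w$, $y=(w-1/w)/2$ is the Zhukovsky parametrization of the semicircle curve $y^2=x^2/4-1$. It is known (Eynard--Orantin) that its correlators, expanded at $x=\infty$, i.e. $w=\infty$, give the connected Gaussian cumulants:
\[
\omega_{h,n}\sim\pm\sum_{\mathbf k}\prod_i \frac{dx_i}{x_i^{k_i+1}}\;\langle\textstyle\prod\operatorname{Tr}M^{k_i}\rangle_{h,\mathrm{conn}}.
\]
I will fix the sign using the paper's convention $F_g=-F_{\mathrm{EO07}}$ and check it on $\omega_{1,1}=w^3dw/(w^2-1)^4$ against $\langle\operatorname{Tr}M^2\rangle_1=1$. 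Extracting with $-\operatorname{Res}_\infty U^m\omega/m$ requires writing $U^m$ as a polynomial in $x$ modulo terms vanishing at $\infty$. Since $U^m=C_m(x;1)-V^m$, and $V^m=O(w^{-m})$ contributes only through regular parts, I expect to show the $V^m$ piece has zero residue against $\omega$. The reason is that $V^m\omega$ is holomorphic at $\infty$, because $\omega$ decays like $dw/w^{2}$. What remains is $-\operatorname{Res}_\infty C_m(x)\,\omega/m$, which equals $\frac1m\sum_j c(m,j)\langle\operatorname{Tr}M^{m-2j}\cdots\rangle$. Applying this in each variable and using the Wick expansion, with connected cumulants of fixed genus counted by $N_{h,n}$ through the Euler characteristic condition $n-E+\#\mathrm{cycles}(\sigma\alpha)=2-2h$, gives \eqref{gau:finite-moments}. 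Terms with $k_i=0$ drop out, as stated in the paper.

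\textbf{Step 3: divisibility.} I must show the exponent $|\mathbf m|/2+2-2h-n\ge1$ whenever the sum is nonzero. A nonzero term requires some $\mathbf k=\mathbf m-2\mathbf j$ with $N_{h,n}(\mathbf k)\neq0$, and then every $k_i\ge1$. A connected genus-$h$ map with $n$ faces and $E=|\mathbf k|/2$ edges has $V=\#\mathrm{cycles}(\sigma\alpha)\ge1$ vertices, with $V=2-2h-n+E$. Thus
\[
|\mathbf m|/2+2-2h-n\;\ge\;E+2-2h-n\;=\;V\;\ge\;1,
\]
which proves divisibility by $t$.

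\textbf{Main obstacle.} I expect the hardest part to be Step 2: pinning down the exact normalization and sign linking the recursion correlators of $G_1$, in the paper's conventions, to the Gaussian cumulants. This includes the normalization $\operatorname{Tr}M^2$-pairing weight one and the orientation of the residue at $\infty$. I would settle it by comparing $\omega_{0,3}$ and $\omega_{1,1}$ directly, and then appeal to the uniqueness of solutions of the loop equations.
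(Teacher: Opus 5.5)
Your proposal is correct and follows the paper's proof: the Eynard--Orantin matrix-model identification, the Wick expansion into the pairing counts $N_{h,n}$, replacing $U^m$ by $C_m(x_G;t)$ because stable correlators are holomorphic at $\infty$, and positivity from $E+2-2h-n+\sum_i j_i\ge1$. Reducing to $t=1$ via $w\mapsto\sqrt t\,w$ and deriving odd vanishing from $w\mapsto -w$ are harmless variants of the paper's direct bookkeeping with $N=t/\hbar$.

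One slip in your normalization check: the genus-one cumulant of $\operatorname{Tr}M^2$ is $0$, not $1$. Compare $\omega_{1,1}=w^3\,dw/(w^2-1)^4$ with $\langle\operatorname{Tr}M^4\rangle_1=1$ instead, i.e.\ $\mathcal A_{1,1}(4;1)=1/4$.
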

\begin{proof}
We first express Gaussian matrix moments in terms of the pairings counted by $N_{h,n}$. Let $M$ be a Gaussian Hermitian matrix of size $N$, with entry covariance $\langle M_{ab}M_{cd}\rangle=\hbar\delta_{ad}\delta_{bc}$. Wick's formula expands the connected cumulant $\langle\prod_i\operatorname{Tr}M^{k_i}\rangle_c$ as a sum over pairings that connect all $n$ traces. The permutation $\sigma$ records the cyclic order of the entries in each trace, and $\alpha$ records the paired entries. Thus connectedness is the transitivity condition in \eqref{gau:pairing-count}. Each pair contributes $\hbar$, and each cycle of $\sigma\alpha$ gives a free matrix index and hence a factor $N$. Writing $f=\#\operatorname{cycles}(\sigma\alpha)$ and substituting $N=t/\hbar$, a genus-$h$ pairing has weight
\[
 \hbar^EN^f=\hbar^{2h-2+n}t^f,
 \qquad f=E+2-2h-n.
\]

Applying the matrix-model correlation formula of Eynard--Orantin \cite[Section~10.1.2]{EO2007} to the Gaussian potential, with the kernel~\eqref{eq:recursion-kernel}, identifies $\omega^{G_t}_{h,n}$ with the coefficient of $\hbar^{2h-2+n}$ in
\[
 \left\langle\prod_{i=1}^n\operatorname{Tr}(x_i-M)^{-1}\right\rangle_c
 \prod_{i=1}^n dx_i,\qquad x_i=x_G(w_i).
\]
Expanding $(x-M)^{-1}=\sum_{k\ge0}M^kx^{-k-1}$ shows that the negative residue at infinity against $x^k\,dx$ extracts the corresponding trace moment. Since $x_G\sim w$ at infinity, this gives
\[
 \left(\prod_{i=1}^n-\operatorname{Res}_{w_i=\infty}
 x_G(w_i)^{k_i}\right)\omega^{G_t}_{h,n}
 =N_{h,n}(\mathbf k)t^{E+2-2h-n},
 \qquad 2E=\sum_i k_i.
\]

We now pass from powers of $x_G$ to the residue functionals $\mathcal L_m^+$. A stable Gaussian differential is holomorphic at infinity, so multiplication by $V^m=t^mw^{-m}$ gives zero residue there. We may therefore replace $U^m$ by $C_m(x_G;t)=U^m+V^m$ in \eqref{gau:end-moments}. Expanding each $C_{m_i}/m_i$ by \eqref{gau:chebyshev} yields the sum in \eqref{gau:finite-moments}: choosing its $j_i$-th term sets $k_i=m_i-2j_i$ and contributes $c(m_i,j_i)t^{j_i}/m_i$. If $\sum_i m_i$ is odd, every resulting total $\sum_i k_i$ is odd, so there are no pairings.

Finally, every contributing pairing has at least one face. Its total power of $t$ is therefore
\begin{equation}\label{gau:positive-order}
 \tfrac12\sum_i m_i+2-2h-n=f+\sum_i j_i\ge1.
\end{equation}
The exponent is the same for every term in the sum. Thus every nonzero moment is divisible by $t$; if this exponent is nonpositive, all pairing counts vanish.
\end{proof}
For example, $\mathcal A_{1,1}(4;t)=t/4$ and $\mathcal A_{2,1}(8;t)=21t/8$.

\subsubsection{Gaussian free energies}
\label{subsubsec:gaussian-vacuum}
We recall the classical Gaussian free-energy formula. Its coefficient $C_g$ is the orbifold Euler characteristic $\chi_{\mathrm{orb}}(\mathcal M_g)$ computed by Harer--Zagier \cite{HZ}.
\begin{proposition}[Gaussian free energies]\label{prop:gaussian-vacuum}
For $g\ge2$,
\begin{equation}\label{gau:vacuum-result}
 F_g(G_t)=\frac1{2g-2}
 \sum_{a=\pm\sqrt t}\operatorname{Res}_{w=a}
 \Phi\,\omega^{G_t}_{g,1}
 =C_gt^{2-2g},\qquad d\Phi=\lambda_G.
\end{equation}
\end{proposition}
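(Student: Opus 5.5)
We first reduce to $t=1$ by scaling, and then identify the remaining constant with the Gaussian matrix-model free energy, which Harer--Zagier computed.

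\emph{Scaling.} Under $w\mapsto\sqrt t\,w$ the spectral one-form satisfies $\lambda_{G_t}\mapsto t\,\lambda_{G_1}$, and $B_G$ is preserved. The kernel \eqref{eq:recursion-kernel} is homogeneous of degree $-1$ in $\omega_{0,1}$. An induction on $2g-2+n$ then gives $\omega^{G_t}_{g,n}=t^{2-2g-n}\omega^{G_1}_{g,n}$ after pullback. The primitive $\Phi$ scales as $t\,\Phi_1$, up to an additive constant. That constant does not affect the residue: $\omega_{g,1}$ is odd under $\iota$ by the argument preceding Proposition~\ref{prop:gaussian-moments}, and at a simple ramification point the residue of an odd form is zero. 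This needs a short check, because $\omega_{g,1}$ has high-order poles at $\pm\sqrt t$, but a constant times a form with no residue contributes nothing. Hence $F_g(G_t)=t^{2-2g}F_g(G_1)$, and it remains to show $F_g(G_1)=C_g$.

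\emph{Identification with the matrix model.} The cleanest route is the matrix-model identification already used in the proof of Proposition~\ref{prop:gaussian-moments}: the recursion of \cite{EO2007} applied to the Gaussian spectral curve reproduces the topological expansion of the Gaussian Hermitian model. For the closed sector we would invoke \cite[Section~10]{EO2007}, which identifies the symplectic invariants $F^{(g)}_{\mathrm{EO07}}$ of the Gaussian curve with the genus-$g$ free energy of the Gaussian model. The sign convention $F_g=-F^{(g)}_{\mathrm{EO07}}$ must be tracked. Harer--Zagier \cite{HZ} computed the latter as the orbifold Euler characteristic $\chi_{\mathrm{orb}}(\mathcal M_g)=B_{2g}/(2g(2g-2))$, appearing in the normalization with $N=t/\hbar$.

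\emph{Self-contained alternative.} If one prefers to avoid quoting \cite{EO2007} for the closed sector, use the dilaton equation, which gives $(2-2g)F_g=\sum_a\operatorname{Res}\Phi\,\omega_{g,1}$ (the defining formula \eqref{eq:closed-convention} already encodes this). Then use the $t$-derivative variation formula. Since $\partial_t\lambda_G$ at fixed $x$ is a second-kind form dual to the $A$-period, $\partial_t F_g$ equals an integral of $\omega_{g,1}$ against a cycle or residue functional. By the scaling, this reduces to a single residue moment of $\omega^{G_1}_{g,1}$ at the punctures, computable from Proposition~\ref{prop:gaussian-moments}. Combined with homogeneity, this gives $(2-2g)F_g(G_1)$ as a finite pairing count. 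That count is the number of one-face gluings, matching the Harer--Zagier generating function, though one must sum the $\log$ derivative correctly.

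The main obstacle is fixing the normalization and sign. We must check that our convention (the signed input $-\lambda$, period $t$ clockwise, and $N=t/\hbar$) produces $+C_g$ and not $-C_g$, with no factor-of-two discrepancy from $y_G=(U-V)/2$. We would check this at $g=2$: compute $\omega^{G_1}_{2,1}$ from the recursion, starting from the given $\omega^{G_1}_{1,1}=w^3\,dw/(w^2-1)^4$, evaluate the residue of $\Phi\,\omega_{2,1}$, and compare with $C_2=-1/240$. This also confirms the consistency check $\mathcal A_{2,1}(8;t)=21t/8$.
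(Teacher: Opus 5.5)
\textbf{Main route.} Your scaling step is correct and isolates the constant cleanly. The pullback by $w\mapsto\sqrt t\,w$ multiplies $x_G$ and $y_G$ by $\sqrt t$ and preserves $B_G$ and the involution, so $\omega^{G_t}_{g,1}=t^{1-2g}\omega^{G_1}_{g,1}$. The additive constant in $\Phi$ is harmless because the $\iota$-odd form $\omega_{g,1}$ has zero residue at $\pm\sqrt t$.

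After that, your argument and the paper's both rest on citations, but from different sources. The paper quotes a direct topological-recursion computation for exactly this curve, \cite[Theorem~4.9]{IKT} for the Weber curve, with $\lambda=t$ and $w=\sqrt t\,z$. It then notes that the opposite signs of the kernel and of the closing denominator there cancel. This fixes normalization and sign in all genera at once, and the $t$-dependence is built in, so no scaling step is needed. The matrix-model formula \cite[equation~(3.19)]{PS} is only mentioned alongside.

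Your route through the matrix-model correspondence of \cite{EO2007} can work, but you should make explicit where the answer comes from. The Gaussian integral has no vacuum Wick graphs, so its $F_g$ is nonzero only because of the $\mathrm{vol}(U(N))$ normalization, i.e.\ the Barnes-function asymptotics. The correspondence must be invoked in that normalization, and the value should be credited to that asymptotics rather than to Harer--Zagier. In addition, a check at $g=2$ fixes only a $g$-independent sign or scale. It cannot detect a discrepancy of the form $(-1)^g$, such as $\hbar\mapsto i\hbar$ conventions produce, so conventions must be tracked through the citation. Note also that in the paper's convention the closing formula is $(2g-2)F_g=\sum_a\operatorname{Res}\Phi\,\omega_{g,1}$, not $(2-2g)F_g$.

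\textbf{The ``self-contained alternative'' has a real gap.} Write $\Phi=\tfrac14(U^2-V^2)-t\log U$. The polynomial part contributes $\mathcal L^+_2\omega^{G_t}_{g,1}=\mathcal A_{g,1}(2;t)=0$. Integrating the logarithmic part by parts gives
\[
 (2g-2)F_g(G_t)=t\int_0^\infty\omega^{G_t}_{g,1},
\]
along a path from $w=0$ to $w=\infty$. This agrees with the variation along $\partial_t\lambda_G|_x=-dw/w$, which is a third-kind form, not a second-kind one. The resulting functional is the logarithmic ($m=0$) analogue of the moments, which lies outside Proposition~\ref{prop:gaussian-moments}. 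It is not a single residue moment at the punctures: each $\mathcal A_{g,1}(m;t)$ with $m\ge1$ scales as $t^{m/2+1-2g}$, whereas $\int_0^\infty\omega^{G_t}_{g,1}$ scales as $t^{1-2g}$.

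At fixed $g$ you could still rewrite it as a $g$-dependent finite combination of moments, since odd forms with bounded poles at $\pm\sqrt t$ form a finite-dimensional space. Obtaining $C_g$ for every $g$, however, then requires the full Harer--Zagier one-point generating function and a nontrivial summation that produces the Bernoulli numbers. That is exactly the all-genus input the citation supplies, so the alternative does not replace it.
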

\begin{proof}
The Gaussian matrix-model formula appears in \cite[equation~(3.19)]{PS}. A proof for the spectral curve used here can be found in \cite[Theorem~4.9]{IKT}: the parametrization in \cite[equation~(4.2)]{IKT} agrees with $G_t$ when the parameter $\lambda$ of Iwaki--Koike--Takei equals $t$ and $w=\sqrt t\,z$. The recursion kernel and closing denominator in \cite{IKT} both have the opposite sign to ours, so the closed free energies agree.
\end{proof}
In particular, $F_2(G_t)=-1/(240t^2)$.

\Needspace{10\baselineskip}
\subsection{Sewing the spectral data}
\label{sec:sewing}

\subsubsection{The local normal form}
\label{subsubsec:local-normal-form}

We adapt the normal-form, period, and plumbing-coordinate constructions of \cite[Propositions~2.2--2.4]{CCFZ} to the coefficients used in the graph expansion. Write $\mathcal O_s$ for the ring of holomorphic germs at $s_0$, and $\mathcal O_s\{t\}$ for the ring of germs holomorphic jointly near $(t,s)=(0,s_0)$, expressed as convergent power series in $t$.

\begin{proposition}\label{sew:local-form}
For a family satisfying Definition~\ref{def:regular-node}, there are holomorphic local coordinates and a transverse parameter $\tau$ in which
\begin{equation}\label{sew:weierstrass}
 x=x_0(\tau,s)+u,\qquad v^2=u^2-\tau,\qquad
 y=y_{\mathrm{ev}}(u,\tau,s)+M(u,\tau,s)v,
 \qquad M(0,0,s)\ne0.
\end{equation}
Choose the sign of $v$ so that $\gamma$ is clockwise in the coordinate $u+v$. Write $M=\sum_{k\ge0}M_k(\tau,s)u^k$. The period $t$ is
\begin{equation}\label{sew:period-unit}
 t=\tau c(\tau,s),\qquad
 c(\tau,s)=\sum_{r\ge0}
 \frac{\operatorname{Cat}_r}{2^{2r+1}}M_{2r}(\tau,s)\tau^r,
 \qquad \operatorname{Cat}_r=\frac{1}{r+1}\binom{2r}{r}.
\end{equation}
The holomorphic function $c$ satisfies $c(0,s)=M_0(0,s)/2\ne0$. After shrinking the parameter domain, choose a holomorphic square root of $c$ and set
\begin{equation}\label{sew:period-coordinates}
 U=\sqrt c\,(u+v),\qquad V=\sqrt c\,(u-v),\qquad UV=t.
\end{equation}
On the neck, define the spectral functions
\begin{equation}\label{sew:scaled-spectral-functions}
 X=2\sqrt c\,(x-x_0)=U+V,
 \qquad Y=\frac{y-y_{\mathrm{ev}}}{2\sqrt c}.
\end{equation}
At fixed parameters, $Y\,dX=\lambda_{\mathrm{odd}}$, where
\[
 \lambda_{\mathrm{odd}}=(y-y_{\mathrm{ev}})\,dx=Mv\,du,
 \qquad \lambda_G=\frac12(U-V)^2\frac{dU}{U}.
\]
Their difference has a holomorphic primitive
\begin{equation}\label{sew:source-primitive}
 \lambda_{\mathrm{odd}}-\lambda_G=d\Psi_A,
 \qquad
 \Psi_A=\sum_{m\ge1}\frac{a_m(t,s)}{m}(U^m-V^m).
\end{equation}
The coefficients satisfy
\begin{equation}\label{sew:low-modes}
 a_m\in\mathcal O_s\{t\},\qquad a_1,a_2\in t\mathcal O_s\{t\},
 \qquad [t^r]a_m\ne0\ \Longrightarrow\ m+2r\ge3.
\end{equation}
\end{proposition}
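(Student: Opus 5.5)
The plan is to follow the normal-form construction of \cite[Propositions~2.2--2.4]{CCFZ}, adapted to the coefficients $a_m$.

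\textbf{Normal form.} First I would put the family in the form \eqref{sew:weierstrass}. At the node, condition~(\ref{cond:node-branches}) gives $dx(p_\pm)\ne0$, so $x$ restricted to each branch is a local coordinate. I would take $u=x-x_0(\tau,s)$, where $x_0$ is chosen so that the two colliding ramification points sit at $u=\pm\sqrt\tau$. By Weierstrass preparation applied to the local equation of the smoothing, the neck is a double cover of the $u$-disc branched at these two points. This gives $v^2=u^2-\tau$ for a transverse parameter $\tau$ with $\partial_q\tau\ne0$.

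Next I would split the holomorphic function $y$ (holomorphic in the chosen branches) under the deck involution $v\mapsto-v$ as $y=y_{\mathrm{ev}}+Mv$. On the central fiber, $v=\pm u$ on the two branches. Hence $2M(0,0,s)$ is the difference of the slopes $dy/dx$ at $p_\pm$, which is nonzero by condition~(\ref{cond:node-branches}).

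\textbf{Period.} The even part $y_{\mathrm{ev}}\,dx$ is single-valued and holomorphic on the $u$-disc, so it has zero period along $\gamma$. Only $\lambda_{\mathrm{odd}}=Mv\,du$ contributes. I would deform $\gamma$ to a large circle in $u$ and expand
\[
 v=u\sqrt{1-\tau/u^2}=u-\sum_{r\ge0}\frac{\operatorname{Cat}_r}{2^{2r+1}}\tau^{r+1}u^{-2r-1}.
\]
The residue then picks out $M_{2r}$ against $u^{-2r-1}$. The clockwise orientation in $u+v$ (where $u+v\sim2u$) fixes the sign, which gives \eqref{sew:period-unit} and $c(0,s)=M_0(0,s)/2$.

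Condition~(\ref{cond:transverse-period}) then lets us replace $\tau$ by $t$. After shrinking the parameter domain, $\tau$ is a holomorphic function of $(t,s)$. With $U,V$ as in \eqref{sew:period-coordinates}, we get $UV=c(u^2-v^2)=t$, and the identities $X=U+V$ and $Y\,dX=\lambda_{\mathrm{odd}}$ are immediate from the definitions.

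\textbf{The primitive $\Psi_A$.} On a fixed annulus $|t|/\rho<|U|<\rho$, the form $\lambda_{\mathrm{odd}}-\lambda_G$ is holomorphic. It is odd under $\iota:U\leftrightarrow V$: indeed $\iota$ is the deck involution $v\mapsto-v$, $du$ is even, and $\lambda_G$ is odd by direct inspection. Both forms have period $t$ along $\gamma$, so the difference is exact and has no $dU/U$ term. Its Laurent expansion in $U$, with $V=t/U$, integrates to a single-valued primitive. Choosing this primitive odd under $\iota$ kills the constant and forces the shape $\sum_m\frac{a_m}{m}(U^m-V^m)$.

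I would define the coefficients as contour integrals over a fixed circle $|U|=r_0$, for example $a_m=\frac{1}{2\pi i}\oint U^{-m}\,d\Psi_A$ up to normalization. This exhibits each $a_m$ as holomorphic jointly in $(t,s)$, so $a_m\in\mathcal O_s\{t\}$.

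\textbf{Low modes (main obstacle).} The step I expect to be hardest is \eqref{sew:low-modes}. I would use a weighting argument: give $U,V,u,v$ weight $1$ and $t,\tau$ weight $2$. A term $M_k u^k v\,du$, with $M_k$ expanded in powers of $\tau$, then has weight at least $k+2$. The form $\lambda_G=\frac12(U-V)^2\,dU/U$ is homogeneous of weight $2$. A monomial $t^r\,d(U^m)$ has weight $m+2r$. So it suffices to show that the weight-$2$ part of $\lambda_{\mathrm{odd}}$ equals $\lambda_G$ exactly.

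The weight-$2$ part is $M_0(0,s)\,v\,du$. Writing $u,v$ in terms of $U,V$ at $c=c(0,s)=M_0(0,s)/2$, this is
\[
 M_0(0,s)\,\frac{U-V}{2\sqrt c}\cdot\frac{dU+dV}{2\sqrt c}=\lambda_G.
\]
The delicate point is that $c$ and $M_0$ themselves depend on $\tau$. I would handle this by expanding $c(\tau,s)$ around $c(0,s)$: every correction term carries a positive power of $\tau$ and hence raises the weight. It follows that every surviving term satisfies $m+2r\ge3$. In particular, the $t^0$ coefficients vanish for $m=1,2$, so $a_1,a_2\in t\mathcal O_s\{t\}$.
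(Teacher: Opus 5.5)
Your proposal is correct and follows the paper's proof step for step: the Weierstrass normal form with $y=y_{\mathrm{ev}}+Mv$, the Catalan period computed from $Mv\,du$ alone, an $\iota$-odd exact difference with no $dU/U$ term, and the low modes from the leading-order match $c(0,s)=M_0(0,s)/2$. The differences are cosmetic: you build the double cover from $x$ and then split $y$ under the involution, where the paper takes $(x,y)$ as embedding coordinates (using the distinct slopes) and completes the square twice; and your weight filtration with $t=UV$ is a graded version of the paper's restriction to the branch $V=0$ at $t=0$. One small point of logic to tighten: $\partial_q\tau(0,s)\ne0$ is not an output of the normal form but follows afterwards from $t=\tau\,\widetilde c(q,s)$ with $\widetilde c(0,s)=M(0,0,s)/2\ne0$ and condition~(\ref{cond:transverse-period}).
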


\begin{proof}
On the normalization of the nodal fiber $q=0$, $x$ is a local coordinate near each $p_\pm$ because $dx(p_\pm)\ne0$. Write $y=f_+(x,s)$ near $p_+$ and $y=f_-(x,s)$ near $p_-$, using a common $x$-disc. Their distinct tangent slopes imply that $x-x(p_\pm)$ and $y-y(p_\pm)$ generate the maximal ideal of the central nodal fiber. The relative embedding criterion, together with flatness, then gives a local hypersurface equation for the family in $(x,y,q,s)$. Weierstrass preparation in $y$ makes this equation quadratic in $y$. Completing the square gives
\[
 (y-y_{\mathrm{ev}}(x,q,s))^2=D(x,q,s),
 \qquad D(x,0,s)=\frac14\bigl(f_+(x,s)-f_-(x,s)\bigr)^2.
\]
The difference $f_+-f_-$ has a simple zero at the node because the slopes differ. Thus $D(x,0,s)$ has a zero of order exactly two. Applying Weierstrass preparation in $x$ and completing the square again gives
\[
 D(x,q,s)=h(x,q,s)\bigl((x-x_0(q,s))^2-\tau(q,s)\bigr),
 \qquad \tau(0,s)=0,
\]
where $h$ is holomorphic and nonzero at the node. After shrinking the neighborhood, choose a holomorphic square root of $h$ and set $u=x-x_0$, $M(u,q,s)=\sqrt{h(x_0+u,q,s)}$, and $v=(y-y_{\mathrm{ev}})/M$. These coordinates give $v^2=u^2-\tau$ and $y=y_{\mathrm{ev}}+Mv$, with $M(0,0,s)\ne0$. We now regard $y_{\mathrm{ev}}$ as a function of $(u,q,s)$; all coefficients still depend on the original parameters $(q,s)$.

The one-form $y_{\mathrm{ev}}\,du$ has a holomorphic primitive $H_{\mathrm{ev}}(u,q,s)$, so its integral around the neck is zero. We may therefore compute the period using $\lambda_{\mathrm{odd}}=Mv\,du$.

Put $W=u+v$, so $u-v=\tau/W$. Then
\[
 \lambda_{\mathrm{odd}}
 =\frac14M\!\left(\frac{W+\tau/W}{2},q,s\right)(W-\tau/W)^2\frac{dW}{W},
 \qquad
 \operatorname{Res}_{W=0}u^{2r}v\,du
 =-\frac{\operatorname{Cat}_r}{2^{2r+1}}\tau^{r+1}.
\]
Odd powers of $u$ have zero residue. The displayed residues therefore give $t=\tau\widetilde c(q,s)$, where $\widetilde c$ is holomorphic and $\widetilde c(0,s)=M(0,0,s)/2\ne0$. Holomorphic dependence follows by extracting Laurent coefficients on a fixed $W$-circle. By \eqref{eq:exact-period}, $\partial_q t(0,s)\ne0$, so differentiating $t=\tau\widetilde c$ at $q=0$ gives $\partial_q\tau(0,s)\ne0$. Thus $\tau$ is transverse to the nodal locus, and we may use $(\tau,s)$ as parameters. In these parameters the residue calculation gives exactly~\eqref{sew:period-unit}, with $c(\tau,s)=\widetilde c(q(\tau,s),s)$. Since $c(0,s)\ne0$, the inverse $\tau=\tau(t,s)$ is holomorphic.

The functions in~\eqref{sew:scaled-spectral-functions} satisfy $Y\,dX=\lambda_{\mathrm{odd}}$, and hence
\begin{equation}\label{sew:numerator}
 \lambda_{\mathrm{odd}}-\lambda_G
 =F\frac{dU}{U},\qquad
 F=\left[\frac{M((U+V)/(2\sqrt c),\tau,s)}{4c}-\frac12\right](U-V)^2.
\end{equation}
The relation $UV=t$ and symmetry under $U\leftrightarrow V$ give the unique convergent expansion $F=a_0+\sum_{m\ge1}a_m(U^m+V^m)$, with coefficients holomorphic in $(t,s)$. Both $\lambda_{\mathrm{odd}}$ and $\lambda_G$ have integral $2\pi i t$ around $\gamma$: for $\lambda_G$, substitute $V=t/U$ and read off the term $-t\,dU/U$. Since $\gamma$ is clockwise,
\[
 0=\frac1{2\pi i}\int_\gamma(\lambda_{\mathrm{odd}}-\lambda_G)=-a_0.
\]
At fixed $(t,s)$, the identity
\[
 d\!\left(\frac{U^m-V^m}{m}\right)
 =(U^m+V^m)\frac{dU}{U}
\]
then gives~\eqref{sew:source-primitive} by termwise integration.

At $t=0$, restrict to the branch $V=0$. Since $c(0,s)=M(0,0,s)/2$, the bracket in~\eqref{sew:numerator} vanishes at $U=0$, so $F(U,0)=O(U^3)$. Comparing coefficients gives $a_1(0,s)=a_2(0,s)=0$, and holomorphy makes both $a_1$ and $a_2$ divisible by $t$. This proves~\eqref{sew:low-modes}: for $m=1,2$ a nonzero coefficient requires $r\ge1$, while for $m\ge3$ the inequality $m+2r\ge3$ holds for every $r\ge0$.
\end{proof}

Subtracting $y_{\mathrm{ev}}$ from $y$ on the neck leaves the recursion kernel unchanged because $y_{\mathrm{ev}}$ has the same value on the two sheets over $x$. The stable correlators are therefore unchanged. The subtraction also preserves the residues defining $F_g$ in~\eqref{eq:closed-convention}. Indeed, at either colliding ramification point $a$, the linear loop equation for EO recursion states that $\omega_{g,1}+\sigma_a^*\omega_{g,1}$ is holomorphic. Since $H_{\mathrm{ev}}$ is holomorphic and fixed by $\sigma_a$, invariance of residues under $\sigma_a$ gives
\[
 \operatorname{Res}_{p=a}H_{\mathrm{ev}}\,\omega_{g,1}
 =\frac12\operatorname{Res}_{p=a}H_{\mathrm{ev}}
       \bigl(\omega_{g,1}+\sigma_a^*\omega_{g,1}\bigr)=0,
 \qquad g\ge2.
\]

\subsubsection{The normalized bidifferential}
\label{subsubsec:normalized-bidifferential}

To compute $B$, we use the capped-family construction of \cite[Theorem~3.1]{CCFZ}, retaining the transverse variation of the complement. Replacing the neck by two discs gives smooth compact curves. Their normalized bidifferential is the first term in the sewing expansion on the complement.

Fix $0<\rho<R$ within the chart $UV=t$ and restrict to $|t|<\rho^2$. Let $N_{t,s}=\{|U|\leq\rho,\ |V|\leq\rho\}$ be the neck and $C_{t,s}=\overline\Sigma_{t,s}\setminus N_{t,s}$ its complement. For $t\ne0$, the neck is the region $|t|/\rho\leq|U|\leq\rho$. Define $\overline\Sigma^{\mathrm{cap}}_{t,s}$ to be the compact curve obtained by attaching a separate disc $|h_\pm|<R$ at each of the two ends of $C_{t,s}$. Glue the discs along $\rho<|h_\pm|<R$ by
\[
 (U,V)=(h_+,t/h_+),\qquad (U,V)=(t/h_-,h_-).
\]
Thus $h_+$ extends $U$ across one disc and $h_-$ extends $V$ across the other. The disc centers $h_\pm=0$ are two new smooth points. The transition maps are holomorphic in the fiber coordinates and $(t,s)$, so the capped curves form a smooth proper holomorphic family, retaining the given variation of the complement $C_{t,s}$. At $t=0$, these centers correspond to $p_\pm$, and $\overline\Sigma^{\mathrm{cap}}_{0,s}$ is the normalization of $\overline\Sigma_{0,s}$. Sewing the discs by $h_+h_-=t$ recovers the original fiber.

Let $B^{\mathrm{cap}}$ be the fundamental bidifferential on $\overline\Sigma^{\mathrm{cap}}_{t,s}$, normalized by the inherited $A$-cycles on each component and set to zero between distinct components. Its central value is $B^{\mathrm{cap}}(0,s)=B_s^{\mathrm{back}}$.

The complement $C_{t,s}$ is naturally an open subset of both the original and capped curves. We use this identification to restrict $B^{\mathrm{cap}}$ to $C_{t,s}\times C_{t,s}$.

For $a,b\in\{+,-\}$ and $m,n\ge1$, define
\begin{equation}\label{sew:seam-matrix}
 \begin{aligned}
 b_{am}(p)&=[h_a^{m-1}dh_a]B^{\mathrm{cap}}(p,h_a),\\
 E_{am,bn}&=[h_a^{m-1}h_b^{n-1}dh_a\,dh_b]
                 B^{\mathrm{cap,reg}}(h_a,h_b),\\
 (P_t)_{am,bn}&=-\delta_{b,\bar a}\delta_{mn}\frac{t^m}{m},
 \qquad \Theta_{+,m}=U^{m-1}dU,\quad \Theta_{-,m}=V^{m-1}dV.
 \end{aligned}
\end{equation}
Here $\bar a$ denotes the opposite cap. When both arguments lie in cap $a$, set
\[
 B^{\mathrm{cap,reg}}(p,q)=B^{\mathrm{cap}}(p,q)
 -\frac{dh_a(p)\,dh_a(q)}{(h_a(p)-h_a(q))^2}.
\]
Between distinct caps, $B^{\mathrm{cap,reg}}$ equals $B^{\mathrm{cap}}$, which is zero between distinct capped components.

Each $b_{am}$ is a meromorphic one-form on the capped curve, with its only pole at the center of cap $a$; in the formulas below, we use its restriction to $C_{t,s}$. The forms $\Theta_{+,m}$ and $\Theta_{-,m}$ are defined on the neck $N_{t,s}$ in the coordinates $U$ and $V$. Regard $b$ and $\Theta$ as row vectors. We identify $N_{t,s}$ with the closed annulus $|t|/\rho\leq|w|\leq\rho$ in $G_t$ by $w=U$. Thus $\lambda_G$ is the one-form of~\eqref{gau:curve}, and on $N_{t,s}\times N_{t,s}$ we use $B_G(p,q)=dU(p)\,dU(q)/(U(p)-U(q))^2$.

For $t\ne0$, suppress the parameters and write $N=N_{t,s}$ and $C=C_{t,s}$. Define the restrictions of $B$ by
\[
 B_{CC}=B|_{C\times C},\qquad
 B_{NC}=B|_{N\times C},\qquad
 B_{NN}=B|_{N\times N}.
\]

\begin{proposition}\label{sew:plumbing}
The restrictions of $B$, normalized by the adapted $A$-cycles, satisfy
\begin{equation}\label{sew:plumbing-formulas}
 \begin{aligned}
 B_{CC}(p,q)&=B^{\mathrm{cap}}(p,q)+b(p)P_t(I-EP_t)^{-1}b(q)^{\mathsf T},\\
 B_{NC}(p,q)&=\Theta(p)(I-EP_t)^{-1}b(q)^{\mathsf T},\\
 B_{NN}(p,q)&=B_G(p,q)+\Theta(p)(I-EP_t)^{-1}E\Theta(q)^{\mathsf T}.
 \end{aligned}
\end{equation}
The matrix series converge normally for sufficiently small $t$, locally uniformly in $s$ and on smaller coordinate domains. The entries of $(I-EP_t)^{-1}$ and $(I-EP_t)^{-1}E$, and the one-forms $b_{am}$, are holomorphic in $(t,s)$. In particular,
\begin{equation}\label{sew:kernel-orders}
 B_{CC}-B^{\mathrm{back}}=O(t).
\end{equation}
This estimate is understood after pullback to fixed coordinate discs in the complement. Each neck variable in $B_{NC}$ and $B_{NN}-B_G$ is expanded in the one-forms $\Theta_{a,m}$ with $m\ge1$; in particular, there is no $dU/U$ term.
\end{proposition}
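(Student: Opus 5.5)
The plan is to build a candidate bidifferential from the displayed formulas and then check that it is the unique normalized fundamental bidifferential. The classical Yamada sewing argument gives this structure, in the form adapted in \cite[Theorem~3.1]{CCFZ}.

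First we establish convergence. On $C$, the caps are glued along $\rho<|h_\pm|<R$. Cauchy estimates for $B^{\mathrm{cap,reg}}$ on the polydisc $|h_a|,|h_b|<R'$ give $|E_{am,bn}|\le K R'^{-m-n}$, uniformly for $(t,s)$ near $(0,s_0)$. This is because the capped family is smooth and proper with holomorphic dependence, so $B^{\mathrm{cap}}$ varies holomorphically in $(t,s)$. Since $|(P_t)_{am,\bar a m}|=|t|^m/m$, the operator $EP_t$ acts on weighted sequence spaces with norm $O(|t|/R'^2)$. Thus $(I-EP_t)^{-1}$ exists as a Neumann series for small $|t|$, and its entries are holomorphic in $(t,s)$. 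The same weighted estimates, with $|b_{am}|\lesssim \rho'^{-m}$ on $C$ and $|\Theta_{a,m}|\lesssim\rho^{m}$ on $N$, give normal convergence of all three series on slightly shrunk domains. In $B_{CC}$ the correction carries the explicit factor $P_t=O(t)$, which gives \eqref{sew:kernel-orders}. The absence of a $dU/U$ term is visible from the formulas, because only $\Theta_{a,m}$ with $m\ge1$ appear.

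Second, we verify that the three pieces glue to a single symmetric bidifferential on $\overline\Sigma_{t,s}$. On the overlap of $C$ and $N$, we expand $B^{\mathrm{cap}}(p,q)$ with $q$ in cap $a$. The regular part expands in $h_a^{m-1}dh_a$ with coefficients $b_{am}(p)$ and $E$. The singular part, $dh_a dh_a/(h_a-h_a')^2$, expands as $\sum_m m\, h_a^{m-1}h_a'^{-m-1}\,dh_a\,dh_a'$ on the appropriate region. We then substitute $h_+=U$ and $h_-=V=t/U$, so that $dV=-t\,dU/U^2$ turns $V^{-m-1}dV$ into $-t^{-m}U^{m-1}dU$ and cap $-$ terms into cap $+$ terms. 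This is what produces $P_t$. A direct geometric-series check then shows that the $CC$, $NC$, and $NN$ expressions agree on overlaps; $B_G$ supplies the double pole on the diagonal of $N$. The resulting form is holomorphic away from the diagonal, where it has the correct biresidue-free double pole, and it is symmetric because $E$ and $P_t$ are symmetric. I expect this bookkeeping of expansion regions, matching which inequality between $|h_a|$ and $|h'_a|$ holds on each annulus, to be the main obstacle. I would first do it at the level of the single-step identity $B=B^{\mathrm{cap}}+bP_t\,(\text{correction})$ and then iterate.

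Third, we check the normalization. For the transported $A$-cycles, which lie in $C$, the periods of $B^{\mathrm{cap}}$ vanish by construction. Each $b_{am}$ has zero $A$-periods, being a coefficient of $B^{\mathrm{cap}}$, so the correction term in $B_{CC}$ has zero periods as well. In the nonseparating case, the $\gamma$-period of $B_{NC}$ and $B_{NN}$ vanishes because every $\Theta_{a,m}$ with $m\ge1$ is exact on the annulus, $U^{m-1}dU=d(U^m/m)$. Likewise $B_G$ has zero period around the core circle, since there is no $dU/U$ term. Uniqueness of the normalized fundamental bidifferential then identifies the glued form with $B$.

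Finally, setting $t=0$ gives $B^{\mathrm{cap}}(0,s)=B^{\mathrm{back}}_s$, which completes the estimate after pullback to fixed coordinate discs in the complement.
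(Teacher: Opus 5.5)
Your proposal is correct and follows essentially the same route as the paper. Both arguments use the same ingredients: Neumann-series convergence of $(I-EP_t)^{-1}$ from Cauchy bounds on the capped family; matching of the two cap expansions under $h_+h_-=t$; vanishing of the inherited $A$-periods (through the $b_{am}$) and of the neck period (no $dU/U$ term); uniqueness of the normalized bidifferential; and $P_t=O(t)$ together with $B^{\mathrm{cap}}(0,s)=B^{\mathrm{back}}_s$.

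The only difference is presentational. The paper derives the formulas at fixed $q\in C$ by solving the fixed-point equation $f=b^{\mathsf T}+EP_tf$ for the regular cap coefficients, and then obtains $B_{NN}$ by continuing $B_{NC}$ in its second variable. You instead verify the candidate directly on the cap annuli $\rho<|h_\pm|<R$; these are not an overlap of $C$ and $N$, which are disjoint. There, $b(p)$ splits into principal parts plus $\Theta(p)E$, and the identity $U^{m-1}dU=-t^mV^{-m-1}dV$ turns the $CC$ formula into the $NC$ formula. Note that this is the direction that produces $P_t$, with $t^m$ rather than $t^{-m}$.
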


\begin{proof}
The one-node expansions are due to Yamada \cite[Theorems~4 and~6]{Yamada}; the convergent construction follows Grushevsky--Krichever--Norton \cite{GKN} and Hu--Norton \cite[Section~3]{HuNorton}, with the varying capped family treated in Chen--Chen--Fang--Zong \cite[Theorem~3.1]{CCFZ}. We write the construction in the coefficient notation above.

Fix $q$ in the complement $C$. On the capped curve, $B^{\mathrm{cap}}(p,q)$ already has the required double pole at $p=q$ and zero inherited $A$-periods. To obtain $B(p,q)$, we must make the expansions at the two caps agree under $h_+h_-=t$. Since the cap centers are removed when we sew, we may add differentials with poles there.

A regular term at one cap becomes a polar term at the other:
\[
 h_{\bar a}^{m-1}dh_{\bar a}=-t^m h_a^{-m-1}dh_a.
\]
The differential $b_{am}/m$ has principal part $h_a^{-m-1}dh_a$ and zero inherited $A$-periods. Thus the regular term on the left requires the correction $-t^m b_{am}/m$ on the capped curve. The matrix $P_t$ records these corrections. Each correction also changes the regular terms at both caps; those changes are recorded by $E$.

For fixed $q\in C$, let $f(q)=(f_{am}(q))$ be the column whose entry $f_{am}(q)$ is the coefficient of $h_a^{m-1}dh_a(p)$ in the regular part of the corrected differential in $p$ at the center of cap $a$. The total correction on the complement must then be $b(p)P_t f(q)$. The initial regular coefficients are $b(q)^{\mathsf T}$, and the correction contributes $EP_t f(q)$. Therefore
\[
 f(q)=b(q)^{\mathsf T}+EP_t f(q),
 \qquad f(q)=(I-EP_t)^{-1}b(q)^{\mathsf T}.
\]
On the neck, the regular terms from the two ends together give $\Theta(p)f(q)$. We have therefore obtained the first two expressions in~\eqref{sew:plumbing-formulas}.

The inverse means the series $I+EP_t+(EP_t)^2+\cdots$: each term adds the regular coefficients produced by one more correction. Bounds on the Taylor coefficients on smaller cap discs show that this series and the resulting sums over $(a,m)$ converge normally for small $t$, with derivatives on smaller domains. These estimates are uniform in $s$. The two expressions consequently define a differential on the sewn curve. It has the prescribed double pole, zero inherited $A$-periods, and zero neck period, the last because its Laurent expansion has no $dU/U$ term. These are the periods of the adapted $A$-polarization, so uniqueness identifies it with $B(p,q)$.

To obtain the third formula, continue the expression for $B_{NC}(p,q)$ in its second variable from the annulus $\rho<|U(q)|<R$ at the positive end into the neck. Put $\mathsf R=(I-EP_t)^{-1}$ and expand in $U(q)$. The regular part of $b(q)^{\mathsf T}$ gives the terms $\Theta_{+,n}(q)$, with coefficients $\Theta(p)\mathsf R E$. For the principal parts, use $\mathsf R=I+\mathsf R EP_t$. The identity term gives the Gaussian bidifferential $B_G$: for $|U(p)|<|U(q)|$,
\[
 \sum_{m\ge1}mU(p)^{m-1}U(q)^{-m-1}dU(p)\,dU(q)
 =\frac{dU(p)\,dU(q)}{(U(p)-U(q))^2}=B_G(p,q).
\]
In the remaining term, $P_t$ changes each principal part $mU(q)^{-m-1}dU(q)$ into $V(q)^{m-1}dV(q)$. This supplies the terms $\Theta_{-,n}(q)$ with the same coefficients $\Theta(p)\mathsf R E$. Combining the two ends gives
\[
 B_{NN}(p,q)=B_G(p,q)+\Theta(p)\mathsf R E\Theta(q)^{\mathsf T}.
\]
The identity extends from this annulus to the neck by analytic continuation.

Finally, the capped curves form a smooth proper family with fixed inherited $A$-cycles, so $B^{\mathrm{cap}}$ and its cap coefficients are holomorphic in $(t,s)$. Normal convergence gives the same conclusion for the coefficients in all three formulas. Since $B^{\mathrm{cap}}(0,s)=B_s^{\mathrm{back}}$ and $P_t=O(t)$, the correction on the complement is $O(t)$ after pullback to fixed coordinate discs there. The neck formulas involve only the one-forms $\Theta_{a,m}$ with $m\ge1$. This proves~\eqref{sew:kernel-orders}.
\end{proof}

To compare the spectral data as $t$ varies, choose pairwise disjoint coordinate discs $D_\alpha$ around the background ramification points $a_\alpha$, outside the cap discs $|h_\pm|\leq\rho$. Choose a Morse coordinate $\xi_\alpha$ with $x-x(a_\alpha)=\xi_\alpha^2/2$ on each disc, and fix its sign. Extend these coordinates holomorphically with $(t,s)$ around the corresponding ramification sections in the complement. Points with the same value of $\xi_\alpha$ are then identified across fibers. Expressed in these coordinates, $\lambda$ and $B$ depend holomorphically on $(t,s)$, and their differences from the corresponding background forms are $O(t)$.

On the neck, use the spectral functions $X,Y$ of~\eqref{sew:scaled-spectral-functions}. To keep the two neck ramification points at fixed locations, pass to the cover $t=\epsilon^2$ and write $U=\epsilon\zeta$, $V=\epsilon/\zeta$. These points are then at $\zeta=\pm1$. Since both $U$ and $V$ have a factor of $\epsilon$, each one-form $\Theta_{\pm,m}$ has a factor of $\epsilon^m=t^{m/2}$.

\subsubsection{Contour integrals}
\label{subsubsec:contour-integrals}

Fix $t\ne0$ and use the neck $N=N_{t,s}$ defined above, whose interior contains the two colliding ramification points. Let $\phi$ be a holomorphic function on a neighborhood of $N$, so that its Laurent expansion in $U$ converges on $N$. For a meromorphic one-form $\omega$ with no poles on the boundary of $N$, define
\begin{equation}\label{sew:total-cycle}
 \mathcal C_\phi\omega
 =\frac{1}{2\pi i}
   \left(\int_{\mathrm{outer}}-\int_{\mathrm{inner}}\right)
     \phi\omega.
\end{equation}
The outer and inner circles are $|U|=\rho$ and $|U|=|t|/\rho$, respectively, both taken counterclockwise in the $U$-coordinate. Their difference is the positively oriented boundary of the neck. We use the following functions and their differentials:
\begin{equation}\label{sew:neck-modes}
 \begin{aligned}
 \phi_{+,m}&=\frac{U^m}{m},
 & e_{+,m}=d\phi_{+,m}&=U^{m-1}dU,\\
 \phi_{-,m}&=-\frac{V^m}{m},
 & e_{-,m}=d\phi_{-,m}&=-V^{m-1}dV,
 \qquad m\ge1.
 \end{aligned}
\end{equation}

On each background disc $D_\alpha$ with its chosen Morse coordinate $\xi_\alpha$, set
\[
 \phi_{\alpha,m}=\frac{\xi_\alpha^m}{m},
 \qquad e_{\alpha,m}=d\phi_{\alpha,m}
       =\xi_\alpha^{m-1}d\xi_\alpha,\qquad m\ge1.
\]
For a holomorphic function $\phi$ on a neighborhood of $\overline{D_\alpha}$ and a meromorphic one-form $\omega$ with no poles on $\partial D_\alpha$, define
\[
 \mathcal C_\phi\omega
 =\frac{1}{2\pi i}\int_{\partial D_\alpha}\phi\omega,
\]
where the boundary is oriented counterclockwise in $\xi_\alpha$.

The index $I$ specifies a neck end $+$ or $-$, or a background ramification point $\alpha$, together with an integer $m\ge1$. Write $\mathcal C_I=\mathcal C_{\phi_I}$ and $e_I=d\phi_I$. Thus $\phi_I$ is the chosen primitive of $e_I$. We regard $\phi_I$ and $e_I$ as a function and a one-form on the disjoint union $N\sqcup\bigsqcup_\alpha D_\alpha$. The functions $\phi_{\pm,m}$ and one-forms $e_{\pm,m}$ are defined on all of $N$ and set to zero on every $D_\alpha$. The functions $\phi_{\alpha,m}$ and one-forms $e_{\alpha,m}$ are defined on $D_\alpha$ and set to zero on $N$ and on every $D_\beta$ with $\beta\ne\alpha$.

On this disjoint union, define the reference bidifferential by
\[
 B_{\mathrm{ref}}(p,q)=
 \begin{cases}
 B_G(p,q),&p,q\in N,\\
 B_s^{\mathrm{back}}(p,q),&p\in D_\alpha,\ q\in D_\beta,\\
 0,&\text{one point lies in }N\text{ and the other in some }D_\alpha.
 \end{cases}
\]
The second line includes pairs of distinct background discs.

\begin{lemma}\label{sew:cycles}
Integration against $\phi_I$ turns the reference bidifferential into the one-form $e_I=d\phi_I$, and gives zero on every $e_J$. More precisely, for all indices $I,J$ and every point $p$ in the interior of $N$ or in one of the discs $D_\alpha$,
\begin{equation}\label{sew:isotropy}
 \mathcal C_I^{(z)}B_{\mathrm{ref}}(z,p)=e_I(p),
 \qquad \mathcal C_I e_J=0.
\end{equation}
In the first integral, $z$ is the integration variable and $p$ is held fixed.

On a stable Gaussian differential $\omega=\omega^{G_t}_{g,n}$, with $n\ge1$ and $2g-2+n>0$, the neck integrals equal the residues at $0$ and $\infty$ defined in~\eqref{gau:end-moments}:
\[
 \mathcal C_{\phi_{\pm,m}}\omega=\mathcal L_m^\pm\omega,
 \qquad m\ge1.
\]
Each operation acts on one variable of $\omega$, with the other variables held fixed.
\end{lemma}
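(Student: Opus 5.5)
The plan is to prove the three assertions of Lemma~\ref{sew:cycles} separately by direct residue calculations, treating the neck indices and the background indices in turn.

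\emph{Reproducing property.} For a background index $I=(\alpha,m)$ and $p\in D_\beta$ with $\beta\ne\alpha$, or $p\in N$, the integrand $\phi_{\alpha,m}(z)B_{\mathrm{ref}}(z,p)$ is holomorphic on $\overline{D_\alpha}$ in $z$. Indeed, $B^{\mathrm{back}}_s$ has no pole off the diagonal, and $B_{\mathrm{ref}}$ vanishes between $N$ and $D_\alpha$. So the integral is zero, and so is $e_I(p)$. For $p\in D_\alpha$, the only pole inside $\partial D_\alpha$ is the double pole at $z=p$. Its residue gives $d_p\phi_{\alpha,m}(p)=e_{\alpha,m}(p)$, since $\operatorname{Res}_{z=p}\phi(z)B(z,p)=d\phi(p)$ for any bidifferential with biresidue one and no residue.

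For a neck index and $p$ in the interior of $N$, I apply the same reasoning to $B_G$ on the annulus. The function $\phi_{\pm,m}$ is holomorphic on a neighborhood of $N$, because $V=t/U$ is holomorphic there. The boundary difference $\int_{\mathrm{outer}}-\int_{\mathrm{inner}}$ encloses exactly the pole $z=p$, which gives $d\phi_{\pm,m}(p)$. For $p\in D_\alpha$, the integrand is identically zero. Under our conventions, $d\phi_{-,m}=e_{-,m}$ in both cases.

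\emph{Isotropy.} If $I$ and $J$ live on different components of the disjoint union, then $\phi_Ie_J$ vanishes on the contour. On a common background disc, $\phi_{\alpha,m}e_{\alpha,n}=\xi^{m+n-1}d\xi/m$ is holomorphic, so its integral is zero. On the neck, $\phi_Ie_J$ is globally exact on the annulus only after one checks for the $dU/U$ coefficient, and this is the step needing care. I expand each term in $U$ using $V=t/U$. The product $\frac{U^m}{m}U^{n-1}dU$ has no $U^{-1}$ term. Next, $\frac{U^m}{m}\cdot(-V^{n-1}dV)=\frac{t^n}{m}U^{m-n-1}dU$, whose residue term appears only when $m=n$. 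However, the outer and inner circles are homologous in the annulus, so each contributes the same $U^{-1}$ coefficient and the difference cancels. More simply, $\phi_Ie_J$ is holomorphic on a neighborhood of the closed annulus, so $\mathcal C_I$ of it is $(2\pi i)^{-1}$ times the integral over the boundary of a holomorphic form, which is zero by Cauchy--Stokes. The same argument applies to the first claim: there the only contribution comes from the pole at $p$. I will phrase both claims this way, since no mode bookkeeping is needed.

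\emph{Gaussian moments.} Take $\omega=\omega^{G_t}_{g,n}$, stable, as a differential in one variable with the others held fixed in $N$'s interior or elsewhere. Its poles in $w$ are only at $\pm\sqrt t$, which lie inside the annulus. The function $\phi_{+,m}=U^m/m$ is a polynomial, so $\phi_{+,m}\omega$ is meromorphic on $\mathbb P^1$ with poles at $\pm\sqrt t$ and possibly $\infty$. Deforming the outer circle to $\infty$ gives $\frac1{2\pi i}\int_{\mathrm{outer}}=-\operatorname{Res}_\infty$. The inner integral equals $\operatorname{Res}_{w=0}\frac{U^m}{m}\omega=0$, since $\omega$ is holomorphic at $0$. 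Hence $\mathcal C_{\phi_{+,m}}\omega=\mathcal L^+_m\omega$.

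Symmetrically, $\phi_{-,m}=-t^mw^{-m}/m$ has no pole at $\infty$, where $\omega$ is holomorphic. The outer integral is therefore $-\operatorname{Res}_\infty(\phi_{-,m}\omega)=0$, and $-\int_{\mathrm{inner}}$ gives $-\operatorname{Res}_0(\phi_{-,m}\omega)=\operatorname{Res}_0\frac{V^m}{m}\omega=\mathcal L^-_m\omega$.

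The main point to verify is that stable Gaussian differentials are holomorphic at $0$ and $\infty$. This holds because the recursion produces poles only at the ramification points $\pm\sqrt t$.
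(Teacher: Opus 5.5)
Your proposal is correct and follows essentially the same route as the paper: the reproducing identity comes from the residue at the diagonal double pole, or from vanishing by holomorphy and zero extension when $p$ lies elsewhere. The identity $\mathcal C_Ie_J=0$ comes from Cauchy--Stokes on the closed annulus or disc, and the identification with $\mathcal L_m^\pm$ comes from pushing the outer and inner neck circles to $\infty$ and $0$, using that stable Gaussian correlators have poles only at $\pm\sqrt t$. Your preliminary bookkeeping of the $U^{-1}$ coefficient is harmless but unnecessary, since, as you note yourself, any holomorphic one-form on a neighborhood of the closed annulus has zero integral over its oriented boundary.
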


\begin{proof}
Fix $I$ and $p$, and consider the contour integral of $\phi_I(z)B_{\mathrm{ref}}(z,p)$. For $I=(\pm,m)$, we integrate over the boundary of the neck $N$; for $I=(\alpha,m)$, we integrate over $\partial D_\alpha$. Suppose that $p$ lies in this neck or disc. The integrand has no poles there except possibly at $z=p$. In a local coordinate $u$, the singular part of the kernel is $du(z)\,du(p)/(u(z)-u(p))^2$. Its double pole turns the linear term of the Taylor expansion of $\phi_I(z)$ into the residue $d\phi_I(p)$. Thus the residue theorem gives
\[
 \mathcal C_I^{(z)}B_{\mathrm{ref}}(z,p)
 =\operatorname{Res}_{z=p}\phi_I(z)B_{\mathrm{ref}}(z,p)
 =d\phi_I(p)=e_I(p).
\]
If $p$ lies elsewhere, $e_I(p)=0$ by definition, and the integral also vanishes. Indeed, $B_{\mathrm{ref}}$ is zero between the neck and a background disc. Between two distinct background discs, it is holomorphic in $z$, so its weighted boundary integral vanishes.

Next, the one-form $\phi_Ie_J$ is holomorphic on the whole neck or disc, so its integral over the total oriented boundary vanishes. Hence $\mathcal C_Ie_J=0$. On the neck, the outer and inner circle integrals have the same value when both circles are oriented counterclockwise. Their difference in~\eqref{sew:total-cycle} is consequently zero.

Finally, take a stable Gaussian differential and hold all but one of its variables fixed. Its only possible poles in the remaining variable $U$ are the ramification points $U=\pm\sqrt t$, both inside the neck. We may therefore shrink the inner circle toward $0$ and expand the outer circle toward $\infty$ without crossing a pole of $\omega$.

With the weight $\phi_{+,m}=U^m/m$, the integrand is regular at $U=0$, so the inner integral vanishes. The outer circle contributes minus the residue at infinity: a counterclockwise circle in $U$ is clockwise in the local coordinate $1/U$ at infinity. Hence
\[
 \mathcal C_{\phi_{+,m}}\omega
 =-\operatorname{Res}_{U=\infty}\frac{U^m}{m}\omega
 =\mathcal L_m^+\omega.
\]
With the weight $\phi_{-,m}=-V^m/m=-t^mU^{-m}/m$, the integrand is regular at infinity, so the outer integral vanishes. The definition of $\mathcal C_\phi$ subtracts the inner integral. This minus sign cancels the one in $\phi_{-,m}$, giving
\[
 \mathcal C_{\phi_{-,m}}\omega
 =\operatorname{Res}_{U=0}\frac{V^m}{m}\omega
 =\mathcal L_m^-\omega.
\]
\end{proof}

When several integration variables lie in the same neck or coordinate disc, choose nested contours avoiding the diagonal poles: the contour for a later integration lies in the interior of the domain used for the earlier integration.

Let $\mathbf v_I$ be a formal basis vector for each background index $I=(\alpha,m)$ and each neck index $I=(\pm,m)$. For $n\ge1$ and $2h-2+n>0$, define the background tensor first on background indices $I_j=(\alpha_j,m_j)$ by
\begin{equation}\label{sew:background-tensor}
 \widehat{\mathcal B}_{h,n}(I_1,\ldots,I_n;s)
 =\left(\prod_{j=1}^n\mathcal C_{I_j}\right)
   \omega^{\mathrm{back}}_{h,n}.
\end{equation}
Here $\mathcal C_{I_j}$ integrates the $j$th variable around $\partial D_{\alpha_j}$. Set the component to zero if any $I_j$ is a neck index. These components determine $\widehat{\mathcal B}_{h,n}$ as a multilinear form in the basis vectors $\mathbf v_I$.

For neck indices $I_j=(a_j,m_j)$, with $a_j\in\{+,-\}$, apply the contour operators to the Gaussian correlator $\omega^{G_t}_{h,n}$. By Lemma~\ref{sew:cycles} and the equality of the Gaussian residue moments at the two ends, the result depends only on the integers $m_j$. We use the same symbol $\mathcal A_{h,n}$ for the pullback of the Gaussian tensor under $\mathbf v_{\pm,m}\mapsto\mathbf v_m$, so
\[
 \mathcal A_{h,n}(\mathbf v_{a_1,m_1},\ldots,\mathbf v_{a_n,m_n};t)
 =\mathcal A_{h,n}(m_1,\ldots,m_n;t).
\]
Extend it multilinearly, setting its value to zero whenever any argument is a background basis vector.

We now compare the spectral data on the neck and the background discs with their reference values. Use the Morse-coordinate identifications on the background discs and $\lambda=\lambda_{\mathrm{odd}}$ on the neck. Let $\lambda_{\mathrm{ref}}$ equal $\lambda_G$ on $N$ and the spectral one-form of $\mathcal B_s$ on each $D_\alpha$. Set $\delta\lambda=\lambda-\lambda_{\mathrm{ref}}$ and $\delta B=B-B_{\mathrm{ref}}$. Expand these differences in the one-forms $e_I$:
\begin{equation}\label{sew:couplings}
 \delta\lambda=\sum_I d_I(t,s)e_I,\qquad
 \delta B=\sum_{I,J}k_{IJ}(t,s)e_I\otimes e_J,
 \qquad k_{IJ}=k_{JI}.
\end{equation}
We call $d_I$ the source coefficients and $k_{IJ}$ the edge coefficients. Both sums include background and neck indices. The coefficients are obtained by expanding the formulas in Propositions~\ref{sew:local-form} and~\ref{sew:plumbing}, using the Morse-coordinate identifications on the background discs; they are holomorphic in $(t,s)$. The one-form difference is $O(t)$ on each background disc, and the bidifferential difference is $O(t)$ when both variables lie on background discs. Hence
\[
 d_{\alpha,m}=O(t),\qquad
 k_{(\alpha,m),(\beta,n)}=O(t).
\]

On the neck, $\delta\lambda=\lambda_{\mathrm{odd}}-\lambda_G$, and~\eqref{sew:source-primitive} gives its primitive $\Psi_A$. Differentiating that formula gives
\[
 \delta\lambda=d\Psi_A
 =\sum_{m\ge1}a_m\,d\!\left(\frac{U^m-V^m}{m}\right)
 =\sum_{m\ge1}a_m(e_{+,m}+e_{-,m}).
\]
Thus $e_{+,m}$ and $e_{-,m}$ have the same coefficient in~\eqref{sew:couplings}: $d_{+,m}=d_{-,m}=a_m$.

To find $k_{IJ}$, subtract $B_{\mathrm{ref}}$ from the bidifferential in~\eqref{sew:plumbing-formulas} and express the result in the one-forms $e_I$. For the neck factors, substitute $\Theta_{+,m}=e_{+,m}$ and $\Theta_{-,m}=-e_{-,m}$. For example,
\[
 \begin{aligned}
 \Theta_{+,m}\otimes\Theta_{-,n}&=-e_{+,m}\otimes e_{-,n},\\
 \Theta_{-,m}\otimes\Theta_{-,n}&=e_{-,m}\otimes e_{-,n}.
 \end{aligned}
\]
These signs are included in the coefficients $k_{IJ}$.

The primitive series
\begin{equation}\label{sew:coupling-primitives}
 \Psi(p)=\sum_I d_I\phi_I(p),\qquad
 \mathcal K(p,q)=\sum_{I,J}k_{IJ}\phi_I(p)\phi_J(q)
\end{equation}
converge normally on smaller closed neck annuli and background discs, and on products of these domains. The convergence is locally uniform in $(t,s)$ for $t\ne0$, with derivatives on smaller domains, and holds on the chosen integration contours. Indeed, the expansions of $\delta\lambda$ and $\delta B$ in~\eqref{sew:couplings} are convergent Taylor and Laurent series. Their neck expansions have no $dU/U$ term, so termwise integration divides each coefficient by its positive index and gives~\eqref{sew:coupling-primitives}. The resulting holomorphic functions satisfy $d\Psi=\delta\lambda$ and $d_p d_q\mathcal K=\delta B$.

\begin{unnumberedremark}
On the correlators and bidifferentials used here, the contour integrals $\mathcal C_I$ are unchanged when the neck cutting radius $\rho$ and the background disc radii vary through deformations avoiding poles, including diagonal poles in repeated integrations. The coefficients $d_I,k_{IJ}$ are the Taylor and Laurent coefficients of $\delta\lambda$ and $\delta B$ in the fixed one-forms $e_I$. Their values are independent of $\rho$, the cap radius $R$, and the disc radii, and are preserved on smaller coordinate domains.
\end{unnumberedremark}

\section{Graph expansion and factorization}
\label{sec:proof}
\subsection{Relative variation and the free energy}
\label{sec:variation}

Fix $(t,s)$ with $t\ne0$. On each background disc, use the same Morse coordinate $\xi_\alpha$ for the original and reference data. Their spectral $x$-maps then differ only by the additive branch values $x(a)$, which do not enter the recursion. On the neck, both use the spectral $x$-map $U+V$. We can therefore keep these local $x$-maps fixed while varying the one-form and bidifferential:
\[
 \lambda_{z_\lambda}=\lambda_{\mathrm{ref}}+z_\lambda\delta\lambda,
 \qquad B_{z_B}=B_{\mathrm{ref}}+z_B\delta B.
\]
The parameters $z_\lambda,z_B$ are independent. At $(0,0)$ these are the separated background and Gaussian neck data; at $(1,1)$ they are the original local data, with $\lambda=\lambda_{\mathrm{odd}}$ on the neck. Write $\omega_{g,n}(z_\lambda,z_B)$ and $F_g(z_\lambda,z_B)$ for their correlators and free energies. For fixed $t\ne0$, these are analytic near $(0,0)$, and we first work with their formal Taylor series there. On the diagonal $z_\lambda=z_B=z$, abbreviate the data and invariants to $(\lambda_z,B_z)$, $\omega_{g,n}(z)$ and $F_g(z)$.

The primitives $\Psi$ and $\mathcal K$ in~\eqref{sew:coupling-primitives} converge normally on smaller contour domains. For a fixed graph, summing the source and edge indices therefore replaces their factors by $-\Psi$ and $\mathcal K$ inside the multiple contour integral of the vertex correlators. Choose nested contours for variables in the same neck or coordinate disc, avoiding diagonal poles. On these contours the sums converge absolutely, locally uniformly in $(t,s)$ for $t\ne0$, so they may be interchanged with the contour integrations. At any fixed genus, number of ordinary leaves, and powers of $z_\lambda,z_B$, stability bounds the number of vertices and leaves. Only finitely many underlying graphs then occur. These facts give analytic coefficients to the formal graph series used below.

The operators $\mathsf A$ and $\mathsf Q$ integrate one or two additional variables using the contour operators $\mathcal C_I$. More explicitly, for a differential $\eta$ in the displayed variables, set
\[
 \begin{aligned}
 (\mathsf A\eta)(p_1,\ldots,p_n)
 &=\sum_I d_I\mathcal C_I^{(w)}
       \eta(w,p_1,\ldots,p_n),\\
 (\mathsf Q\eta)(p_1,\ldots,p_n)
 &=\sum_{I,J}k_{IJ}\mathcal C_I^{(w)}\mathcal C_J^{(w')}
       \eta(w,w',p_1,\ldots,p_n).
 \end{aligned}
\]
The superscripts specify the integration variables; the points $p_1,\ldots,p_n$ are held fixed. The sums include both background and neck indices, with coefficients $d_I$ and $k_{IJ}$ from~\eqref{sew:couplings}.

Varying $B$ also changes the dependence on each variable already present in a correlator. To describe this contribution, use the chosen primitives $\phi_I$ to set
\[
 \kappa(w,p)=\sum_{I,J}k_{IJ}\phi_I(w)e_J(p),
 \qquad d_w\kappa(w,p)=\delta B(w,p).
\]
Thus $\kappa$ is a function of $w$ and a one-form in $p$. For a meromorphic one-form $\omega$, define
\[
 (\mathsf L\omega)(p)=\sum_a\operatorname{Res}_{w=a}
       \omega(w)\kappa(w,p).
\]
The sum runs over all background ramification points and the two ramification points in the neck. For a correlator with several variables, $\mathsf L_i$ applies this operation to its $i$th variable, keeping the others fixed.

\begin{proposition}[Variational formula]\label{sew:variation}
For $n\ge1$ and $2g-2+n>0$, write $\mathbf p=(p_1,\ldots,p_n)$ and $\mathbf p_P=(p_j)_{j\in P}$. Then
\begin{equation}\label{sew:marked-variation}
 \begin{aligned}
 \partial_{z_\lambda}\omega_{g,n}
 ={}&-\mathsf A\omega_{g,n+1},\\
 \partial_{z_B}\omega_{g,n}
 ={}&\frac12\mathsf Q\omega_{g-1,n+2}(w,w',\mathbf p)
      +\sum_{i=1}^n\mathsf L_i\omega_{g,n}\\
 &+\frac12\mathsf Q\left(
      \sum_{\substack{h+h'=g,\ P\sqcup P'=\{1,\ldots,n\}\\
                 2h-1+|P|>0,\ 2h'-1+|P'|>0}}
         \omega_{h,|P|+1}(w,\mathbf p_P)
         \omega_{h',|P'|+1}(w',\mathbf p_{P'})\right).
 \end{aligned}
\end{equation}
Both identities hold at $(z_\lambda,z_B)$, and $\mathsf Q$ acts on $w,w'$. Negative-genus terms are zero. On the diagonal, $\partial_z\omega_{g,n}(z)$ is the sum of the two right-hand sides.
\end{proposition}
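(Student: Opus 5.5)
We derive both identities from the Eynard--Orantin variational formulas for a one-parameter deformation of the spectral one-form and of the bidifferential, with the local $x$-maps held fixed. The argument is by induction on $2g-2+n$ via the recursion \eqref{eq:recursion-kernel}. The recursion is local at the ramification points, and along both deformation directions the $x$-map and hence the local involutions $\sigma_a$ do not move. Therefore $\partial_{z_\lambda}$ and $\partial_{z_B}$ can be computed by differentiating the kernel $K_a$ and the recursion formula under the residue.

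\textbf{Step 1: the $\lambda$-direction.} Here $\partial_{z_\lambda}$ changes only the denominator $\omega_{0,1}(z)-\sigma_a^*\omega_{0,1}(z)$ of the kernel. The derivative of $\omega_{0,1}=-\lambda$ is $-\delta\lambda=-d\Psi$, and $\Psi=\sum_I d_I\phi_I$ is holomorphic on the contour domains. The classical argument of \cite[Theorem~5.1]{EO2007} (cf.\ \cite{ABDKS}) then gives, for stable types,
\[
 \partial_{z_\lambda}\omega_{g,n}(\mathbf p)
 =-\sum_a\operatorname{Res}_{w=a}\Psi(w)\,\omega_{g,n+1}(w,\mathbf p).
\]
This is the form-cycle duality with the generalized cycle dual to $\delta\lambda$. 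The induction checks that both sides satisfy the same recursion, using the linear and quadratic loop equations at each $a$. The residues at the ramification points are then turned into the contour integrals $\mathcal C_I$. Inside each $D_\alpha$ and inside $N$, the only poles of $\omega_{g,n+1}$ in $w$ lie at the ramification points, as long as the $p_i$ are outside the contours or handled by the nested-contour convention. Moreover, $\phi_I$ is holomorphic there. So $\sum_a\operatorname{Res}_a\Psi\,\omega=\sum_I d_I\mathcal C_I\omega$, which is $\mathsf A\omega_{g,n+1}$, and the interchange of sum and integral is justified by the normal convergence of \eqref{sew:coupling-primitives}.

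\textbf{Step 2: the $B$-direction.} Varying $B$ enters the recursion in two ways. It changes the kernel numerator $\frac12\int_{\sigma_a(z)}^zB(p,\cdot)$, and it changes the $\omega_{0,2}$ factors appearing in the quadratic terms. Since $\delta B=d_wd_{w'}\mathcal K$, the numerator variation is $\frac12\int\delta B(p,\cdot)=\frac12(\kappa(z,p)-\kappa(\sigma_a z,p))$. I would prove \eqref{sew:marked-variation} by induction, as in Eynard's derivation of the $B$-deformation (\cite[Appendix~A]{EynardIntersection}, \cite{ABDKS}), with three sources of terms.

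First, the $\delta B$ inserted between the two variables of a quadratic term $\omega_{g-1,n+1}(z,\sigma_az,\dots)$ or a product splitting resolves, after applying the induction hypothesis, into $\frac12\mathsf Q$ of the connected term $\omega_{g-1,n+2}$ and of the disconnected term. The stability restrictions $2h-1+|P|>0$ exclude exactly the unstable $\omega_{0,1}$ factors; the $\omega_{0,2}$ factors are what $\delta B$ itself replaces.

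Second, the variation of the kernel in the first variable $p_1$, together with the variation of $\omega_{0,2}(z,p_i)$ for $i\ge2$ in the recursion, produces the terms $\mathsf L_i\omega_{g,n}$. Here one uses $\operatorname{Res}_a\omega(w)\kappa(w,p)$ and the antisymmetry of $\omega+\sigma_a^*\omega$.

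Third, the remaining residues are converted to the contours $\mathcal C_I\mathcal C_J$ by the same pole-location argument as in Step 1, with the nested-contour convention for the pair $(w,w')$.

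The diagonal statement follows from the chain rule.

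\textbf{Main obstacle.} The hardest step is the bookkeeping in Step 2. One must show that the variations of the kernel and of the $\omega_{0,2}$ factors reorganize exactly into the three displayed terms, with the factors $\frac12$ correct. The delicate point is that $\kappa$ is not symmetric, and that the diagonal poles of $B$ between $w$ and the marked points $p_i$ must not be picked up by the contours. My first attempt would be to verify the identity at $(0,1)$ and $(0,3)$ directly. I would then run the induction using the symmetric form of the recursion (sum over $\sigma_a$-orbits), which is where the linear loop equation $\omega+\sigma_a^*\omega$ holomorphic is used to discard terms holomorphic at $a$.
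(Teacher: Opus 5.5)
Your route (induction on $2g-2+n$ through the recursion, differentiating the kernel denominator, the kernel numerator and the $\omega_{0,2}$ factors) re-derives formulas that the paper does not reprove. The paper quotes Kazarian's fixed-$x$ formulas from \cite{ABDKS} and reads the three kinds of terms off the Eynard--Orantin graph expansion. Your alternative is viable, but two points in your sketch are wrong or missing, and both sit in the bookkeeping you flag.

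First, $2h-1+|P|>0$ is the stability condition for $\omega_{h,|P|+1}$. So it excludes $\omega_{0,2}(w,\mathbf p_P)$ with $|P|=1$, not only $\omega_{0,1}$. This matters, because $\mathcal C_I^{(w)}B(w,p_j)=e_I(p_j)$ and $k_{IJ}=k_{JI}$. Hence the two excluded terms $\frac12\mathsf Q\bigl[\omega_{0,2}(w,p_j)\,\omega_{g,n}(w',\dots)\bigr]$ and $\frac12\mathsf Q\bigl[\omega_{g,n}(w,\dots)\,\omega_{0,2}(w',p_j)\bigr]$ add up to exactly $\mathsf L_j\omega_{g,n}$. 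The $\mathsf L_j$ terms replace them, and keeping both would double count. In the induction, the $\mathsf L_i$ terms come from a residue exchange, not from the linear loop equation. Write $\omega_{g,n}$, with $w$ in slot $i$, by the recursion with the $z$-contour inside the $w$-contour, and then swap the contours. The only poles crossed are:
\begin{itemize}
\item for $i=1$, the poles $w=z,\sigma_a(z)$ of $\int_{\sigma_a(z)}^zB(w,\cdot)$, which give the numerator variation $\frac12\bigl(\kappa(z,p_1)-\kappa(\sigma_a(z),p_1)\bigr)$;
\item for $i\ge2$, the pole $w=z$ of a factor $B(z,w)$, which gives $\delta B(z,p_i)$.
\end{itemize}

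Second, Step~1 cannot just invoke \cite[Theorem~5.1]{EO2007}. There $\omega_{0,2}$ moves as well, by the same generalized-cycle integral applied to $\omega_{0,3}$; its $(0,2)$ case is the Rauch formula. Here instead $\partial_{z_\lambda}B=0$. Your induction therefore closes only if two things hold:
\begin{itemize}
\item $\mathsf A\omega_{0,3}=0$ at every $(z_\lambda,z_B)$;
\item differentiating the denominator $\omega_{0,1}-\sigma_a^*\omega_{0,1}$ does not raise the pole order of $K_a$ at $a$.
\end{itemize}
Both require $\delta\lambda=\delta y\,dx$ with $\delta y$ holomorphic at each ramification point, i.e.\ $\Psi=\Psi(a)+O(\xi_a^2)$ with no linear term in the Morse coordinate. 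Holomorphy of $\Psi$ on the contour domains, which is all you use, does not give this. It does hold here; on the neck it follows from the factor $(U-V)^2$ in \eqref{sew:numerator}. Then $\mathsf A\omega_{0,3}=0$ follows as in the proof of Lemma~\ref{sew:source-cancellation}, since for any $B$ the polar part of $\omega_{0,3}$ at $a$ is a multiple of $d\xi_a/\xi_a^2$.

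Finally, your base cases should be $(0,3)$ and $(1,1)$; $(0,1)$ is not stable.
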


These are the fixed-$x$ formulas of Kazarian, proved by Alexandrov--Bychkov--Dunin-Barkowski--Kazarian--Shadrin in~\cite[Theorem~A.1, equation~(33) in its proof]{ABDKS}, written in the convention $\omega_{0,1}=-\lambda$. They apply to local spectral data. In the recursion graph expansion of Eynard--Orantin~\cite[Section~4.5]{EO2007}, differentiating the kernel denominator gives the negative source term. Differentiating an internal kernel cuts an edge and gives the connected and disconnected terms; differentiating the dependence on an existing variable gives $\mathsf L_i$.

\begin{proposition}\label{sew:vacuum-variation}
For $g\ge2$, the free energy satisfies
\begin{equation}\label{sew:closed-variation}
 \begin{aligned}
 \partial_{z_\lambda}F_g&=-\mathsf A\omega_{g,1},\\
 \partial_{z_B}F_g&=\frac12\mathsf Q\left(
       \omega_{g-1,2}(w,w')
       +\sum_{h=1}^{g-1}\omega_{h,1}(w)\omega_{g-h,1}(w')\right).
 \end{aligned}
\end{equation}
\end{proposition}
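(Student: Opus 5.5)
Start from the definition $F_g=\frac1{2g-2}\sum_a\operatorname{Res}_{p=a}\Phi_a\,\omega_{g,1}(p)$ in \eqref{eq:closed-convention}, where $\Phi_a$ is a local primitive of $\lambda_{z_\lambda}$ and $\omega_{0,1}=-\lambda$. Instead of differentiating this formula, which would require controlling $\partial\Phi_a$ and the dilaton normalization, use the dilaton equation in reverse. It is known for EO recursion and for the fixed-$x$ local data used here: $(2-2g-n)\omega_{g,n}=\sum_a\operatorname{Res}_{p=a}\Phi_a(p)\,\omega_{g,n+1}(p,\cdot)$, and for $n=0$ this is the definition of $F_g$. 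Equivalently, there is a characterization of $F_g$ by its variations. Whenever the one-form and bidifferential vary with $x$ fixed, $\partial F_g$ is obtained from the $n=1$ variation by inserting one fewer point. The plan is to derive \eqref{sew:closed-variation} from Proposition~\ref{sew:variation} applied to $\omega_{g,1}$, followed by the dilaton equation.

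\textbf{Step 1: $z_\lambda$-derivative.} Differentiate $(2-2g)F_g=-\sum_a\operatorname{Res}\Phi_a\omega_{g,1}$ (sign according to the convention). This gives two terms. The first is $\operatorname{Res}(\partial_{z_\lambda}\Phi_a)\omega_{g,1}$, with $\partial_{z_\lambda}\Phi_a=\Psi$ by \eqref{sew:coupling-primitives}. The second is $\operatorname{Res}\Phi_a\,\partial_{z_\lambda}\omega_{g,1}=-\operatorname{Res}\Phi_a\mathsf A\omega_{g,2}$. Commute the residue with $\mathsf A$; this is legitimate on nested contours, with the contour of $\mathsf A$ outside the residue points. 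Then the dilaton equation turns $\sum_a\operatorname{Res}\Phi_a\omega_{g,2}(p,w)$ into $(1-2g)\omega_{g,1}(w)$. The first term is $\sum_I d_I\sum_a\operatorname{Res}\phi_I\omega_{g,1}$. Because $\omega_{g,1}$ has poles only at ramification points inside the contours, this equals $\mathsf A\omega_{g,1}$ up to orientation sign. Collect the terms: $(2-2g)\partial F_g=(2-2g)(-\mathsf A\omega_{g,1})$, since $(1-2g)+1=2-2g$. Divide by $2-2g$, which is nonzero for $g\geq2$.

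\textbf{Step 2: $z_B$-derivative.} Here $\Phi_a$ does not depend on $z_B$. Therefore only $\operatorname{Res}\Phi_a\,\partial_{z_B}\omega_{g,1}$ appears, and we insert the three terms of \eqref{sew:marked-variation} with $n=1$. The $\mathsf Q$-terms become $\mathsf Q$ applied to $\operatorname{Res}_p\Phi_a\,\omega_{g-1,3}(w,w',p)$ and to products $\omega_{h,|P|+1}\omega_{h',|P'|+1}$ with $p$ in one factor. The dilaton equation applies to each of these. On $\omega_{g-1,3}$ it gives the factor $(3-2g)$. On a product with $p$ in a factor of type $(h,2)$ it gives $(1-2h)\omega_{h,1}$. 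In addition, the product sum contains terms where one factor is $\omega_{0,2}(w,p)=B$. These terms are excluded by the stability condition in \eqref{sew:marked-variation}, and I must check whether they are truly absent or whether they are supplied by the $\mathsf L_1$ term. The $\mathsf L_1$ term gives $\sum_a\operatorname{Res}_p\Phi_a\sum_b\operatorname{Res}_w\omega_{g,1}(w)\kappa(w,p)$. Exchange the residues and use $\sum_a\operatorname{Res}_p\Phi_a\,e_J(p)$. This is a residue of a holomorphic form, but it combines through $d_p\kappa$ with the $\mathsf Q$-terms once integration by parts in $p$ is performed. The expected outcome is that $\mathsf L_1$ contributes exactly the correction needed so that every term carries the common factor $(2-2g)$. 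The factor $(3-2g)$ gets a correction $-1$, and each product pair $(1-2h)+(1-2h')$ gets the correction $+\ldots$, making the total $2-2g$ after the factor $\frac12$ and symmetrization. After this, divide by $2-2g$.

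\textbf{Main obstacle.} The delicate point is Step 2: the bookkeeping of the $\mathsf L_1$ term and its interaction with the dilaton insertion. The dilaton coefficients $(3-2g)$ and $(1-2h)$ do not match $(2-2g)$ by themselves. What I would try first is the fallback: rather than the dilaton equation, use the known homogeneity $F_g(\lambda,B)$ under $\lambda\mapsto c\lambda$ together with the standard EO result $\partial F_g=\int_{\partial}\Lambda\,\omega_{g,1}$ for variations of the form $\delta\omega_{0,1}$, $\delta B$, in the form given in \cite[Theorem~A.1]{ABDKS} or \cite[Section~5]{EO2007}. These statements are already in the fixed-$x$ closed form \eqref{sew:closed-variation}. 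The remaining work is then translation into the contour operators via Lemma~\ref{sew:cycles} and the primitives \eqref{sew:coupling-primitives}, and checking the sign produced by $\omega_{0,1}=-\lambda$.
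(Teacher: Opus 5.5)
\textbf{Step~2 has a genuine gap.} It comes from a miscomputed dilaton factor and an incorrect guess about the $\mathsf L_1$ term.

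\emph{Connected term.} Removing $p$ from $\omega_{g-1,3}(w,w',p)$ by the dilaton equation gives the factor $2(g-1)-2+2=2g-2$, which is $2-2g$ in your sign convention. It is not $3-2g$.

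\emph{Product terms.} With $n=1$, the point $p$ lies either on the $w$-factor or on the $w'$-factor. The first placement gives $(2h-1)\,\omega_{h,1}(w)\omega_{g-h,1}(w')$ and the second gives $(2(g-h)-1)\,\omega_{h,1}(w)\omega_{g-h,1}(w')$. Together they carry $(2h-1)+(2g-2h-1)=2g-2$. So every $\mathsf Q$-term already has exactly the factor that cancels $1/(2g-2)$, and nothing needs correcting.

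\emph{The $\mathsf L_1$ term.} It is simply zero, because
$\mathsf L_1\omega_{g,1}(p)=\sum_{I,J}k_{IJ}\bigl(\sum_b\operatorname{Res}_{w=b}\phi_I(w)\,\omega_{g,1}(w)\bigr)e_J(p)$
is holomorphic in $p$ on the neck and the discs. Hence $\Phi_a\,\mathsf L_1\omega_{g,1}$ has zero residue at every ramification point. You noticed this holomorphy, but then posited that the term ``combines through $d_p\kappa$'' with the $\mathsf Q$-terms to supply corrections of $-1$ and so on. No such mechanism exists or is needed. The $\omega_{0,2}$ factors are genuinely excluded by the stability condition in \eqref{sew:marked-variation}. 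Their effect on existing variables is exactly what $\mathsf L_i$ records, and it disappears after the closing residue. With these corrections your direct route is the paper's proof, and the fallback to a cited closed-form variation formula is unnecessary.

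\textbf{Step~1} follows the paper's argument, but it has a sign slip that your bookkeeping ``$(1-2g)+1=2-2g$'' hides. The dilaton equation compatible with the normalization $(2g-2)F_g=\sum_a\operatorname{Res}_a\Phi_a\omega_{g,1}$ is
$\sum_a\operatorname{Res}_{p=a}\Phi_a(p)\,\omega_{g,n+1}(p,\cdot)=(2g-2+n)\,\omega_{g,n}$,
not $(2-2g-n)\,\omega_{g,n}$. With your sign, differentiating $(2-2g)F_g=-\sum_a\operatorname{Res}\Phi_a\omega_{g,1}$ gives $-2g\,\mathsf A\omega_{g,1}$ on the right, which yields the wrong derivative.

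With the correct sign, two contributions to $(2g-2)\partial_{z_\lambda}F_g$ arise. The variation of the primitive contributes exactly $\mathsf A\omega_{g,1}$. Here $\partial_{z_\lambda}\Phi_a=\Psi+c_a$, and the constant $c_a$ drops because $\omega_{g,1}$ has zero residue at $a$. There is no orientation ambiguity, since the contours $\mathcal C_I$ are positively oriented boundaries and $\omega_{g,1}$ has poles only at the ramification points. The source term contributes $-(2g-1)\,\mathsf A\omega_{g,1}$. Together these give $\partial_{z_\lambda}F_g=-\mathsf A\omega_{g,1}$.
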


\begin{proof}
Differentiate the residue formula~\eqref{eq:closed-convention} with its varying primitive:
\[
 F_g=\frac{1}{2g-2}\sum_a\operatorname{Res}_a\Phi_a\omega_{g,1},
 \qquad d\Phi_a=\lambda_{z_\lambda}\text{ near }a.
\]
Near each $a$, $\partial_{z_\lambda}\Phi_a=\Psi+c_a$ for a constant $c_a$. Since $\omega_{g,1}$ has zero residue at $a$, differentiating the primitives contributes $\mathsf A\omega_{g,1}/(2g-2)$. By the dilaton equation~\cite[Theorem~4.7]{EO2007}, applying the residue formula to the source term in~\eqref{sew:marked-variation} contributes $-(2g-1)\mathsf A\omega_{g,1}/(2g-2)$. Their sum is the first identity in~\eqref{sew:closed-variation}. For the $z_B$-derivative, the connected edge term has dilaton factor $2(g-1)-2+2=2g-2$. In a disconnected term, the variable integrated against $\Phi_a$ can lie on either factor; their dilaton factors sum to $(2h-1)+(2(g-h)-1)=2g-2$. These factors cancel the denominator. The term $\mathsf L_1\omega_{g,1}$ is holomorphic in its output variable, so its product with $\Phi_a$ has zero residue. This proves the second identity.
\end{proof}

\subsection{Integration as a stable graph sum}
\label{sec:graphs}

Equations~\eqref{sew:marked-variation} and~\eqref{sew:closed-variation} determine the Taylor expansion from the separated background and Gaussian data to $(\lambda_z,B_z)$. We first record the cancellation used when integrating the one-form variation.

\begin{lemma}\label{sew:source-cancellation}
Let $\omega_{0,n}(z)$ be the genus-zero correlators for $\lambda_z=\lambda_{\mathrm{ref}}+z\delta\lambda$ with $B=B_{\mathrm{ref}}$ fixed. Starting from a stable correlator, integrating $\ell$ variables with $\mathsf A$ gives zero whenever at most two variables remain. More precisely,
\begin{equation}\label{sew:effective-unstable}
 \mathsf A^\ell\omega_{0,r+\ell}(z)=0,
 \qquad r=0,1,2,\qquad \ell\ge3-r.
\end{equation}
Here each application of $\mathsf A$ acts on a different variable, and $r$ is the number of variables left unintegrated. The identity is a sum over the source indices.
\end{lemma}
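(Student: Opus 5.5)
The plan is to reduce every case of \eqref{sew:effective-unstable} to the single identity $\mathsf A\omega_{0,3}(z)=0$, valid for all $z$. The reduction uses the $z_\lambda$-part of the variational formula, Proposition~\ref{sew:variation}. With $B=B_{\mathrm{ref}}$ fixed, the parameter $z$ plays the role of $z_\lambda$. For every stable $(0,n)$ with $n\ge3$ this gives $\partial_z\omega_{0,n}(z)=-\mathsf A\omega_{0,n+1}(z)$. The coefficients $d_I$ do not depend on $z$, and the contours are fixed. Hence $\mathsf A$ commutes with $\partial_z$, and by induction
\[
 \mathsf A^{\ell}\omega_{0,2+\ell}(z)
 =(-1)^{\ell-1}\partial_z^{\ell-1}\bigl(\mathsf A\omega_{0,3}(z)\bigr),
 \qquad \ell\ge1.
\]
Each application of $\mathsf A$ acts on a new variable, with nested contours as fixed after Lemma~\ref{sew:cycles}. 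Thus the case $r=2$ follows from $\mathsf A\omega_{0,3}\equiv0$. The cases $r=1$ (with $\ell\ge2$) and $r=0$ (with $\ell\ge3$) then follow immediately. In those cases one first performs $\ell+r-2\ge1$ integrations, which leaves a two-variable expression $\mathsf A^{\ell+r-2}\omega_{0,\ell+r}$. This expression already vanishes by the case $r=2$, and the remaining integrations act on zero.

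The key step is the direct computation of $\mathsf A\omega_{0,3}$. I will use the standard residue formula
\[
 \omega_{0,3}(w,p_1,p_2)=\sum_a\operatorname{Res}_{\zeta=a}
 \frac{B(\zeta,w)B(\zeta,p_1)B(\zeta,p_2)}{dx(\zeta)\,dy(\zeta)}.
\]
Here $a$ runs over the ramification points in the neck and the background discs, and $B=B_{\mathrm{ref}}$. I will choose the residue circles around each $a$ inside the contours used for $\mathcal C_I^{(w)}$. Then the $w$-integration can be exchanged with the $\zeta$-residue, and Lemma~\ref{sew:cycles} gives $\mathcal C_I^{(w)}B_{\mathrm{ref}}(\zeta,w)=e_I(\zeta)$. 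Summing against $d_I$ and using \eqref{sew:couplings} turns $\sum_I d_I e_I(\zeta)$ into $\delta\lambda(\zeta)$.

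The local $x$-maps are fixed in the deformation, so $\delta\lambda=\delta y\,dx$. The factor $dx$ therefore cancels, and the integrand becomes
\[
 \frac{\delta y(\zeta)\,B(\zeta,p_1)B(\zeta,p_2)}{dy(\zeta)}.
\]
This form is holomorphic at each simple ramification point $a$, because $dy(a)\ne0$ by condition~\textup{(\ref{cond:noncolliding-ramification})} and by the colliding-point condition in~\textup{(\ref{cond:node-branches})}. Every residue therefore vanishes. One must check that $dy\ne0$ holds throughout the deformation $\lambda_z$. This is true for $z$ in a small neighborhood, and the identity then extends as an identity of analytic functions in $z$, or order by order as formal series.

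I expect the main obstacle to be justifying the exchange of the sum over $I$ with the contour integral and the $\zeta$-residue. The difficulty is that $\omega_{0,3}$ must be evaluated along the deformed family $\lambda_z$, while $dy$ in the denominator is the deformed one. For this I will rely on the normal convergence of $\Psi=\sum_I d_I\phi_I$ on the contour domains, established after \eqref{sew:coupling-primitives}. This gives $\sum_I d_I\,\mathcal C_I^{(w)}B_{\mathrm{ref}}(\zeta,w)=d\Psi(\zeta)=\delta\lambda(\zeta)$ for $\zeta$ inside all the contours. A second point to check is that $\partial_z$ of the unstable data contributes no extra terms in the iteration. This holds because the formula is only applied to stable $(0,n)$ with $n\ge3$, and the base object $\mathsf A\omega_{0,3}$ vanishes identically in $z$.
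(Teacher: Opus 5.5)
Your proposal is correct and follows essentially the paper's proof: the reduction of every case to $\mathsf A\omega_{0,3}(z)=0$ via $\partial_z\omega_{0,n}=-\mathsf A\omega_{0,n+1}$ is the same. Your base case also rests on the same two facts as the paper's, namely that $\delta\lambda=\delta y\,dx$ with $\delta y$ holomorphic and the Eynard--Orantin formula for $\omega_{0,3}$. The difference is only in packaging. You insert the explicit $BBB/(dx\,dy)$ residue and use Lemma~\ref{sew:cycles} to turn the integrated $B_{\mathrm{ref}}$ into $\delta\lambda$. The paper instead writes $\mathsf A\omega_{0,3}=\sum_a\operatorname{Res}_a\Psi\,\omega_{0,3}$, and observes that $\Psi$ has no linear Morse term while the polar part of $\omega_{0,3}$ is a multiple of $d\xi/\xi^2$.
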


\begin{proof}
The function $\Psi=\sum_I d_I\phi_I$ is a holomorphic primitive of $\delta\lambda$ on the neck and on each background disc. At a ramification point $a$, choose a Morse coordinate $\xi$ with $x-x(a)=\xi^2/2$ and write
\[
 \delta\lambda=\delta y(\xi)\xi\,d\xi.
\]
The function $\delta y$ is holomorphic. On the neck, this follows from the factor $(U-V)^2$ in~\eqref{sew:numerator}, since $dx=(U-V)dU/U$. Thus $\Psi=\Psi(a)+O(\xi^2)$ has no linear term in $\xi$.

With the other two variables fixed, the polar part of $\omega_{0,3}(z)$ at $a$ is a multiple of $d\xi/\xi^2$ by~\cite[Theorem~4.1]{EO2007}. Multiplying by $\Psi$ therefore gives zero residue. The contour definition of $\mathsf A$ sums these residues over all ramification points, so $\mathsf A\omega_{0,3}(z)=0$. Applying $\mathsf A$ to further variables gives
\[
 \mathsf A^{3-r}\omega_{0,3}(z)=0,
 \qquad r=0,1,2.
\]

Since $B$ is fixed, Proposition~\ref{sew:variation} reduces to $\partial_z\omega_{0,n}(z)=-\mathsf A\omega_{0,n+1}(z)$ for $n\ge3$. The operator $\mathsf A$ is independent of $z$, so each differentiation adds one variable and one application of $-\mathsf A$. Differentiating the preceding identity $j$ times gives
\[
 0=\partial_z^j\bigl(\mathsf A^{3-r}\omega_{0,3}(z)\bigr)
   =(-1)^j\mathsf A^{j+3-r}\omega_{0,j+3}(z).
\]
Taking $j=\ell+r-3\ge0$ proves~\eqref{sew:effective-unstable}.
\end{proof}

\begin{definition}[Decorated graphs]\label{sew:decorated-graphs}
Fix a finite set $\mathscr R$ and two disjoint index sets
\[
 \mathfrak I_A=\{+,-\}\times\mathbb Z_{>0},
 \qquad \mathfrak I_B=\mathscr R\times\mathbb Z_{>0}.
\]
A connected decorated graph $\Gamma$ consists of the following data:
\begin{itemize}
\item \textbf{Vertices.} A finite nonempty set $V(\Gamma)$. Each vertex $v$ has a genus $h_v\in\mathbb Z_{\ge0}$ and a type $c_v\in\{A,B\}$.
\item \textbf{Half-edges.} A finite set $H(\Gamma)$ of half-edges, also called flags, each incident to one vertex. A half-edge $f$ at $v$ has an index $I_f\in\mathfrak I_{c_v}$. Write $H(v)$ for the incident half-edges and $n_v=|H(v)|$ for the valence.
\item \textbf{Edges.} A set $E(\Gamma)$ of disjoint unordered pairs of half-edges. Each pair forms an edge between the incident vertices. Loops and multiple edges are allowed, and the edges connect all vertices.
\item \textbf{Leaves.} The unpaired half-edges are divided into ordinary leaves $L^o(\Gamma)=\{l_1,\ldots,l_n\}$, numbered from $1$ to $n$, and unnumbered source leaves $S(\Gamma)$.
\end{itemize}
The graph is stable if, at every vertex,
\[
 2h_v-2+n_v>0.
\]
Here $n_v\ge0$ counts edge ends and both kinds of leaves. The genus of $\Gamma$ is
\[
 g(\Gamma)=\sum_vh_v+|E(\Gamma)|-|V(\Gamma)|+1.
\]
Isomorphisms preserve incidence, edge pairings, both kinds of leaves, ordinary-leaf numbers, and all vertex and half-edge decorations. Let $\operatorname{Aut}\Gamma$ be the automorphism group. Write $\mathfrak G_{g,n}$ for the set of isomorphism classes of stable decorated graphs of genus $g$ with $n$ ordinary leaves, and set
\[
 \mathfrak G'_{g,n}
 =\{[\Gamma]\in\mathfrak G_{g,n}:n_v>0
       \text{ for every vertex }v\}.
\]
For graphs with no ordinary leaves, abbreviate
\[
 \mathfrak G_g=\mathfrak G_{g,0},
 \qquad \mathfrak G'_g=\mathfrak G'_{g,0}.
\]
\end{definition}

For the spectral data, take $\mathscr R$ to be the set of background ramification points. Types $A$ and $B$ correspond to the neck and background. Assign the following weights:
\begin{itemize}
\item \textbf{Ordinary leaves.} An ordinary leaf $l_j$ has weight $1$ and a prescribed index $I_{l_j}$. It records the corresponding contour integral of a correlator. In an $n$-point graph sum these $n$ indices are held fixed.
\item \textbf{Source leaves.} A source leaf $f$ has weight $-d_{I_f}(t,s)$, with $d_I$ defined in~\eqref{sew:couplings}.
\item \textbf{Edges.} An edge $\{f,f'\}$ has weight $k_{I_fI_{f'}}(t,s)$, with $k_{IJ}$ defined in~\eqref{sew:couplings}.
\item \textbf{Vertices.} A vertex $v$ with $n_v>0$ has weight
\[
 T_v=
 \begin{cases}
 \mathcal A_{h_v,n_v}\bigl((m_f)_{f\in H(v)};t\bigr),
       &c_v=A,\\[2pt]
 \widehat{\mathcal B}_{h_v,n_v}\bigl((I_f)_{f\in H(v)};s\bigr),
       &c_v=B.
 \end{cases}
\]
At a type-$A$ vertex, write $I_f=(a_f,m_f)$ with $a_f\in\{+,-\}$. Both tensors are symmetric, so no ordering of $H(v)$ is needed. If $n_v=0$, stability gives $h_v\ge2$, and we set
\[
 T_v=
 \begin{cases}
 F_{h_v}(G_t)=C_{h_v}t^{2-2h_v},&c_v=A,\\[2pt]
 F_{h_v}(\mathcal B_s),&c_v=B.
 \end{cases}
\]
\end{itemize}
Each flag $f\in H(v)$ supplies one argument of the vertex correlator, integrated by $\mathcal C_{I_f}$ in the definition of $T_v$. This includes source leaves and both ends of every edge. Thus the source and edge weights multiply tensor components in which these contour integrations have already been performed.

The weight of the decorated graph is the product of these factors:
\[
 w(\Gamma;t,s)=
 \prod_{v\in V(\Gamma)}T_v
 \prod_{f\in S(\Gamma)}(-d_{I_f})
 \prod_{\{f,f'\}\in E(\Gamma)}k_{I_fI_{f'}}.
\]
The vacuum graph sets $\mathfrak G_g$ and $\mathfrak G'_g$ have no ordinary leaves. Summing their decorated isomorphism classes includes all half-edge index assignments.

\begin{theorem}[The connected graph sum for the difference]\label{sew:graph}
For $g\ge2$, the difference of free energies is the following formal series in $z$:
\begin{equation}\label{sew:graph-sum}
 \begin{aligned}
 &F_g(\lambda_z,B_z)-F_g(\mathcal B_s)-F_g(G_t)\\
 &\hspace{1cm}=
 \sum_{[\Gamma]\in\mathfrak G'_g}
 \frac{z^{|S(\Gamma)|+|E(\Gamma)|}}{|\operatorname{Aut}\Gamma|}
 w(\Gamma;t,s).
 \end{aligned}
\end{equation}
\end{theorem}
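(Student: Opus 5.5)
We prove \eqref{sew:graph-sum} by comparing Taylor coefficients in $z$ at $z=0$. Both sides are formal series in $z$. The right-hand side has zero constant term, because every graph in $\mathfrak G'_g$ other than a single vertex carries at least one source or edge. The one-vertex graphs with $n_v=0$ are excluded from $\mathfrak G'_g$. At $z=0$ the data are the separated reference data, so $F_g(0)$ is additive over the neck and the background discs: the reference bidifferential is zero between $N$ and each $D_\alpha$. Hence $F_g(0)=F_g(G_t)+F_g(\mathcal B_s)$, which matches the constant term. It then suffices to show that $\partial_z$ of the graph sum satisfies the same recursion as $\partial_zF_g(z)$. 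On the diagonal, Proposition~\ref{sew:vacuum-variation} gives
\[
 \partial_zF_g=-\mathsf A\omega_{g,1}+\tfrac12\mathsf Q\Bigl(\omega_{g-1,2}+\textstyle\sum_h\omega_{h,1}\omega_{g-h,1}\Bigr).
\]

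The key intermediate statement is a marked analogue, proved by induction on $2g-2+n$ together with the order in $z$. For $n\ge1$, the contour-integrated correlators $\bigl(\prod_j\mathcal C_{I_j}\bigr)\omega_{g,n}(z)$ should equal
\[
 \sum_{\Gamma\in\mathfrak G_{g,n}}\frac{z^{|S|+|E|}}{|\operatorname{Aut}\Gamma|}\,w(\Gamma),
\]
where the ordinary leaves carry the indices $I_j$ and every vertex has $n_v>0$. To establish it, differentiate the graph sum in $z$. Marking one source leaf or one edge of a graph and deleting it is the inverse of the operations in Proposition~\ref{sew:variation}. Deleting a source leaf corresponds to the term $-\mathsf A\omega_{g,n+1}$. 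Cutting an edge gives either a loop-type term $\tfrac12\mathsf Q\omega_{g-1,n+2}$ or a separating term $\tfrac12\mathsf Q(\omega\,\omega)$. The factor $\tfrac12$ and the symmetry $k_{IJ}=k_{JI}$ account for the orbit–stabilizer count $|\operatorname{Aut}\Gamma|$ against the number of marked edges. The closed statement follows in the same way from Proposition~\ref{sew:vacuum-variation}, without any $\mathsf L_i$ terms.

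The main obstacle is the $\mathsf L_i$ term together with stability. The graph sum is built from fixed contour integrals of vertex correlators, while $\omega_{g,n}(z)$ itself moves as $B_z$ varies. We would first show that $\mathcal C_I^{(p)}\mathsf L\omega(p)$ is exactly what is produced by the $z$-dependence of the ordinary-leaf contour integrals after the edges have been resummed. Equivalently, we would work with the contour integrals of $\omega_{g,n}(z)$ and use Lemma~\ref{sew:cycles}. Integrating $\kappa(w,p)$ against $\phi_I(p)$ pairs the edge primitive with the existing leaf, so $\mathsf L_i$ is recorded as a new edge attached to an ordinary leaf that turns into a further ordinary leaf. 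The residues at the ramification points would be traded for boundary contours using the nested-contour convention.

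Unstable vertices arise when integration produces factors of type $(0,1)$ and $(0,2)$. The source-generated unstable pieces are removed by Lemma~\ref{sew:source-cancellation}: any genus-zero vertex carrying sources with at most two remaining half-edges has vanishing total weight. Those pieces therefore never appear in the sum, and this is exactly the stability condition $2h_v-2+n_v>0$. The $(0,2)$ pieces coming from $B$ would be handled by absorbing them into the edge weight $k_{IJ}$: by Lemma~\ref{sew:cycles}, $\mathcal C_I B_{\mathrm{ref}}=e_I$ and $\mathcal C_Ie_J=0$, so no chains of $(0,2)$ vertices survive.

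Finally, vertex types split as $A$ or $B$ because $B_{\mathrm{ref}}$ vanishes between the neck and the background discs. At $z=0$, each connected vertex correlator lives entirely on $N$, giving $\mathcal A_{h,n}$, or entirely on $\bigsqcup_\alpha D_\alpha$, giving $\widehat{\mathcal B}_{h,n}$. The isotropy in Lemma~\ref{sew:cycles} forces a type-$A$ vertex to pick up only neck indices and a type-$B$ vertex only background indices. Taylor coefficients at $z=0$ of all orders are then determined by the graph sum, which proves \eqref{sew:graph-sum} as a formal identity.
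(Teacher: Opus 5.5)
Your skeleton matches the paper's. You compare Taylor coefficients in $z$ through Propositions~\ref{sew:variation} and~\ref{sew:vacuum-variation}, prove the marked identity for contour-integrated correlators, read $-\mathsf A$ as attaching a source leaf and $\tfrac12\mathsf Q$ as attaching an edge, and use Lemma~\ref{sew:source-cancellation} for small genus-zero vertices. Working on the diagonal instead of with separate $z_\lambda,z_B$ is possible, at the cost of the extra point in the second paragraph.

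\textbf{The gap: the $\mathsf L_i$ terms.} The gap is in the step you single out as the main obstacle. Your mechanism for $\mathsf L_i$ cannot work. On the graph side the vertices are fixed reference tensors, and $z$ enters only through $z^{|S(\Gamma)|+|E(\Gamma)|}$. So there is no ``$z$-dependence of the ordinary-leaf contour integrals'' to absorb $\mathcal C_{I_i}\mathsf L_i\omega_{g,n}$. Nor is an edge attached to an ordinary leaf through a bivalent genus-zero vertex a graph of $\mathfrak G'_{g,n}$. These terms have nothing to match, and the identity holds only because they vanish identically. That is the missing observation. In $p$, $\kappa(w,p)=\sum_{J,K}k_{JK}\phi_J(w)e_K(p)$ is a normally convergent combination of the one-forms $e_K$, and $\mathcal C_Ie_K=0$ by Lemma~\ref{sew:cycles}. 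Hence $\mathcal C^{(p_i)}_{I_i}\mathsf L_i\omega_{g,n}=0$. You do quote $\mathcal C_Ie_J=0$, but only for ``$(0,2)$ chains''. There too nothing is absorbed into $k_{IJ}$: a bidifferential integrated in both variables gives $\mathcal C_J\mathcal C_IB_z=0$. Once every variable is contour-integrated, the $B$-variation reduces to the two $\mathsf Q$ terms, which is how the paper argues.

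\textbf{The diagonal route and unstable components.} Cutting a bridge of a graph in $\mathfrak G'_{g,n}$ can split off a genus-zero component with one or two ordinary leaves. An example is an edge ending at a genus-zero vertex that carries two sources. Such components have no counterpart in \eqref{sew:marked-variation}, whose disconnected sum excludes unstable factors, so you must show their source-summed weights vanish. They do. Such a component is a tree of genus-zero vertices, one of which has at most two non-source half-edges, and Lemma~\ref{sew:source-cancellation} kills it after summing over its source indices. This is not ``exactly the stability condition'': $n_v$ counts source leaves, so these vertices lie in $\mathfrak G'_{g,n}$ and only cancel. The paper avoids the issue differently. It first integrates the $z_B$-flow with vertices $\omega_{h,r}(\lambda_{z_\lambda},B_{\mathrm{ref}})$ that are stable in $(h,r)$, then expands sources, and uses the lemma only to enlarge the graph set.

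\textbf{The induction.} Run the induction on the power of $z$, simultaneously for all stable $(g,n)$. Induction on $2g-2+n$ does not close, because the variation of $\omega_{g,n}$ involves $\omega_{g,n+1}$ and $\omega_{g-1,n+2}$.
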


\begin{proof}
We use the independent parameters $z_\lambda,z_B$ and set them equal to $z$ at the end. Write $\omega^{\mathrm{ref}}_{h,n}$ for the correlators of $(\lambda_{\mathrm{ref}},B_{\mathrm{ref}})$. After contour integration, they give the background or Gaussian tensors. The free energy at $(z_\lambda,z_B)=(0,0)$ is $F_g(\mathcal B_s)+F_g(G_t)$. We prove the vacuum formula together with the following identity for $n\ge1$, $2g-2+n>0$, and fixed ordinary-leaf indices $I_1,\ldots,I_n$:
\begin{equation}\label{sew:integrated-graph-sum}
 \left(\prod_{j=1}^n\mathcal C_{I_j}\right)
 \omega_{g,n}(z_\lambda,z_B)
 =\sum_{\substack{[\Gamma]\in\mathfrak G'_{g,n}\\
                         I_{l_j}=I_j\ (1\le j\le n)}}
 \frac{z_\lambda^{|S(\Gamma)|}z_B^{|E(\Gamma)|}}
      {|\operatorname{Aut}\Gamma|}w(\Gamma;t,s).
\end{equation}

First keep $B_{\mathrm{ref}}$ fixed. The source part of~\eqref{sew:marked-variation} gives, for stable $(h,r)$ with $r\ge1$,
\[
 \omega_{h,r}(\lambda_{z_\lambda},B_{\mathrm{ref}})
 =\sum_{\ell\ge0}\frac{(-z_\lambda)^\ell}{\ell!}
       \mathsf A^\ell\omega^{\mathrm{ref}}_{h,r+\ell}.
\]
Since $\mathsf A=\sum_I d_I\mathcal C_I$, every additional argument becomes a source leaf with weight $-z_\lambda d_I$. The factor $1/\ell!$ removes the order in which the source leaves were added. For the vacuum,~\eqref{sew:closed-variation} gives
\[
 F_g(\lambda_{z_\lambda},B_{\mathrm{ref}})
       -F_g(\lambda_{\mathrm{ref}},B_{\mathrm{ref}})
 =\sum_{\ell\ge1}\frac{(-z_\lambda)^\ell}{\ell!}
       \mathsf A^\ell\omega^{\mathrm{ref}}_{g,\ell}.
\]
These are the connected graphs with one vertex, at least one source leaf, and no edges.

Next keep $\lambda_{z_\lambda}$ fixed and vary $B_{z_B}$. Apply the contour operators to all remaining variables in~\eqref{sew:marked-variation}. Every $\mathsf L_i$ term vanishes by $\mathcal C_Ie_J=0$. Lemma~\ref{sew:cycles} also gives $\mathcal C_I B_{z_B}=e_I$ and hence $\mathcal C_J\mathcal C_I B_{z_B}=0$, so a bidifferential integrated in both variables contributes zero. The two terms acted on by $\mathsf Q/2$ join two ordinary leaves into an edge of weight $z_B k_{IJ}$. If the leaves lie on one connected graph, the new edge raises its genus by one. If they lie on two connected graphs, the new edge joins them and adds their genera. Cutting the new edge reverses these operations and gives exactly the connected and product terms in~\eqref{sew:marked-variation}; the case with no ordinary leaves is~\eqref{sew:closed-variation}.

This edge construction determines the Taylor coefficients simultaneously for all stable $(g,n)$. The coefficient of $z_B^{j+1}$ is $1/(j+1)$ times the coefficient of $z_B^j$ on the right-hand side of the variation equation. That coefficient uses only terms already determined at lower powers of $z_B$, starting from the values at $z_B=0$. Both sides of~\eqref{sew:integrated-graph-sum}, and the corresponding vacuum formula, therefore have the same Taylor expansion. The factorial denominators remove the order of edge insertions, and the factor $1/2$ removes the order of the two ends of each edge. Together with the symmetry of the vertex tensors and the source factorials, these give $1/|\operatorname{Aut}\Gamma|$.

Expand each vertex in this edge construction by the one-form Taylor series above. A vertex of genus $h$ then has $r$ ordinary leaves or edge ends and $\ell$ source leaves, hence total valence $r+\ell$. The edge construction starts with stable $(h,r)$. Lemma~\ref{sew:source-cancellation} allows us to include the additional genus-zero vertices with $r\le2$: their sums over source indices vanish. A genus-one vertex with $r=0$ is isolated and has graph genus one. The single-vertex graphs of genus at least two with at least one source leaf were included in the expansion of $F_g(\lambda_{z_\lambda},B_{\mathrm{ref}})$. Thus~\eqref{sew:integrated-graph-sum} sums all of $\mathfrak G'_{g,n}$, and the vacuum formula sums all of $\mathfrak G'_g$, with background and Gaussian tensors at every vertex. Setting $z_\lambda=z_B=z$ proves~\eqref{sew:graph-sum}.
\end{proof}

\begin{corollary}[The full free-energy graph sum]\label{sew:full-graph}
For $g\ge2$, as a formal series in $z$,
\begin{equation}\label{sew:full-graph-sum}
 F_g(\lambda_z,B_z)
 =\sum_{[\Gamma]\in\mathfrak G_g}
 \frac{z^{|S(\Gamma)|+|E(\Gamma)|}}{|\operatorname{Aut}\Gamma|}
 w(\Gamma;t,s).
\end{equation}
\end{corollary}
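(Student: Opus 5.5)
The corollary should follow from Theorem~\ref{sew:graph} once the two vacuum free energies on its left-hand side are recognized as the weights of the graphs excluded from $\mathfrak G'_g$. First I will describe the complement $\mathfrak G_g\setminus\mathfrak G'_g$. Take a connected stable graph $\Gamma$ of genus $g$ with no ordinary leaves that has a vertex $v$ with $n_v=0$. Since $v$ has no half-edges and the edges connect all vertices, $\Gamma$ consists of the single vertex $v$, with no edges and no leaves. Then $g(\Gamma)=h_v$, and stability $2h_v-2>0$ holds automatically because $g\ge2$. So the complement consists of exactly two isomorphism classes: the single genus-$g$ vertex of type $A$ and the single genus-$g$ vertex of type $B$.

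Next I will compute the contributions of these two graphs. Each has $|S|=|E|=0$, so the power of $z$ is $z^0$. Each has trivial automorphism group, since there are no half-edges to permute and the decorations are fixed. By the vertex-weight convention for $n_v=0$, their weights are $T_v=F_g(G_t)=C_gt^{2-2g}$ for type $A$ and $T_v=F_g(\mathcal B_s)$ for type $B$. Their total contribution is therefore $F_g(G_t)+F_g(\mathcal B_s)$.

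Finally, adding this total to both sides of \eqref{sew:graph-sum} gives \eqref{sew:full-graph-sum}, since $\mathfrak G_g=\mathfrak G'_g\sqcup\{\text{the two one-vertex graphs}\}$. For a disconnected normalization, $F_g(\mathcal B_s)$ is the sum over components. In that case I would either keep a single type-$B$ vertex whose weight is this sum, or, if vertices are meant to be labelled by component, split it into one vertex per component. The only point needing care is that the isomorphism-class bookkeeping matches: the two empty graphs must not be identified with each other, since their types differ, and they carry no hidden symmetry factor. Apart from this, the statement is a direct rewriting and I expect no real obstacle.
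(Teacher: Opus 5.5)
Your proposal is correct and matches the paper's proof. A valence-zero vertex in a connected graph is its only vertex, so $\mathfrak G_g\setminus\mathfrak G'_g$ consists of the type-$A$ and type-$B$ one-vertex graphs. These have trivial automorphisms, no $z$-dependence, and weights $C_gt^{2-2g}$ and $F_g(\mathcal B_s)$, and the paper simply adds them to \eqref{sew:graph-sum}.

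For a disconnected normalization, your first option (a single type-$B$ vertex whose weight is the summed free energy) is the convention the paper uses.
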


\begin{proof}
A vertex of valence zero in a connected graph is its only vertex. Thus $\mathfrak G_g\setminus\mathfrak G'_g$ consists of two graphs: a single genus-$g$ vertex of type $A$ or of type $B$. Their weights are $F_g(G_t)=C_gt^{2-2g}$ and $F_g(\mathcal B_s)$, respectively. Both have trivial automorphism group and no sources or edges, so their contributions are independent of $z$. Adding them to~\eqref{sew:graph-sum} proves the formula.
\end{proof}

Grouping connected local graphs into vertex correlators is the operation used in Eynard's intersection formula \cite[Theorem~4.1 and Appendix~A.3]{EynardIntersection}. Here the two vertex theories are the background and Gaussian neck, and the source leaves record the one-form variation.

The coefficient of $z$ comes from three kinds of graphs: a single vertex with one source, a single vertex with one loop, and two vertices joined by one edge. Their contributions are the source term, the connected two-point term, and the product of one-point terms in~\eqref{sew:closed-variation} at $z=0$.

Define the interaction by
\[
 \mathcal I(z;t,s;\hbar)
 =\sum_{g\ge2}\hbar^{2g-2}
   \bigl(F_g(\lambda_z,B_z)-F_g(\mathcal B_s)-F_g(G_t)\bigr).
\]
Set
\[
 Z_{\mathrm{neck}}(z;t,s;\hbar)
 =Z_G(t;\hbar)\exp\mathcal I(z;t,s;\hbar).
\]
Exponentiating the definition of $\mathcal I$ gives the factorization
\begin{equation}\label{sew:partition-factorization}
 Z(\lambda_z,B_z;\hbar)
 =Z_{\mathrm{back}}(s;\hbar)Z_{\mathrm{neck}}(z;t,s;\hbar),
\end{equation}
as a formal identity in $z$ and $\hbar$, where $Z_{\mathrm{back}}(s;\hbar)=Z(\mathcal B_s;\hbar)$. The Gaussian factor $Z_G$ is universal; the neck factor also includes the interaction with the background.

The interaction sums the graphs in $\mathfrak G'_g$. To write its differential-operator expression, let $J_I$ be formal variables for the half-edge indices. Write $J_B=\sum_{I\in\mathfrak I_B}J_I\mathbf v_I$ and $J_A=\sum_{I\in\mathfrak I_A}J_I\mathbf v_I$, put $J=(J_B,J_A)$, and set
\begin{equation}\label{sew:vertex-potential}
 \mathcal V(J)=
 \sum_{\substack{h\ge0,\ n\ge1\\2h-2+n>0}}
 \frac{\hbar^{2h-2+n}}{n!}
 \left(
   \widehat{\mathcal B}_{h,n}(J_B^{\otimes n};s)
   +\mathcal A_{h,n}(J_A^{\otimes n};t)
 \right).
\end{equation}
For $n\ge1$ and $2h-2+n>0$, let $\mathcal V_{h,n}(J)$ be the corresponding summand in~\eqref{sew:vertex-potential}, and let $\omega^{\mathrm{ref}}_{h,n}$ be the correlator of $(\lambda_{\mathrm{ref}},B_{\mathrm{ref}})$. Symmetry of the tensors cancels the factor $1/n!$ under differentiation, so their contour-integral definitions give
\[
 \left.
 \partial_{J_{I_1}}\cdots\partial_{J_{I_n}}
 \mathcal V_{h,n}(J)\right|_{J=0}
 =\hbar^{2h-2+n}
   \left(\prod_{j=1}^n\mathcal C_{I_j}\right)
   \omega^{\mathrm{ref}}_{h,n}.
\]
Both sides vanish when neck and background indices are mixed, since the reference data separate these two parts. Each $\mathcal C_{I_j}$ integrates the $j$th correlator variable, whereas $\partial_{J_{I_j}}$ selects the corresponding tensor argument in the generating function. The coefficients $d_I(t,s)$ and $k_{IJ}(t,s)$ are independent of $J$.

Let $[\cdot]_{\ge2}$ retain the powers $\hbar^{2g-2}$ with $g\ge2$. With the preceding correspondence, the graph sum interaction $\mathcal I$ becomes
\begin{equation}\label{sew:graph-operator}
 \mathcal I(z;t,s;\hbar)=
 \left[
 \log\left(
  \left.
  \exp\left(
       -z\hbar^{-1}\sum_I d_I\partial_{J_I}
       +\frac z2\sum_{I,J}k_{IJ}\partial_{J_I}\partial_{J_J}
      \right)e^{\mathcal V(J)}
  \right|_{J=0}
 \right)
 \right]_{\ge2}.
\end{equation}
The quadratic contraction operator also appears in Givental's quantization~\cite[Proposition~7.3]{Givental}; its relation to topological-recursion graph sums is developed by Dunin-Barkowski--Orantin--Shadrin--Spitz~\cite[Theorem~3.7]{DOSS}. In~\eqref{sew:graph-operator}, expanding $e^{\mathcal V}$ chooses a collection of background and Gaussian vertices. The identities
\[
 \begin{aligned}
 e^{-\mathcal V}\partial_{J_I}e^{\mathcal V}
 &=\partial_{J_I}\mathcal V,\\
 e^{-\mathcal V}\partial_{J_I}\partial_{J_J}e^{\mathcal V}
 &=\partial_{J_I}\partial_{J_J}\mathcal V
   +(\partial_{J_I}\mathcal V)(\partial_{J_J}\mathcal V)
 \end{aligned}
\]
show how sources and edges act. A source selects one tensor argument and multiplies by $-z\hbar^{-1}d_I$. The two edge derivatives select arguments on one vertex or on two different vertices, giving a loop or an edge between them, with coefficient $zk_{IJ}/2$ in the summed operator. In each case the selected arguments remain as indices of the vertex tensors in $w(\Gamma;t,s)$.

Setting $J=0$ requires every half-edge to be used; taking the logarithm retains connected graphs. The factorial denominators in the exponential expansions remove the temporary orderings of vertices, sources, and edges, while the factor $1/n!$ in~\eqref{sew:vertex-potential} removes the ordering of half-edges at each vertex. The factor $1/2$ for each edge removes the order of its ends. For each decorated graph the resulting coefficient is $1/|\operatorname{Aut}\Gamma|$. A graph with $S$ sources and $E$ edges has $\sum_v n_v=2E+S$, so its power of $\hbar$ is
\[
 \sum_v(2h_v-2+n_v)-S
 =2\Bigl(\sum_vh_v+E-|V(\Gamma)|+1\Bigr)-2
 =2g(\Gamma)-2.
\]
All identities in this subsection are formal in $z$.

The $t$-order of a series is its order of vanishing at $t=0$. We next prove positive $t$-order of every interaction graph and finiteness of the graph contributions at each fixed genus and power of $t$. These properties permit coefficientwise specialization at $z=1$. A fixed-genus analytic argument then identifies the resulting series with the original spectral free energies.

\Needspace{6\baselineskip}
\subsection{Positive order and convergence}
\label{sec:completion}

\subsubsection{Positive order of the Gaussian residue moments}
\label{subsubsec:positive-gaussian-order}

\begin{proposition}\label{comp:positivity}
Every stable Gaussian residue moment is divisible by $t$:
\begin{equation}\label{comp:neck-positive}
 \mathcal A_{h,n}(m_1,\ldots,m_n;t)\in t\mathbb Q[t],
 \qquad n>0,\qquad 2h-2+n>0.
\end{equation}
More precisely, a nonzero residue moment is a rational multiple of $t^{D_0-\chi_0}$, where
\[
 D_0=\frac12\sum_{i=1}^n m_i,
 \qquad \chi_0=2h-2+n,
 \qquad D_0-\chi_0\ge1.
\]
\end{proposition}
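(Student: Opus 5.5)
This statement is essentially a repackaging of Proposition~\ref{prop:gaussian-moments}, so the plan is to read it off from the finite pairing formula~\eqref{gau:finite-moments} rather than reprove anything. First, if $\sum_i m_i$ is odd the moment vanishes by that proposition, so it trivially lies in $t\mathbb Q[t]$. For even $\sum_i m_i$, formula~\eqref{gau:finite-moments} writes $\mathcal A_{h,n}(\mathbf m;t)$ as $t^{\frac12\sum_i m_i+2-2h-n}/\prod_i m_i$ times a finite sum of products of the integers $c(m_i,j_i)$ and the pairing counts $N_{h,n}$. The exponent is exactly $D_0-\chi_0$, and it does not depend on the summation indices $j_i$. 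Hence the moment is a rational multiple of the single monomial $t^{D_0-\chi_0}$. Rationality holds because $c(m,j)=(-1)^j\frac{m}{m-j}\binom{m-j}{j}$ is an integer (a Chebyshev coefficient) and, in any case, rational.

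Second, the bound $D_0-\chi_0\ge1$ is needed only when the moment is nonzero. In that case some term with $N_{h,n}(m_1-2j_1,\ldots,m_n-2j_n)\ne0$ occurs. By~\eqref{gau:positive-order}, the exponent then equals $f+\sum_i j_i$, where $f=\#\operatorname{cycles}(\sigma\alpha)$ is the number of faces of a contributing pairing. This identity comes from the genus condition $n-E+f=2-2h$ in~\eqref{gau:pairing-count} together with $2E=\sum_i(m_i-2j_i)$.

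The only point needing care is $f\ge1$. The stability convention forces every $k_i=m_i-2j_i\ge1$, so the $2E$ slots form a nonempty set. A permutation of a nonempty set has at least one cycle, which gives $f\ge1$ and $j_i\ge0$. Hence $D_0-\chi_0\ge1$, and every nonzero moment is a rational multiple of a positive power of $t$, i.e.\ lies in $t\mathbb Q[t]$.

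I expect no real obstacle. The only subtle point is that in the zero case, where the exponent might be nonpositive, there is nothing to prove; this is already remarked at the end of the proof of Proposition~\ref{prop:gaussian-moments}.
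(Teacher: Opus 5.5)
Your proposal is correct and follows the paper's proof exactly: the paper also reads the statement off \eqref{gau:finite-moments}, observing that the common exponent $D_0-\chi_0$ equals $f+\sum_i j_i\ge1$ by \eqref{gau:positive-order}. Your added remarks, that $c(m,j)$ is an integer and that $f\ge1$ because every contributing $k_i\ge1$ makes the slot set nonempty, only spell out what the paper leaves implicit.
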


\begin{proof}
In~\eqref{gau:finite-moments} every nonzero summand has exponent $D_0-\chi_0=f+\sum_i j_i\ge1$, as proved in \eqref{gau:positive-order}.
\end{proof}

\subsubsection{Finiteness at a fixed genus and power of the period}
\label{subsubsec:graph-finiteness}

For a graph $\Gamma\in\mathfrak G'_g$, put $\chi_v=2h_v-2+n_v$ at each vertex. Let $V_A,V_B$ be the sets of neck and background vertices, and let $S_A,S_B$ denote the numbers of sources on them. Let $E_{AA},E_{AB},E_{BB}$ count edges with two neck ends, one end of each type, and two background ends, respectively. Put
\[
 \chi_A=\sum_{v\in V_A}\chi_v,\qquad
 \chi_B=\sum_{v\in V_B}\chi_v,\qquad
 D=\frac12\sum_{v\in V_A}\sum_{f\in H(v)}m_f.
\]
If a term in the product of source and edge coefficients has total $t$-order $R\ge0$ and contributes to $t^k$ in genus $g$, then
\begin{equation}\label{comp:euler-orders}
 k=D-\chi_A+R,
 \qquad \chi_A+\chi_B=2g-2+S_A+S_B.
\end{equation}

\begin{proposition}\label{comp:finiteness}
For fixed $g\ge2$ and $k\ge0$, the coefficient $[\hbar^{2g-2}t^k]\mathcal I(z;t,s;\hbar)$ is a polynomial in $z$, obtained from finitely many decorated graph contributions in~\eqref{sew:graph-sum}. It depends on finitely many background tensor components and finitely many Taylor coefficients in $t$ of the source and edge coefficients, and is holomorphic in $s$.
\end{proposition}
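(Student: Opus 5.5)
The plan is to bound, for fixed $g$ and $k$, every combinatorial quantity in a contributing graph: the number of vertices, sources, edges, and the half-edge indices. I will use the two relations in \eqref{comp:euler-orders} together with three facts. First, every type-$A$ vertex contributes $t$-order $D_v-\chi_v\ge1$ by Proposition~\ref{comp:positivity}. Second, the low neck sources $d_{\pm,1},d_{\pm,2}$ and all background sources and edges $d_{\alpha,m}$, $k_{(\alpha,m),(\beta,n)}$ are $O(t)$ by \eqref{sew:low-modes} and the estimates after \eqref{sew:couplings}. Third, the neck source coefficients obey the weighted bound $[t^r]a_m\ne0\Rightarrow m+2r\ge3$.

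\textbf{Step 1: bounding the neck vertices.} Since $k=\sum_{v\in V_A}(D_v-\chi_v)+R$ and each summand is at least $1$, we get $|V_A|\le k$. We also get $R\le k$ and $D-\chi_A\le k$. It remains to bound $\chi_A$, and hence $D$.

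\textbf{Step 2: bounding the sources.} Sources are the delicate part, because a background vertex could absorb arbitrarily many sources while $\chi_B$ grows with them. The relation $\chi_A+\chi_B=2g-2+S_A+S_B$ alone does not bound anything. Instead I bound $S_B$ through $R$: each background source carries $d_{\alpha,m}=O(t)$, so $S_B\le R\le k$. Similarly each $E_{BB}$ edge carries $O(t)$, so $E_{BB}\le k$. For neck sources I treat the modes separately. Sources with $m\le2$ carry order at least $1$, so there are at most $k$ of them. Sources with $m\ge3$ add at least $3/2$ to $D$ while adding only $1$ to $\chi_A$. So each such source increases $D-\chi_A$ by at least $1/2$. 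A source with $m=1,2$ and $t$-order $r$ has $m/2+r-1\ge1/2$ by the weighted bound.

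To make this precise, I distribute $D-\chi_A+R$ over half-edges. Write $D_v-\chi_v=\sum_{f\in H(v)}(m_f/2-1)-(2h_v-2)$. Each neck source contributes $m/2-1+r\ge1/2$. Ordinary half-edges with $m_f\ge2$ contribute a nonnegative amount. The only negative contributions come from half-edges with $m_f=1$, and from $h_v\ge2$ through $-(2h_v-2)$. This is where I expect the main obstacle, so I will instead bound everything by the genus. Since $g(\Gamma)=g$ is fixed, we have $\sum h_v\le g$ and $|E|-|V|+1\le g$. Stability with $n_v>0$ excludes unstable genus-zero vertices. A genus-zero vertex needs at least three half-edges, and those that are sources are controlled by Steps 1 and 2. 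Combined with $|V_A|\le k$ and the fact that $E_{AB}$ edges end on neck vertices, this gives a finite bound on all counts.

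\textbf{Step 3: bounding the genus-zero background vertices.} A genus-zero background vertex has valence at least $3$, coming from sources ($\le k$ in total), $BB$ edges ($\le k$), and $AB$ edges. The $AB$ edges are bounded because the neck valences are bounded. Indeed, $\chi_A\le 2g-2+S_A+S_B$ and $S_A$ is bounded once the $m\ge3$ neck sources are bounded via $D-\chi_A\le k$. Once the numbers of vertices, edges, and sources are bounded, the genus bound caps the number of loops.

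\textbf{Step 4: bounding the indices.} The neck indices $m_f$ are bounded because $D\le k+\chi_A$. The background indices $(\alpha,m)$ are the second potential obstacle, since nothing in $t$-order directly bounds $m$ on a background leg. Here I will use the homogeneity of the background tensors $\widehat{\mathcal B}_{h,n}$. The correlator $\omega^{\mathrm{back}}_{h,n}$ has pole order at most $6h-6+2n+2$ in each Morse coordinate $\xi_\alpha$. So the contour integral against $\xi^m/m$ vanishes once $m$ exceeds that bound. Hence only finitely many background indices occur at each vertex.

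\textbf{Conclusion.} Finitely many decorated graphs remain. Each contributes a polynomial in $z$ of degree $|S|+|E|$. Each uses finitely many Taylor coefficients of $d_I$ and $k_{IJ}$, since $R\le k$, and finitely many tensor components, all holomorphic in $s$.
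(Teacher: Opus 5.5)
Your Step 1 is correct, and so are the bounds $S_B,E_{BB}\le R\le k$, the bound of at most $k$ neck sources with $m\le2$, and Step 4. The gap is the bound on neck sources with $m\ge3$, which is the real content of the finiteness. As written, Steps 2--3 are circular. You bound $E_{AB}$ by the neck valences, the neck valences by $S_A$ via $\chi_A\le2g-2+S_A+S_B$, and $S_A$ via $D-\chi_A\le k$. But, as you observe yourself, every edge end with $m_f=1$ at a neck vertex contributes $-\tfrac12$ to $D-\chi_A$. So $D-\chi_A\le k$ controls the $m\ge3$ sources only after $E_{AA}$ and $E_{AB}$ are already bounded. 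The genus paragraph does not break the loop, because ``those that are sources are controlled by Steps 1 and 2'' is false for neck sources with $m\ge3$. The neck-side inequalities cannot close the gap by themselves. Add to a neck vertex one index-$3$ source with $[t^0]a_3\ne0$ and one index-$1$ end of a mixed edge, whose coefficient need not vanish at $t=0$. This leaves $D-\chi_A$, $R$ and $|V_A|$ unchanged. Such growth is stopped only by the stability of the background vertices at the other ends of those edges, and only a global count sees that.

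The missing step is to combine the Euler identity $\chi_A+\chi_B=2g-2+S_A+S_B$, which you set aside, with the lower bound on $D$, rather than using them one after the other. Substituting it into $k=D-\chi_A+R$ gives $k=D+R-(2g-2)-S_A-S_B+\chi_B$. Edges no longer enter with a negative sign. Each neck source contributes at least $\tfrac32$ to $D+R$ against $1$ on the right. Each background source or $BB$ edge contributes at least $1$ to $R$, each $AA$ edge at least $1$ to $D$, and each $AB$ edge at least $\tfrac12$ to $D$. Hence $k\ge\tfrac12S_A+E_{AA}+\tfrac12E_{AB}+E_{BB}+\chi_B-(2g-2)$, which gives $S_A\le2k+4g-4$ and bounds the edges and $\chi_B$ at the same time. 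This is the paper's argument. In your notation, write $S_A^{(m\ge3)}$ and $S_A^{(m\le2)}$ for the numbers of neck sources with $m\ge3$ and $m\le2$. Then it amounts to chaining $D\le k+\chi_A\le k+2g-2+S_A+S_B$ with $D\ge\tfrac32S_A^{(m\ge3)}+\tfrac12S_A^{(m\le2)}$.

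If you prefer your genus route, bound the edges first. Put $b_1=|E|-|V|+1\le g$; then $\sum_{v\in V_B}(n_v-2)=2b_1-2+S_B+2|V_A|-2E_{AA}-E_{AB}$. The left side is at least $-g$, because only positive-genus vertices can have $n_v\le2$. This gives $2E_{AA}+E_{AB}\le3g-2+3k$, after which $D-\chi_A\le k$ does bound the $m\ge3$ sources. Once $S_A+S_B$ is bounded, $\chi_v\ge1$ bounds the number of vertices and the genus formula bounds the edges. The rest of your argument then goes through.
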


\begin{proof}
By~\eqref{sew:low-modes}, each neck source contributes at least $3/2$ to $D+R$. A neck--neck edge contributes at least $1$, a mixed edge at least $1/2$, and each background source or background--background edge at least $1$. Therefore
\begin{equation}\label{comp:source-bound}
 \begin{aligned}
 D+R&\ge\tfrac32S_A+E_{AA}+\tfrac12E_{AB}+S_B+E_{BB},\\
 k&\ge\tfrac12S_A-(2g-2)+\chi_B+E_{AA}+\tfrac12E_{AB}+E_{BB}.
 \end{aligned}
\end{equation}
Thus $S_A\le2k+4g-4$. Proposition~\ref{comp:positivity} gives $R\le k$, hence $S_B,E_{BB}\le k$ and $S_A+S_B\le3k+4g-4$. These bounds also hold for graphs consisting entirely of background vertices. Stability gives $\chi_v\ge1$, so \eqref{comp:euler-orders} bounds the number of vertices. The genus identity bounds the vertex genera and the number of edges. There are therefore only finitely many underlying graphs. Moreover,
\begin{equation}\label{comp:mode-bound}
 D\le k+\chi_A\le k+2g-2+S_A+S_B.
\end{equation}
This inequality bounds every positive integer $m_f$ at a neck flag, and $R\le k$ bounds the Taylor orders of the source and edge coefficients. Each background correlator of fixed genus and valence has finite pole order in each variable at finitely many ramification points. Its contour integral against $\xi_\alpha^m/m$ therefore vanishes for all sufficiently large $m$. Thus only finitely many background indices and tensor components contribute. Each graph carries the factor $z^{|S(\Gamma)|+|E(\Gamma)|}$, so the resulting finite sum is polynomial in $z$, with coefficients holomorphic in $s$.
\end{proof}

\subsubsection{The finite part and the neck factor}
\label{subsubsec:finite-part}

Define
\[
 \mathcal I_g(z;t,s)=[\hbar^{2g-2}]\mathcal I(z;t,s;\hbar).
\]
Every graph in $\mathfrak G'_g$ with a neck vertex contributes $O(t)$ by Proposition~\ref{comp:positivity}. A graph in this set with only background vertices contains a source leaf or an edge, whose coefficient is $O(t)$ by \eqref{sew:kernel-orders} and~\eqref{sew:couplings}. All remaining factors have nonnegative order. Proposition~\ref{comp:finiteness} therefore gives
\begin{equation}\label{comp:interaction-positive}
 \mathcal I_g(z;t,s)\in t\mathcal O_s[z][[t]].
\end{equation}
Each coefficient in $t$ is therefore a polynomial in $z$ and can be evaluated at $z=1$. We may now define
\[
 \begin{aligned}
 \mathcal I(t,s;\hbar)&=\mathcal I(1;t,s;\hbar),
 &\mathcal I_g(t,s)&=\mathcal I_g(1;t,s),\\
 I_{g,k}(s)&=[t^k]\mathcal I_g(t,s),
 &Z_{\mathrm{neck}}(t,s;\hbar)&=Z_G(t;\hbar)\exp\mathcal I(t,s;\hbar).
 \end{aligned}
\]
Evaluating~\eqref{comp:interaction-positive} at $z=1$ gives $\mathcal I_g(t,s)\in t\mathcal O_s[[t]]$. We next identify this series with the analytic free energy after subtracting its Gaussian and background terms.

\subsubsection{Holomorphy at the node}
\label{subsubsec:analytic-completion}

\begin{proposition}\label{comp:analytic}
Let $\mathcal S_{t,s}$ be a family satisfying Definition~\ref{def:regular-node}, and let $g\ge2$. After shrinking the parameter domain, the remainder
\[
 F_g(\mathcal S_{t,s})-
 C_gt^{2-2g}-F_g(\mathcal B_s)
\]
extends holomorphically across $t=0$, jointly in $s$. Its Taylor series is $\sum_{k\ge1}I_{g,k}(s)t^k$.
\end{proposition}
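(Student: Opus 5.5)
We have to show two things: that the remainder is holomorphic, and that its Taylor series is the graph series evaluated at $z=1$. The plan is to pass from the formal identity in $z$ of Theorem~\ref{sew:graph} to an analytic identity at fixed $t\ne0$, and then control the dependence on $t$.

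\textbf{Step 1: analyticity in $z$ at fixed $t$.} For fixed $(t,s)$ with $0<|t|$ small, the interpolated data $(\lambda_z,B_z)$ are local spectral data on $N\sqcup\bigsqcup_\alpha D_\alpha$. The ramification points stay at the zeros of the fixed local $x$-maps, and $dy_z\ne0$ there for all $z$ in a neighborhood of $[0,1]$. On the neck, $dy_z$ interpolates between $dy_G$ and $dy_{\mathrm{odd}}$. Both are nonzero at $\zeta=\pm1$, since $M(0,0,s)/(2c)\to1$ makes $\delta y=O(\text{small})$ relative to $y_G$ near the neck ramification points. Hence the recursion residues at these points are holomorphic in $z$ on a disc containing $[0,1]$. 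Therefore $F_g(z)=F_g(\lambda_z,B_z)$ is holomorphic there, and its Taylor series at $z=0$ is the graph sum~\eqref{sew:full-graph-sum}.

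\textbf{Step 2: the value at $z=1$.} At $z=1$ the local data agree with the original data $\mathcal S_{t,s}$. Recursion and the residue formula~\eqref{eq:closed-convention} depend only on germs at ramification points and on $B$, and the subtraction of $y_{\mathrm{ev}}$ is harmless by the remark after Proposition~\ref{sew:local-form}. Thus $F_g(1)=F_g(\mathcal S_{t,s})$. To identify $F_g(1)$ with the graph series at $z=1$, I rescale $z$ in a $t$-dependent way, since the series in $z$ need not converge at $z=1$ uniformly as $t\to0$.

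\textbf{Step 3: the two-variable function.} Consider $G(z,t,s)=F_g(z)-C_gt^{2-2g}-F_g(\mathcal B_s)$. I would show it is holomorphic in $(z,t,s)$ for $|z|<2$, $0<|t|<\epsilon$, and bounded as $t\to0$. On the cover $t=\epsilon^2$ with $U=\epsilon\zeta$, the neck data become $t$ times data on $G_1$ plus corrections. The scaling $F_h(G_t)=t^{2-2h}F_h(G_1)$ shows that the only singular piece is the Gaussian vertex. By Riemann's removable singularity theorem in $t$, or Hartogs applied after bounding $t^{2g-2}G$, I then get a jointly holomorphic extension across $t=0$ with $G(z,0,s)=0$. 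Its Taylor coefficients in $t$ are holomorphic in $z$. By uniqueness they coincide with the polynomials in $z$ of Proposition~\ref{comp:finiteness}, because both have the same double expansion in $(z,t)$ near $z=0$. Evaluating at $z=1$ gives the claim, with $[t^0]=0$ by~\eqref{comp:interaction-positive}.

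\textbf{Main obstacle.} The hard step is the uniform bound in Step~3 as $t\to0$, with $z$ up to $1$. The neck correlators must be shown to have at most the Gaussian pole $t^{2-2g}$ after contour integration, and the mixed contributions must stay bounded. I would first try the rescaled coordinate $\zeta$. There the edge coefficients scale as $t^{(m+n)/2}$ by the $\Theta$ factors, and by~\eqref{sew:low-modes} the neck sources are small. Combined with Proposition~\ref{comp:positivity}, this gives Cauchy estimates on the $\zeta$-annulus with $\rho$ fixed. If direct bounds prove delicate, I would instead appeal to the conifold-gap holomorphy result of \cite{CCFZ}, which gives holomorphy of the remainder directly. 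Then only the Taylor-coefficient identification in Step~3 remains, and it follows from the formal computation by analytic continuation in $z$.
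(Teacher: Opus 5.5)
There is a genuine gap, and it sits at the step you flag as the main obstacle. Your Step~3 needs $G(z,t,s)$ to be bounded as $t\to0$, uniformly for $|z|<2$, before you can use the removable-singularity theorem and the coefficient identification. The proposal never proves this, and neither suggested route supplies it.

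\emph{Scaling alone cannot give boundedness.} In the $\zeta$-coordinate the neck one-form is $\epsilon^{2}(\Lambda_G+z\epsilon R_A)$. The $O(\epsilon)$ corrections multiply the Gaussian scaling $\epsilon^{4-4g}$, so a priori they produce every intermediate pole $\epsilon^{5-4g},\dots,\epsilon^{-1}$. Your assertion that ``the only singular piece is the Gaussian vertex'' is exactly the conifold gap. It does not come from estimates. It comes from the positive $t$-order of the graph weights: the combinatorial vanishing in Proposition~\ref{comp:positivity}, the low-mode condition \eqref{sew:low-modes}, and Lemma~\ref{sew:source-cancellation}.

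\emph{Bounding the graph sum directly at $z=1$ also fails}, because of the nonuniform convergence in $z$ that you yourself point out.

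\emph{The fallback to \cite{CCFZ} does not close the argument either.} As this paper describes it, that result concerns toric mirror curves, not the general families of Definition~\ref{def:regular-node}. Even where it applies, it controls only the actual curve at $z=1$. It says nothing about the interpolated data $(\lambda_z,B_z)$ for $z\ne1$, so it does not give the two-variable extension that your ``analytic continuation in $z$'' relies on.

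The missing idea is that boundedness is not an input at all; the formal graph identity produces it. Here is how the argument goes instead.
\begin{enumerate}
\item What scaling does give easily is a finite pole order that is uniform in $z$. Write the leading odd coefficients at the ramification points as $\mu_a=\mu_a^{(0)}(s)+z\epsilon^{r_a}\nu_a$. These stay bounded away from zero for $|z|<2$, which also supplies the uniform $z$-domain that your Step~1 only asserts. Each recursion kernel then costs at most $\epsilon^{-2}$, so $\epsilon^{4g-2}F_g(z;\epsilon,s)$ is jointly holomorphic.
\item Expand $F_g=\sum_{j\ge2-4g}A_j(z;s)\epsilon^j$, extracting the Laurent coefficients on a fixed circle $|\epsilon|=\delta$.
\item This extraction commutes with Taylor extraction in $z$. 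At each power of $z$ only finitely many graphs occur, and their index sums converge normally on the circle. Hence the $z$-Taylor coefficients of every $A_j$, including those with $j\le0$, are the graph coefficients of Proposition~\ref{comp:finiteness}.
\item Each $A_j$ is holomorphic (in fact polynomial) on $|z|<2$, so it equals the corresponding graph polynomial on that whole disc.
\item By \eqref{comp:interaction-positive}, all nonpositive and all odd Laurent coefficients of the remainder therefore vanish for every $|z|<2$, in particular at $z=1$.
\end{enumerate}
This replaces your Steps~2--3 and makes both the removable-singularity argument and the appeal to \cite{CCFZ} unnecessary.
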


\begin{proof}
On the cover $t=\epsilon^2$, put $U=\epsilon\zeta$ and $V=\epsilon/\zeta$, and use fixed discs near $\zeta=\pm1$. Let $\lambda_{z,A}$ denote the restriction of $\lambda_z$ to the neck. Its scaled Gaussian part is
\[
 \Lambda_G=\epsilon^{-2}\lambda_G
 =\frac12(\zeta-\zeta^{-1})^2\frac{d\zeta}{\zeta},
\]
which is independent of $\epsilon$. The bracket in~\eqref{sew:numerator} is holomorphic in $(U,V,\tau,s)$ and vanishes at $U=V=\tau=0$, because the period normalization gives $c(0,s)=M_0(0,s)/2$. Since $\tau=O(\epsilon^2)$, substituting $U=\epsilon\zeta$, $V=\epsilon/\zeta$ makes this bracket divisible by $\epsilon$. Consequently
\begin{equation}\label{comp:scaled-interpolation}
 \epsilon^{-2}\lambda_{z,A}
 =\Lambda_G+z\epsilon R_A(\zeta;\epsilon,s),
\end{equation}
where $R_A$ is a holomorphic one-form retaining the double zeros at $\zeta=\pm1$. On each background disc, $\lambda_z=\lambda_{\mathrm{ref}}+z\epsilon^2 R_B$ with $R_B$ holomorphic. The normally convergent sewing formulas give bidifferential coefficients holomorphic in $(\epsilon,s)$: the mixed correction is $zO(\epsilon)$ and both diagonal corrections are $zO(\epsilon^2)$.

At a neck ramification point use a fixed Morse coordinate $\xi$ for $\zeta+\zeta^{-1}$, and at a background ramification point use its chosen Morse coordinate. Write the scaled neck one-form, or the unscaled background one-form, as
\[
 \bigl(b_a(z;\epsilon,s)+\mu_a(z;\epsilon,s)\xi
       +O(\xi^2)\bigr)\xi\,d\xi.
\]
The coefficient $\mu_a$ is the leading odd coefficient entering the recursion denominator. Regularity of the Gaussian and background data, together with~\eqref{comp:scaled-interpolation}, gives
\begin{equation}\label{comp:leading-odd-coefficient}
 \mu_a(z;\epsilon,s)
 =\mu_a^{(0)}(s)+z\epsilon^{r_a}\nu_a(\epsilon,s),
 \qquad \mu_a^{(0)}(s)\ne0,
 \qquad
 r_a=\begin{cases}1,&a\text{ in the neck},\\2,&a\text{ in the background}.
 \end{cases}
\end{equation}
All the functions here are holomorphic. After shrinking the $\epsilon$-disc and spectator polydisc, the coefficients $\mu_a$ stay bounded away from zero for complex $|z|<2$.

Let $F_g(z;\epsilon,s)=F_g(\lambda_z,B_z)$ after pullback to this cover. At fixed $(g,n)$, residue recursion uses finitely many Taylor coefficients at the ramification points and inverse powers of $\mu_a$. This follows by induction: lower correlators have finite pole orders, so taking the next residue uses a finite Taylor polynomial. Each of the $2g-2+n$ recursion kernels contributes at most $\epsilon^{-2}$, while the bidifferential coefficients are holomorphic in $\epsilon$. In~\eqref{eq:closed-convention}, choose the local primitive $\Phi_a$ of $\lambda_z$ with $\Phi_a(a)=0$. This leaves $F_g$ unchanged because $\omega_{g,1}$ has zero residue at $a$, and $\Phi_a$ is holomorphic in the parameters. Hence $\epsilon^{4g-2}F_g(z;\epsilon,s)$ is jointly holomorphic for $|z|<2$.

Write its convergent Laurent expansion as
\[
 F_g(z;\epsilon,s)=\sum_{j\ge2-4g}A_j(z;s)\epsilon^j.
\]
Each $A_j$ is polynomial in $z$. Indeed, the input Taylor coefficients are affine in $z$, and for each positive integer $m$ the inverse powers in the recursion expand as
\[
 \mu_a^{-m}
 =(\mu_a^{(0)})^{-m}
 \sum_{q\ge0}\binom{-m}{q}
 \left(\frac{z\epsilon^{r_a}\nu_a}{\mu_a^{(0)}}\right)^q.
\]
Only finitely many $q$ can contribute to a fixed power of $\epsilon$, since $r_a\ge1$. The finite residue calculation therefore gives the asserted polynomial dependence.

We identify these polynomials using the formal graph identity. Fix a sufficiently small circle $|\epsilon|=\delta>0$. Laurent-coefficient extraction on this circle commutes with Taylor-coefficient extraction in $z$ near zero. At each fixed power of $z$, the graph identity has finitely many underlying graphs, and their index sums converge normally on the contour products, locally uniformly along this circle, by~\eqref{sew:coupling-primitives}. We may therefore extract their Laurent coefficients term by term. Proposition~\ref{comp:finiteness} says that at each fixed power of $t=\epsilon^2$ the result is the finite polynomial supplied by the graph sum. Thus every Taylor coefficient at $z=0$ of $A_j(z;s)$ agrees with that polynomial. Since both are polynomials, they agree for all $z$, in particular at $z=1$.

The free energy at $z=0$ is $F_g(\mathcal B_s)+C_g\epsilon^{4-4g}$. By~\eqref{comp:interaction-positive}, the remaining graph coefficients vanish at all nonpositive powers of $\epsilon$, and at every odd power. At $z=1$ the remainder is therefore jointly holomorphic in $(\epsilon,s)$, even in $\epsilon$, and zero at $\epsilon=0$. It is consequently jointly holomorphic in $(t,s)$, with Taylor series $\sum_{k\ge1}I_{g,k}(s)t^k$.
\end{proof}

\subsection{The factorization theorem}
\label{sec:factorization}

Write $\mathbb C\{t,s-s_0\}$ for the ring of power series converging jointly in a neighborhood of $(0,s_0)$.

\begin{theorem}[Normalization and neck factorization]\label{thm:factorization}
For a family satisfying Definition~\ref{def:regular-node},
\begin{equation}\label{eq:main-factorization}
 Z(\mathcal S_{t,s};\hbar)
 =Z(\mathcal B_s;\hbar)Z_G(t;\hbar)
       \exp\mathcal I(t,s;\hbar).
\end{equation}
The interaction $\mathcal I$ is the connected stable graph sum of Theorem~\ref{sew:graph}, evaluated coefficientwise at $z=1$, with positive valence at every vertex. Its vertex weights are the background tensors~\eqref{sew:background-tensor} and the finite Gaussian tensors of Proposition~\ref{prop:gaussian-moments}; its source and edge weights are $-d_I$ and $k_{IJ}$ from~\eqref{sew:couplings}. Each coefficient $[\hbar^{2g-2}t^k]\mathcal I$ is a finite sum over graphs and half-edge index assignments. For every $g\geq2$,
\begin{equation}\label{eq:main-free-energy}
 F_g(\mathcal S_{t,s})
 =C_gt^{2-2g}+F_g(\mathcal B_s)
     +\sum_{k\geq1}I_{g,k}(s)t^k,\qquad
 \sum_{k\geq1}I_{g,k}(s)t^k\in t\mathbb C\{t,s-s_0\}.
\end{equation}
The Taylor series is jointly convergent at fixed genus.
\end{theorem}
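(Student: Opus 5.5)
The theorem is assembled from results already established, so the plan is to chain them in order. First, at $z=1$ the interpolated data $(\lambda_1,B_1)$ are the original local data of $\mathcal S_{t,s}$ on the neck and background discs. On the neck we use $\lambda_{\mathrm{odd}}$ in place of $\lambda$. The discussion after Proposition~\ref{sew:local-form} shows that subtracting $y_{\mathrm{ev}}$ changes neither the stable correlators nor the residues in \eqref{eq:closed-convention}. Since the recursion and the free energy only use germs at the ramification points, we get $F_g(\lambda_1,B_1)=F_g(\mathcal S_{t,s})$ for $t\neq0$. The inputs are $B$ from Proposition~\ref{sew:plumbing}, the Morse-coordinate identifications, and the normal form.

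Second, we invoke Proposition~\ref{comp:analytic}. It states that $F_g(\mathcal S_{t,s})-C_gt^{2-2g}-F_g(\mathcal B_s)$ extends holomorphically across $t=0$, jointly in $s$, with Taylor series $\sum_{k\ge1}I_{g,k}(s)t^k$. Here $I_{g,k}=[t^k]\mathcal I_g(1;t,s)$ is defined through the formal graph sum of Theorem~\ref{sew:graph}. This gives \eqref{eq:main-free-energy}, including membership in $t\mathbb C\{t,s-s_0\}$. The vanishing of the constant term comes from \eqref{comp:interaction-positive}, which in turn rests on Proposition~\ref{comp:positivity} for graphs with a neck vertex and on the $O(t)$ sources and edges for purely background graphs.

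Third, the finiteness claim follows from Proposition~\ref{comp:finiteness}. For fixed $(g,k)$, only finitely many decorated graphs in $\mathfrak G'_g$ and finitely many index assignments contribute, so each $[\hbar^{2g-2}t^k]\mathcal I$ is a finite sum. Every vertex of such a graph has positive valence. Its vertex weights are the background tensors \eqref{sew:background-tensor} and the Gaussian tensors of Proposition~\ref{prop:gaussian-moments}, with source weights $-d_I$ and edge weights $k_{IJ}$. Evaluating these polynomials in $z$ at $z=1$ is legitimate coefficientwise.

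Finally, we multiply \eqref{eq:main-free-energy} by $\hbar^{2g-2}$, sum over $g\ge2$, and exponentiate. Using $F_g(G_t)=C_gt^{2-2g}$ from Proposition~\ref{prop:gaussian-vacuum}, this gives \eqref{eq:main-factorization} as an identity of formal series in $\hbar$. It is the $z=1$ specialization of \eqref{sew:partition-factorization}. For a disconnected normalization, $Z(\mathcal B_s)$ is the product over components, consistent with $B^{\mathrm{back}}$ vanishing between components.

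The only delicate point is the first step: checking that the fixed-$x$ interpolation at $z=1$ really reproduces the global invariants. The concern is the logarithmic branches and the neck primitive choice. The contour operators $\mathcal C_I$ must recover the recursion residues, and changing the primitive $\Phi_a$ by constants must not affect $F_g$. Both are handled by the zero residue of $\omega_{g,1}$ at each ramification point. This is already used inside Proposition~\ref{comp:analytic}, so it only needs to be cited.
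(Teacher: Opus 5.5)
Your proposal is correct and follows the paper's proof: Proposition~\ref{comp:analytic} gives the free-energy expansion with joint convergence, Proposition~\ref{comp:finiteness} supplies the finite graph coefficients, and multiplying by $\hbar^{2g-2}$, summing over $g\ge2$, and exponentiating gives the partition identity. Your explicit check that the $z=1$ interpolated data reproduce $F_g(\mathcal S_{t,s})$ is an identification the paper uses implicitly inside Proposition~\ref{comp:analytic}.
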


\begin{proof}
Proposition~\ref{comp:analytic} gives~\eqref{eq:main-free-energy}, with the finite graph coefficients of Proposition~\ref{comp:finiteness}. Multiplying by $\hbar^{2g-2}$, summing over $g\ge2$, and exponentiating gives~\eqref{eq:main-factorization} as a formal identity in $\hbar$.
\end{proof}

The universal factor $Z_G$ is the isolated Gaussian vacuum. The neck factor $Z_{\mathrm{neck}}=Z_G\exp\mathcal I$ includes the coupling to the background and depends on the germ of the spectral family. The partition identity is formal in $\hbar$; the interaction series converges at each fixed genus $g\ge2$.

\section{Gromov--Witten applications at the conifold}
\label{sec:gw}

We illustrate Theorem~\ref{thm:factorization} with the mirror curves of $K_{\mathbb P^2}$, $K_{\mathbb F_0}$, and $K_{\mathbb F_1}$. Here $K_S$ denotes the total space of the canonical bundle of $S$. These examples give explicit background contributions coupled to the universal relative-$\mathbb P^1$ neck. We specify the families in the toric and framing conventions of the remodeling theorem of Fang--Liu--Zong \cite{FLZ}, apply the spectral factorization, and identify its background and neck factors. Remodeling gives Gromov--Witten factorizations, using the comparison of Guo--Xu--Zhang \cite{GXZ} for the $X_2$ background. We also identify the background contour insertions with closed GW ancestors.

\subsection{The mirror families and spectral factorization}
\label{subsec:gw-curves}

For a smooth preferred outer flag, the mirror equation begins $1+U+V$, and framing $f$ gives spectral coordinates $-\log U-f\log V,-\log V$ \cite[Sections~4.1--4.2 and~4.6]{FLZ}. We use the following three equations:
\begin{align}
 1+U+V+a\frac{U^3}{V}&=0,
 &&K_{\mathbb P^2},\label{eq:gw-p2-flz}\\
 1+U+V+\frac1{c_0^2}\frac{U^2}{V}
       +\frac{r^2}{c_0^2}U^2&=0,
 &&K_{\mathbb F_0},\label{eq:gw-f0-flz}\\
 1+U+V-\frac{q(1-q)}{c_1^2}\frac{U^2}{V}
       -\frac q{c_1^3}\frac{U^3}{V}&=0,
 &&K_{\mathbb F_1}.\label{eq:gw-f1-flz}
\end{align}
Their Newton polygons and the preferred boundary edge between the monomials $1,V$ give the indicated targets and outer flags. To describe a transverse conifold chart, put
\begin{equation}\label{eq:gw-family-parameters}
 a=-\tfrac1{27}+\delta,\qquad
 c_0=-2(1+r)-\delta,\qquad c_1=1+2q+\delta.
\end{equation}
The spectator parameter is $r$ or $q$. The letter $\delta$ denotes the smoothing parameter in all three families.

For local $\mathbb P^2$, set $U=1/Y$, $V=X/Y$ and use framing $f=-2$. For local $\mathbb F_0$, set $U=c_0/(rY)$, $V=X/(rY)$ and again use $f=-2$. The respective equations become
\begin{align}
 XY(1+X+Y)-\tfrac1{27}+\delta&=0,
 \label{eq:gw-p2-family}\\
 X+X^{-1}+r(Y+Y^{-1})-2(1+r)-\delta&=0.
 \label{eq:gw-f0-family}
\end{align}
We take the spectral coordinates
\begin{equation}\label{eq:gw-p2-f0-spectral-coordinates}
 x=2\log X-\log Y,\qquad y=\log Y-\log X.
\end{equation}
They agree with the framed spectral coordinates specified above up to parameter-dependent additive constants. These constants preserve the recursion kernel and the vanishing period. In particular, the spectral $x$-maps are fixed before the degeneration is taken.

For local $\mathbb F_1$, set $U=-c_1XY$, $V=-X^2Y$ and use framing $f=1$. We obtain
\begin{equation}\label{eq:gw-f1-family}
 Y^{-1}-X^2-(1+2q+\delta)X
          +q(1-q)-qXY=0.
\end{equation}
The spectral coordinates are, up to additive constants,
\begin{equation}\label{eq:gw-f1-spectral-coordinates}
 x=-3\log X-2\log Y+\log(1-q),\qquad
 y=-2\log X-\log Y.
\end{equation}
Thus each spectral $x$-map is fixed by an explicit outer flag and framing before the degeneration is taken.

All three framings have nonzero tangent weights at every torus fixed point. In cyclic order around the Newton polygon, the triples are
\begin{equation}\label{eq:gw-original-tangent-weights}
\begin{aligned}
 K_{\mathbb P^2}:&\quad (1,-2,1),\ (-5,3,2),\ (4,-1,-3),\\
 K_{\mathbb F_0}:&\quad (1,-2,1),\ (-3,1,2),\ (-1,2,-1),\ (3,-1,-2),\\
 K_{\mathbb F_1}:&\quad (1,1,-2),\ (3,-2,-1),\ (1,-3,2),\ (-5,2,3).
\end{aligned}
\end{equation}
For each triangle, the weights are ordered by its interior vertex and its two boundary vertices in counterclockwise order. Thus these choices lie in the generic equivariant setting of the remodeling comparison.

The central nodes are $(-1/3,-1/3)$, $(1,1)$, and $(-q,-1/q)$, respectively. We work on spectator neighborhoods satisfying
\begin{equation}\label{eq:gw-regular-spectators}
 r\ne0,-1,-\tfrac14,\qquad
 q(1-q)(1-4q)(3+4q)(1+2q)(1-q+4q^2)\ne0,
\end{equation}
with finite $r,q$. For the GW comparisons, choose small simply connected neighborhoods $D_r\ni r_0$ and $D_q\ni q_0$ satisfying \eqref{eq:gw-regular-spectators}, with $0<|r_0^{-1}|$ and $0<|q_0|$ sufficiently small. We shrink these neighborhoods within the large-radius domains of the background GW germs specified below. Thus the GW statements use
\begin{equation}\label{eq:gw-comparison-domains}
 r\in D_r,\qquad q\in D_q,
\end{equation}
near $r=\infty$ and $q=0$, respectively, where the background K\"ahler parameters are $-1/r$ and $-q(1-q)^3$.

Choose local logarithm branches consistently at the two preimages of each node. On each smooth compactified mirror curve choose the fundamental bidifferential $B^c$ with zero period along the vanishing cycle $A_c$. Fix the orientation of $A_c$ and set
\begin{equation}\label{eq:gw-spectral-period}
 t=\frac1{2\pi i}\int_{A_c}y\,dx.
\end{equation}

\begin{lemma}\label{lem:gw-regular-families}
With this polarization, the three mirror families are regular spectral-node families in the sense of Definition~\ref{def:regular-node}.
\end{lemma}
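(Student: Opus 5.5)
We verify conditions (\ref{cond:node-branches})--(\ref{cond:transverse-period}) of Definition~\ref{def:regular-node} family by family. For each of \eqref{eq:gw-p2-family}, \eqref{eq:gw-f0-family}, \eqref{eq:gw-f1-family}, the parameter $\delta$ plays the role of $q$ and the spectator ($r$ or $q$) is the parameter $s$. Write the equation as $P(X,Y;\delta,s)=0$.

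\textbf{The node and its branches.} First we check that at $\delta=0$ the curve has an ordinary double point at the stated node. This means $P=P_X=P_Y=0$ there, with nondegenerate Hessian. Concretely, the Hessians are $\begin{pmatrix}2/3&1/3\\1/3&2/3\end{pmatrix}$-type for local $\mathbb P^2$ at $(-1/3,-1/3)$ (up to scale), and $\operatorname{diag}(2,2r)$ for local $\mathbb F_0$ at $(1,1)$, so nondegeneracy requires $r\ne0$. For local $\mathbb F_1$ at $(-q,-1/q)$ the Hessian determinant is a polynomial in $q$, whose vanishing is among the excluded factors in \eqref{eq:gw-regular-spectators}. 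We also check that $\partial_\delta P\ne0$ at the node: it equals $1$ or $-1$ or $-X$. This gives transversality of the smoothing and, via the Morse lemma in $(X,Y)$, the local form $\xi\eta=\delta\cdot(\text{unit})$.

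Next we compute the two tangent directions of the node by solving the Hessian quadratic form. In log coordinates, $dx$ and $dy$ are linear forms in $d\log X,d\log Y$. We require that $dx$ not vanish on either tangent line and that the ratios $dy/dx$ on the two lines be distinct. The latter is automatic once $dx\ne0$ on both lines, because $(dx,dy)$ is a coordinate system (its determinant is $1$ for \eqref{eq:gw-p2-f0-spectral-coordinates} and $-1$ for \eqref{eq:gw-f1-spectral-coordinates}). So only $dx\ne0$ needs checking. This is a finite discriminant computation, and its failure locus is again one of the excluded factors; for $\mathbb F_0$ it produces $r\ne-1$. Since $X,Y\ne0$ at the nodes, the logarithm branches extend holomorphically across the node, and the conditions on the branches hold.

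\textbf{Non-colliding ramification points.} The zeros of $dx$ on the compactified curve are the points where $P=0$ and the logarithmic vector field dual to $x$ annihilates $P$. For local $\mathbb P^2$ this is $(X\partial_X+2Y\partial_Y)P=0$ up to normalization. We eliminate to a one-variable polynomial whose discriminant, together with the resultants detecting collision with the node, with the punctures $X,Y\in\{0,\infty\}$, and with zeros of $dy$, is computed symbolically in $(\delta,s)$. At $\delta=0$ this must be nonzero exactly off \eqref{eq:gw-regular-spectators}; for instance, the factors $1+4r$ and $(1-4q)(3+4q)(1-q+4q^2)$ should arise this way. Riemann--Hurwitz confirms the count: genus one with the given number of punctures fixes the number of simple zeros of $dx$, and two of them are the colliding ones. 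Openness then extends them as disjoint sections over a small polydisc. We expect this elimination step to be the main obstacle, because it is the one genuinely case-by-case computation, especially for $\mathbb F_1$. We would try to organise it through the discriminant of $P$ in the $x$-fibre variable.

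\textbf{Polarization and period.} The compactified curves are of genus one, and the node is nonseparating, since the normalization is rational. So the adapted polarization of condition (\ref{cond:period-normalization}) is simply $A_1=A_c$, and $B^c$ as chosen satisfies it. For condition (\ref{cond:transverse-period}), the local normal form of Proposition~\ref{sew:local-form} already gives $t=\tau c$ with $c(0,s)\ne0$, provided the node is of the regular type just verified. It remains only to check that $\tau$ is transverse to $\delta$. This follows because the Weierstrass reduction of $P$ gives $\tau=\delta\cdot(\text{unit})$ when $\partial_\delta P\ne0$ at the node, which was established above. Hence $t(0,s)=0$ and $\partial_\delta t(0,s)\ne0$.
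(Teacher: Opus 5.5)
Your framework is right, and your treatment of conditions (i), (iii) and (iv) matches the paper's:
\begin{itemize}
\item the Hessians at the three nodes (the $\mathbb F_1$ determinant is $q^2(4q-1)$) and the tangent slopes $\eta=(-1\pm i\sqrt3)/2$ and $\pm i/\sqrt r$;
\item the observation that the log-linear map to $(dx,dy)$ has determinant $\pm1$, so distinct slopes follow once $dx\ne0$ on both branches;
\item $\partial_\delta P=1,-1,q$ at the nodes, giving $\tau=\delta\cdot(\text{unit})$;
\item the residue computation $t=M(0,0,s)\tau/2+O(\tau^2)$.
\end{itemize}

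\textbf{The gap: condition (ii) is not verified.} Condition (ii) is most of what the lemma asserts. You describe an elimination and resultant scheme in $(X,Y,\delta,s)$ and say its output ``must be'' nonzero off \eqref{eq:gw-regular-spectators}. Nothing is actually computed, so three things remain unproved: that the non-colliding zeros of $dx$ are simple, that they avoid the node preimages and punctures, and that $dy\ne0$ there. The same goes for your claim that the central normalization is rational. Your polarization step depends on it: a single nonseparating node, no other singularities, and an adapted polarization spanned by $A_c$ alone.

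\textbf{The missing idea.} Parametrize the central normalization by a rational coordinate $w$; this settles everything in a few lines.
\begin{itemize}
\item For $K_{\mathbb P^2}$, take $X=-1/(3w(1-w))$ and $Y=w^2/(3(1-w))$. Then $dx=(5w-4)\,dw/(w(1-w))$ and $dy=3\,dw/w$. The only non-colliding ramification point is $w=4/5$; it is simple, has $dy\ne0$, and avoids $w_\pm=(1\pm i\sqrt3)/2$ and the punctures $0,1,\infty$.
\item For $K_{\mathbb F_0}$ and $K_{\mathbb F_1}$, the numerators of $dx$ are the quadratics $3w^2+(4r-2)w-3r$ and $1-(2+4q)w+5qw^2$. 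Their discriminants are $4(4r+1)(r+1)$ and $4(1-q+4q^2)$.
\item For $\mathbb F_1$, a root of the $dx$ numerator meets $w_\pm$ only if $(1-4q)(3+4q)=0$. The form $dy$ vanishes at such a root only if $(4q-1)(q-1)=0$.
\end{itemize}
These properties are open, so it suffices to check them at $\delta=0$ and propagate the simple zeros by the implicit function theorem. No elimination in $\delta$ is needed. Your Riemann--Hurwitz degree count then usefully complements this: it shows that these sections and the two colliding points exhaust the zeros of $dx$ on nearby fibres.

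\textbf{A slip in your $\mathbb F_0$ example.} The condition $dx\ne0$ on a node branch means $\eta\ne2$ with $\eta^2=-1/r$. This excludes $r=-1/4$, not $r=-1$. The value $r=-1$ is excluded for other reasons. There the central curve $X+X^{-1}=Y+Y^{-1}$ is reducible, and on the normalization the puncture $-r$ collides with the puncture $1$.
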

\begin{proof}
For the first two curves, the tangent slopes $\eta=dY/dX$ at the node are $(-1\pm i\sqrt3)/2$ and $\pm i/\sqrt r$. In the coordinates \eqref{eq:gw-p2-f0-spectral-coordinates}, the spectral slope is $(\eta-1)/(2-\eta)$. The condition $\eta\ne2$ follows from \eqref{eq:gw-regular-spectators}, so both branches have nonzero $dx$ and distinct $dy/dx$.

We use $w$ as a rational coordinate on each central normalization and write $w_\pm$ for the two node preimages. The first normalization and its spectral differentials are
\begin{equation}\label{eq:gw-p2-normalization}
 X=-\frac1{3w(1-w)},\qquad Y=\frac{w^2}{3(1-w)},\qquad
 dx=\frac{5w-4}{w(1-w)}\,dw,\qquad dy=\frac3w\,dw.
\end{equation}
The node preimages are $w_\pm=(1\pm i\sqrt3)/2$. The remaining ramification point $w=4/5$ is simple, $dy$ is nonzero there, and the punctures $0,1,\infty$ are distinct.

The second normalization is
\begin{equation}\label{eq:gw-f0-normalization}
 X=\frac{r+w}{w(1-w)},\qquad
 Y=-\frac{w(r+w)}{r(1-w)},\qquad
 w_\pm=\pm i\sqrt r.
\end{equation}
Direct differentiation gives
\begin{equation}\label{eq:gw-f0-differentials}
 dx=\frac{3w^2+(4r-2)w-3r}{w(1-w)(r+w)}\,dw,
 \qquad dy=\frac2w\,dw.
\end{equation}
The numerator of $dx$ has discriminant $4(4r+1)(r+1)$. Its roots are distinct and avoid the node preimages and the punctures $0,1,-r,\infty$ under \eqref{eq:gw-regular-spectators}. The differential $dy$ is nonzero at both roots.

For the third curve, the central normalization is
\begin{equation}\label{eq:gw-f1-normalization}
 X=\frac{(1-w)(1-qw)}w,\qquad
 Y=\frac{w^2}{1-w},\qquad
 w_\pm=\frac{1\pm\sqrt{1-4q}}{2q}.
\end{equation}
The two points $w_\pm$ map to $(-q,-1/q)$. The Hessian determinant of the defining Laurent polynomial at this point is $q^2(4q-1)$, so the singularity is an ordinary node. The pullbacks of the spectral differentials are
\begin{equation}\label{eq:gw-f1-differentials}
 dx=-\frac{1-(2+4q)w+5qw^2}{w(1-w)(1-qw)}\,dw,
 \qquad
 dy=\frac{1+2q-3qw}{(1-w)(1-qw)}\,dw.
\end{equation}
The numerator of $dx$ has discriminant $4(1-q+4q^2)$. Its roots avoid $w_\pm$ when $(1-4q)(3+4q)\ne0$, and $dy$ does not vanish at them under \eqref{eq:gw-regular-spectators}. The monomial coordinate transformation in \eqref{eq:gw-f1-spectral-coordinates} is invertible, so the node branches have distinct slopes. The punctures $0,1,1/q,\infty$ are distinct. The normalizations are rational, and these are the only singularities of the central compactified curves.

In the three defining equations the derivatives with respect to $\delta$ at the nodes are $1,-1,q$, respectively. Thus the smoothings are transverse. In the local form $v^2=u^2-\tau$, transversality gives $\tau=\delta$ times a holomorphic function nonzero at the origin. The residue calculation leading to \eqref{sew:period-unit} gives $t=M(0,0,s)\tau/2+O(\tau^2)$ with $M(0,0,s)\ne0$. Hence $t=\delta c(\delta,s)$ with $c(0,s)\ne0$, where $s$ denotes the spectator parameter when present. The remaining ramification points and punctures persist regularly. Finally, $B^c$ has the required zero neck period by construction.
\end{proof}

Let $\mathcal S^Y_{t,s}$ denote one of these spectral families and $\mathcal B^Y_s$ its normalized central curve, where $Y=K_{\mathbb P^2},K_{\mathbb F_0},K_{\mathbb F_1}$. Theorem~\ref{thm:factorization} now gives
\begin{equation}\label{eq:gw-spectral-factorization}
 Z(\mathcal S^Y_{t,s};\hbar)
 =Z(\mathcal B^Y_s;\hbar)Z_G(t;\hbar)
       \exp\mathcal I_Y(t,s;\hbar).
\end{equation}
For each $g\ge2$, the coefficient $\mathcal I_{Y,g}$ is a convergent series in $t$ with zero constant term; each Taylor coefficient is the finite graph sum already specified in the theorem. It remains to identify the background and Gaussian theories by mirror symmetry and Hurwitz theory.

\subsection{The normalized backgrounds}
\label{subsec:gw-backgrounds}

For a toric target $M$ with a framed outer brane, write $F_{h,n}^{M,A}$ for its connected genus-$h$ open GW potential with its $n$ boundary components ordered, as defined in \cite[Section~3.13]{FLZ}. Write $F_g^{M,A}$ for its closed potential, including degree zero, and set $Z_M^A=\exp\sum_{g\ge2}g_s^{2g-2}F_g^{M,A}$. The comparisons use their analytic germs near large radius. At fixed closed parameters, continue the open variables from the chosen brane chart to the coordinate discs on which the tensors are used. The spectral notation $\omega_{h,n},F_g,Z$ remains that of Section~\ref{sec:setup}.

The B-model graph sum in \cite[Theorem~6.8]{FLZ} uses the recursion of \cite[equation~(3-3) and Theorem~3.7]{DOSS}, whose kernel has the opposite sign to \eqref{eq:recursion-kernel}. Its stable $n$-point differentials therefore equal $(-1)^n\omega_{h,n}$. The closing denominator $2-2g$ in \cite[Definition~6.10]{FLZ} then gives the same closed free energy $F_g$. Remodeling \cite[Theorems~7.5--7.6]{FLZ} therefore gives
\begin{equation}\label{eq:gw-remodeling-signs}
 \omega_{h,n}=(-1)^{h-1}d_1\cdots d_nF_{h,n}^{M,A},
 \qquad F_g=(-1)^{g-1}F_g^{M,A}.
\end{equation}
Here $d_j$ denotes the exterior derivative in the $j$th open variable. The first identity is for $n>0$ and $2h-2+n>0$, after the open and closed mirror maps; the second is for $g\ge2$. Closed free energies use the connected graph sum with no ordinary external leaves. The disk and annulus use their separate unstable definitions.

For all three backgrounds, use the normalization coordinate $w$ from Lemma~\ref{lem:gw-regular-families} and write $U_{\mathrm b},V_{\mathrm b}$ for the background coordinates. With the framing $f$ specified in each case, set
\begin{equation}\label{eq:gw-background-coordinates}
 \chi=U_{\mathrm b}V_{\mathrm b}^{f},\qquad
 x_{\mathrm b}=-\log\chi,\qquad y_{\mathrm b}=-\log V_{\mathrm b}.
\end{equation}
The disk branch approaches $(U_{\mathrm b},V_{\mathrm b})=(0,1)$ as $\chi\to0$, and its disk form is $\log V_{\mathrm b}\,d\log\chi$. Write $\Theta_\chi=\chi\,\partial/\partial\chi$ for differentiation with respect to $\log\chi$, with the closed parameters fixed.

\subsubsection*{Local $\mathbb P^2$}
On the normalization \eqref{eq:gw-p2-normalization}, set
\begin{equation}\label{eq:gw-p2-background}
 U_{\mathrm b}=1-w,\qquad V_{\mathrm b}=w,\qquad
 U_{\mathrm b}+V_{\mathrm b}=1.
\end{equation}
This is the mirror of $\mathbb C^3$. Its preferred outer flag has coordinates $U=-U_{\mathrm b},V=-V_{\mathrm b}$, framing $f=4$, and ordered weights $(1,4,-5)$. Thus
\begin{equation}\label{eq:gw-p2-states}
 \chi=U_{\mathrm b}V_{\mathrm b}^4=w^4(1-w),\qquad
 \chi(w_\pm)=-1.
\end{equation}
The framed open coordinate is $\widehat X=UV^4=-\chi$; we make the same sign change in the A-model open parameter. The disk branch is $U_{\mathrm b}=O(\chi)$, and its disk potential has the normalization
\begin{equation}\label{eq:gw-disk-normalization}
 \chi=U_{\mathrm b}(1-U_{\mathrm b})^4,\qquad
 \Theta_\chi F_{0,1}^{\mathbb C^3,A}=\log V_{\mathrm b},\qquad
 F_{0,1}^{\mathbb C^3,A}=-\chi-\tfrac94\chi^2+O(\chi^3).
\end{equation}
The inherited spectral coordinates satisfy
\begin{equation}\label{eq:gw-p2-scale}
 \begin{gathered}
 x=x_{\mathrm b}-\log3,\qquad y=-3y_{\mathrm b}+\log(-1),\\
 \lambda=-3\log V_{\mathrm b}\,d\log\chi+\log(-1)\,dx.
 \end{gathered}
\end{equation}
Choose compatible local logarithm branches in this identity. The signed scale is $-3$. The last term in \eqref{eq:gw-p2-scale} is a constant multiple of $dx$ and does not affect the recursion kernel or the closing residues. Stable $n$-point tensors therefore carry $(-3)^{2-2h-n}$ and the closed potential carries $3^{2-2g}$.

\subsubsection*{Local $\mathbb F_0$}
On the normalization \eqref{eq:gw-f0-normalization}, set
\begin{equation}\label{eq:gw-f0-chart}
 \begin{gathered}
 U_{\mathrm b}=\frac{r(1-w)}{r+w},\qquad V_{\mathrm b}=w,\qquad Q'_R=-1/r,\\
 U_{\mathrm b}+V_{\mathrm b}-Q'_R U_{\mathrm b}V_{\mathrm b}=1.
 \end{gathered}
\end{equation}
This is the mirror of
\begin{equation}\label{eq:gw-resolved-conifold}
 R=\operatorname{Tot}_{\mathbb P^1}(\mathcal O(-1)\oplus\mathcal O(-1)).
\end{equation}
Choose the outer leg $(U_{\mathrm b},V_{\mathrm b})=(0,1)$, with coordinates $U=-U_{\mathrm b},V=-V_{\mathrm b}$, framing $f=3$, and ordered weights $(1,3,-4)$. The framed coordinate and the values at the node preimages are
\begin{equation}\label{eq:gw-f0-states}
 \begin{gathered}
 \chi=U_{\mathrm b}V_{\mathrm b}^3=\frac{rw^3(1-w)}{r+w},\\
 U_{\mathrm b}(w_\pm)=\mp i\sqrt r,\qquad
 V_{\mathrm b}(w_\pm)=\pm i\sqrt r,\qquad \chi(w_\pm)=-r^2.
 \end{gathered}
\end{equation}
Here the framed open coordinate $UV^3$ equals $\chi$. The disk branch has $U_{\mathrm b}=O(\chi)$, $V_{\mathrm b}=1+O(\chi)$ and $\Theta_\chi F_{0,1}^{R,A}=\log V_{\mathrm b}$. The inherited spectral coordinates and one-form are
\begin{equation}\label{eq:gw-f0-scale}
 \begin{gathered}
 x=x_{\mathrm b}+\log(-r^2),\qquad y=-2y_{\mathrm b}-\log(-r),\\
 \lambda=-2\log V_{\mathrm b}\,d\log\chi-\log(-r)\,dx.
 \end{gathered}
\end{equation}
The signed scale is $-2$, so stable tensors carry $(-2)^{2-2h-n}$ and the closed potential carries $2^{2-2g}$. The extra constant multiple of $dx$ has no effect on recursion or the closing residues. Remodeling applies to these backgrounds, including their closed vacua.

\subsubsection*{Local $\mathbb F_1$}
On the smooth normalization \eqref{eq:gw-f1-normalization}, the background coordinate functions are
\begin{equation}\label{eq:gw-f1-background-parametrization}
 U_{\mathrm b}=\frac{w(1-qw)}{1-q},\qquad
 V_{\mathrm b}=(1-w)(1-qw)^2,\qquad Q=-q(1-q)^3.
\end{equation}
Their image satisfies the plane equation
\begin{equation}\label{eq:gw-f1-background}
 V_{\mathrm b}^2+\bigl((1-q)(1+2q)U_{\mathrm b}-1\bigr)V_{\mathrm b}
       +Q U_{\mathrm b}^2(1-U_{\mathrm b})=0.
\end{equation}
The background is the smooth punctured curve $\mathbb P^1_w\setminus\{0,1,1/q,\infty\}$. The map $w\mapsto(U_{\mathrm b}(w),V_{\mathrm b}(w))$ identifies the two distinct points $w_\pm$, producing the node $(1/(1-q),-q)$ in its plane image \eqref{eq:gw-f1-background}. The spectral data are defined on the smooth curve with these two points separate. At $q=0$, the disk branch reduces to $U_{\mathrm b}+V_{\mathrm b}=1$. With framing $f=1$, the chosen spectral coordinates pull back to
\begin{equation}\label{eq:gw-f1-projection}
 \begin{gathered}
 \chi=U_{\mathrm b}V_{\mathrm b}=\frac{w(1-w)(1-qw)^3}{1-q},\\
 x=x_{\mathrm b},\qquad y=y_{\mathrm b},\qquad
 \chi(w_\pm)=-\frac q{1-q}.
 \end{gathered}
\end{equation}
These are the spectral data of the background
\begin{equation}\label{eq:gw-f1-target}
 X_2=\operatorname{Tot}_{\mathbb P^1}(\mathcal O(1)\oplus\mathcal O(-3)),
 \qquad Q=-q(1-q)^3.
\end{equation}
Define its closed equivariant theory by localization, including $d=0$:
\begin{equation}\label{eq:gw-f1-definition}
 F_g^{X_2,A}(Q)=\sum_{d\ge0}Q^d
 \int_{[\overline{\mathcal M}_{g,0}(\mathbb P^1,d)]^{\mathrm{vir}}}
 e_T\!\left(-R^\bullet\pi_*f^*
 [\mathcal O(1)\oplus\mathcal O(-3)]\right).
\end{equation}
The weights, ordered as base, $\mathcal O(-3)$, $\mathcal O(1)$, are $(-2,1,1)$ at one fixed point and $(2,-5,3)$ at the other. Choose the $\mathcal O(-3)$ outer leg at the first point, with ordered flag weights $(1,1,-2)$. Its open theory is defined by formal-relative GW theory \cite{LLLZ}, or equivalently by open localization with framed disk factors \cite{FangLiu}; simplify the rational equivariant expressions before restricting the weights. The disk branch is
\begin{equation}\label{eq:gw-f1-states}
 \Theta_\chi F_{0,1}^{X_2,A}=\log V_{\mathrm b},\qquad
 U_{\mathrm b}=O(\chi),\qquad w=O(\chi).
\end{equation}
The sign of the open coordinate is fixed by this formula. The inherited data in \eqref{eq:gw-f1-projection} have scale $1$.

Mari\~no \cite{MarinoLocalCurves} proposed the spectral description of $X_2$, and Eynard \cite[Sections~3.6.1 and~4.4.2]{EynardPartitions} proved the closed correspondence. Guo--Xu--Zhang \cite[Theorems~2.3 and~2.7]{GXZ} established the open and closed comparison, which takes the following form in our conventions.

\begin{proposition}[Remodeling for the $X_2$ background]\label{gw:f1}
The equivariant open and closed $X_2$ theories just specified satisfy remodeling for the spectral data \eqref{eq:gw-f1-projection}, as identities of analytic germs near large radius, with closed mirror map \eqref{eq:gw-f1-target} and disk branch \eqref{eq:gw-f1-states}. In particular, the stable open and closed identities are \eqref{eq:gw-remodeling-signs}, with the corresponding unstable disk and annulus identities.
\end{proposition}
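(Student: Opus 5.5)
The plan is to reduce Proposition~\ref{gw:f1} to the published comparison of Guo--Xu--Zhang \cite[Theorems~2.3 and~2.7]{GXZ}, building on Eynard's closed correspondence \cite{EynardPartitions}, and to spend the effort on matching conventions. First I would write out the mirror curve of $X_2$ used in \cite{GXZ}, with its closed mirror map and framed open coordinate. Then I would find an explicit monomial change of variables, together with a rescaling of the open coordinate, that carries that curve to the plane equation \eqref{eq:gw-f1-background}. Under this change I would check three things: the Kähler parameter becomes $Q=-q(1-q)^3$; the framing-one coordinate $\chi=U_{\mathrm b}V_{\mathrm b}$ in \eqref{eq:gw-f1-projection} is their framed open variable, up to the sign fixed by \eqref{eq:gw-f1-states}; and $y_{\mathrm b}=-\log V_{\mathrm b}$ matches their $y$, up to an additive constant or constant multiple of $dx$. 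The parametrization \eqref{eq:gw-f1-background-parametrization} makes these checks direct rational computations in $w$. I would also confirm the equivariant data: the ordered flag weights $(1,1,-2)$ at the $\mathcal O(-3)$ leg, and the simplification of rational expressions before restricting the weights. These must agree with the weight specialization under which \cite{GXZ} state their result, and the definition \eqref{eq:gw-f1-definition} must agree with theirs, including degree zero.

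Second, I would transport the recursion conventions. The kernel and closing sign of \cite{GXZ} should be compared with \eqref{eq:recursion-kernel} and \eqref{eq:closed-convention}, exactly as was done for \cite{DOSS,FLZ} before \eqref{eq:gw-remodeling-signs}. An opposite kernel sign multiplies stable $n$-point differentials by $(-1)^n$. Combined with the identification $\hbar=ig_s$, this should produce the factors $(-1)^{h-1}$ and $(-1)^{g-1}$ of \eqref{eq:gw-remodeling-signs}. The disk and annulus identities would be checked separately. For the disk, the normalization $\Theta_\chi F_{0,1}^{X_2,A}=\log V_{\mathrm b}$ is built into \eqref{eq:gw-f1-states}. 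For the annulus, one needs the bidifferential on the genus-zero curve $\mathbb P^1_w$, which is canonical, so no polarization issue arises.

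Third, I would address analytic germs. The statements in \cite{GXZ} are formal in $Q$ and the open variable. Both sides are convergent near $Q=0,\chi=0$: the B-model side because the spectral data depend holomorphically on $q$ near $0$ and the residues at the simple ramification points of \eqref{eq:gw-f1-differentials} vary holomorphically. Equality of power series therefore upgrades to equality of germs, and this then transfers to the coordinate discs by continuation at fixed $q$.

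I expect the main obstacle to be the first step. The precise dictionary between the coordinates, framing and open-sign of \cite{GXZ} (or the Mariño curve) and the chart \eqref{eq:gw-f1-background-parametrization} is easy to get wrong by a sign or a framing shift. Nontrivial framing in particular is not always covered, and $(1,1,-2)$ is a specific weight choice. If \cite{GXZ} treat only a canonical framing, I would first try a framing-change argument on both sides. On the A-model side, the framing dependence of the formal-relative vertex \cite{LLLZ} is explicit. On the B-model side, topological recursion is invariant under the corresponding symplectic transformation $x\mapsto x+fy$ only in the stable open sector up to known corrections, so I would instead verify the framing-$1$ identity directly. That direct check would compare the open localization graph sum of \cite{FangLiu} with the B-model graph sum, following \cite[Theorem~6.8]{FLZ}, for this two-fixed-point geometry.
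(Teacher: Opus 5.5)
Your overall route is the paper's: reduce to \cite[Theorems~2.3 and~2.7]{GXZ}, write a coordinate dictionary, transfer the sign conventions, and pass to analytic germs. The gap is at the step you yourself flag as the main obstacle.

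The comparison of \cite{GXZ} is formulated for $\mathcal O_{\mathbb P^1}(p-1)\oplus\mathcal O_{\mathbb P^1}(-p-1)$ with a framing parameter $f$. The paper reaches $X_2$ by exchanging the two summands and taking $p=-2$, $f=-2$ in their formulas. It then extends their comparison coefficientwise to these values by rationality in $(p,f)$, after checking that their nonvanishing condition takes the value $30$. Your proposal never locates $X_2$ in their family and has no such extension argument. Neither of your fallbacks can replace it:
\begin{itemize}
\item A change of framing is not a symmetry of the open recursion invariants, as you essentially concede.
\item A direct comparison of the localization sum of \cite{FangLiu} with the B-model graph sum ``following \cite[Theorem~6.8]{FLZ}'' would amount to a new proof of remodeling outside the semi-projective setting of \cite{FLZ}. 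The fan of $X_2$ has non-convex support, which is why the paper relies on \cite{GXZ} here.
\end{itemize}

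The sign bookkeeping is also misplaced. Write $x_*,y_*,\mathbf Q,\widehat X_*,X_*$ for the coordinates of \cite{GXZ}.
\begin{itemize}
\item \textbf{Flag weights.} They do not agree with those of \cite{GXZ}, contrary to your requirement. There the order is $(1,-2,1)$, and reversing it to $(1,1,-2)$ multiplies a winding-$m$ disk factor by $(-1)^{m-1}$. Together with $X_*=-\chi$, this yields the factor $(-1)^n$ for $n$ boundaries, on the A-model side.
\item \textbf{B-model dictionary.} It is $x_*=-x+\log(1-q)$, $y_*=y$, $\mathbf Q=-Q$, $\widehat X_*=(1-q)\chi$. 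The reversal of $x$ compensates the opposite kernel and closing-residue signs of \cite{GXZ}. So the kernel sign does not produce a $(-1)^n$ here, unlike the \cite{FLZ} case you are imitating.
\item \textbf{Coupling constant.} The substitution $\hbar=ig_s$ cannot produce the signs in \eqref{eq:gw-remodeling-signs}, which involve no coupling constant; it enters only later, in \eqref{eq:gw-phase}.
\item \textbf{Analyticity at $Q=0$.} Your justification is not right as stated. As $q\to0$, the puncture $1/q$ and one zero of $1-(2+4q)w+5qw^2$ both run off to $w=\infty$. So the background is not a holomorphic family in $w$ across $q=0$. The analyticity of the spectral expressions near large radius must account for this degeneration rather than follow from naive holomorphic dependence on $q$.
\end{itemize}
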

\begin{proof}
Exchange the line-bundle summands and use $p=-2$, $f=-2$ in the formulas of \cite[Sections~2.2--2.4 and~3.3]{GXZ}. Their comparison extends coefficientwise to these parameters by rationality in $p,f$, and their nonvanishing condition has value $30$. Writing $x_*,y_*,\mathbf Q,\widehat X_*,X_*$ for their coordinates, we obtain
\begin{equation}\label{eq:gw-x2-comparison-coordinates}
 \begin{gathered}
 x_*=-x+\log(1-q),\qquad y_*=y,\qquad \mathbf Q=-Q,\\
 \widehat X_*=(1-q)\chi,\qquad X_*=-\chi.
 \end{gathered}
\end{equation}
Reversing the normal-weight order from $(1,-2,1)$ to $(1,1,-2)$ changes a winding-$m$ disk factor by $(-1)^{m-1}$; together with $X_*=-\chi$, this gives $(-1)^n$ for $n$ boundaries. The change $x\mapsto-x$ compensates for their opposite recursion-kernel and closing-residue signs, giving \eqref{eq:gw-remodeling-signs} and the disk and annulus normalizations. The spectral expressions are analytic near large radius, so these identities hold as analytic germs.
\end{proof}

For $g\ge2$, put
\begin{equation}\label{eq:gw-constants}
 A_g=\frac{(-1)^g|B_{2g}B_{2g-2}|}{2(2g)(2g-2)(2g-2)!},
 \qquad a_g=\frac{|B_{2g}|}{2g(2g-2)!}.
\end{equation}
The closed background potentials are
\begin{equation}\label{eq:gw-background-vacua}
 F_g^{\mathbb C^3,A}=A_g,\qquad
 F_g^{R,A}(Q)=2A_g+a_g\operatorname{Li}_{3-2g}(Q),\qquad g\ge2.
\end{equation}
The degree-zero terms follow from the Hodge integral in \cite[Theorem~4]{FP}, and the resolved-conifold multiple-cover formula is \cite[Theorem~3]{FP}. Since $\operatorname{Li}_{3-2g}(Q^{-1})=\operatorname{Li}_{3-2g}(Q)$, its closed background may be written at $Q=-r$; the open chart remains at $Q'_R=-1/r$.

\subsection{The universal neck as Hurwitz theory}
\label{subsec:gw-neck}

The covers of $\mathbb P^1$ associated with Gaussian topological recursion are branched only over $0,1,\infty$, with ramification index $2$ at every point over $1$ \cite[Sections~3--4, Theorem~4.3]{DMSS}. Their Gaussian matrix-model interpretation is given in \cite[Section~2.3]{dMKR}. We express this correspondence in the residue basis \eqref{gau:end-moments}.

Let $H^\circ_{h,d}(\mu,\rho,\nu)$ denote the connected three-profile Hurwitz number of a parametrized $\mathbb P^1$, with the branch profiles over $0,1,\infty$ unlabelled and each cover weighted by the inverse automorphism order. When
\begin{equation}\label{eq:gw-hurwitz-dimension}
 2h-2-d+\ell(\mu)+\ell(\rho)+\ell(\nu)=0,
\end{equation}
there are no further branch points, and this is the relative invariant
\begin{equation}\label{eq:gw-relative-invariant}
 H^\circ_{h,d}(\mu,\rho,\nu)
 =\left\langle\mu,\rho,\nu\right\rangle^{
 \mathbb P^1,\operatorname{rel}\{0,1,\infty\},\circ}_{h,d}.
\end{equation}
This is the case without descendant insertions of the relative-GW/Hurwitz correspondence of Okounkov--Pandharipande \cite[Section~1.6]{OP}.

For positive integers $m_i$, retain $c(m,j)$ from \eqref{gau:chebyshev}, and write
\begin{equation}\label{eq:gw-hurwitz-indices}
 \begin{gathered}
 k_i=m_i-2j_i>0,\qquad 2E=\sum_i k_i,\qquad J_0=\sum_i j_i,\\
 \mu=(k_1,\ldots,k_n),\qquad
 z_\mu=\prod_{a\ge1}a^{m_a(\mu)}m_a(\mu)!.
 \end{gathered}
\end{equation}
Here $\mu$ denotes the partition with these parts and $m_a(\mu)$ their multiplicities.

\begin{proposition}[Hurwitz expression for the neck]\label{prop:gw-neck-identification}
For $h\ge0$, $n>0$, and $2h-2+n>0$, the Gaussian tensor of Proposition~\ref{prop:gaussian-moments} is
\begin{equation}\label{eq:gw-neck-tensor}
 \begin{split}
 \mathcal A_{h,n}(\mathbf m;t)
 =\frac1{\prod_i m_i}
 \sum_{\substack{0\le j_i\le\lfloor m_i/2\rfloor\\k_i>0}}
 &\left(\prod_i c(m_i,j_i)\right)z_\mu\\[-2pt]
 {}\times&\sum_{\substack{\nu\vdash2E\\\ell(\nu)=2-2h-n+E}}
 t^{\ell(\nu)+J_0}
 H^\circ_{h,2E}(\mu,(2^E),\nu).
 \end{split}
\end{equation}
The expression is zero if $\sum_i m_i$ is odd. Every nonzero term has positive $t$-order. The tensor is the same for either choice of neck end at each insertion.
\end{proposition}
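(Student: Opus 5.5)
The plan is to start from Proposition~\ref{prop:gaussian-moments} and rewrite each pairing count $N_{h,n}(\mathbf k)$, with $\mathbf k=\mu$, as a weighted sum of three-profile Hurwitz numbers. Formula~\eqref{gau:finite-moments} already expresses $\mathcal A_{h,n}(\mathbf m;t)$ as $\prod_i m_i^{-1}$ times a sum over $j_i$ of $\prod_i c(m_i,j_i)$ times $t^{E+J_0+2-2h-n}N_{h,n}(\mu)$. Here the exponent was rewritten using $\tfrac12\sum_i m_i=E+J_0$. Terms with some $k_i=0$ vanish by the stability convention, which is why the sum is restricted to $k_i>0$. It then suffices to prove
\[
N_{h,n}(\mu)=z_\mu\sum_{\nu\vdash 2E,\ \ell(\nu)=f}H^\circ_{h,2E}(\mu,(2^E),\nu),\qquad f=E+2-2h-n.
\]
The power $t^{\ell(\nu)+J_0}$ then matches $t^{f+J_0}$.

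For this identity I will use the monodromy description of covers. A connected degree-$2E$ cover of $\mathbb P^1$ branched over $0,1,\infty$ with profiles $\mu,(2^E),\nu$ corresponds to a triple $(\sigma,\alpha,\beta)$ in $S_{2E}$ with $\sigma\alpha\beta=1$. The cycle types are $\mu$, $(2^E)$, $\nu$, and the group $\langle\sigma,\alpha\rangle$ must be transitive; triples are taken up to simultaneous conjugation. The Hurwitz number weighted by inverse automorphisms equals the number of such triples divided by $(2E)!$. Fixing $\sigma$ to a chosen permutation of cycle type $\mu$ divides the conjugation orbit by the centralizer order $z_\mu$. This gives $H^\circ=\#\{\alpha\}\cdot z_\mu/(2E)!\cdot(2E)!/z_\mu^{2}$, so one has to be careful. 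Concretely, the number of triples with $\sigma$ fixed is (number of conjugates of $\sigma$) times $\#\{\alpha\}$, namely $\frac{(2E)!}{z_\mu}\#\{\alpha\}$. Dividing by $(2E)!$ gives $H^\circ=\#\{\alpha\}/z_\mu$, summed over the $\alpha$ with $(\sigma\alpha)^{-1}$ of type $\nu$.

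Summing over $\nu$ recovers all fixed-point-free involutions $\alpha$ with transitive action and $\#\operatorname{cycles}(\sigma\alpha)=\ell(\nu)$. Riemann--Hurwitz for a genus-$h$ cover gives $2-2h=2E\cdot 2-(2E-n)-E-(2E-\ell(\nu))$, that is $\ell(\nu)=2-2h-n+E=f$. This is exactly the genus condition in~\eqref{gau:pairing-count}, so the displayed identity follows. It also shows that the dimension constraint~\eqref{eq:gw-hurwitz-dimension} holds with $d=2E$, since $2h-2-2E+n+E+f=0$. Invoking~\eqref{eq:gw-relative-invariant} then identifies these numbers with relative invariants, although that is not needed for the formula.

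The remaining claims follow from earlier results. Vanishing for odd $\sum_i m_i$ holds because every $k_i=m_i-2j_i$ has the same total parity, so $\sum_i k_i$ is odd and no partition of $2E$ exists. Positive $t$-order follows from $\ell(\nu)\ge1$ together with Proposition~\ref{comp:positivity}. Independence of the neck end comes from the identity $\mathcal L_m^-=\mathcal L_m^+$ on stable Gaussian correlators together with Lemma~\ref{sew:cycles}. The main obstacle is the automorphism bookkeeping: it must match the labelled-slot convention of $N_{h,n}$, where $\sigma$ is fixed and slots are labelled, against the unlabelled-profile, inverse-automorphism weighting of $H^\circ$. In particular, $z_\mu$ should appear with exactly power one. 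I would verify this against the example $\mathcal A_{1,1}(4;t)=t/4$: here $\mu=(4)$, $z_\mu=4$, and the single torus pairing should give $H^\circ=1/4$.
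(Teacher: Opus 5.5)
Your proposal is correct and follows the paper's proof essentially verbatim. Like the paper, you substitute the identity $N_{h,n}(\mu)=z_\mu\sum_{\ell(\nu)=f}H^\circ_{h,2E}(\mu,(2^E),\nu)$, obtained from the $(2E)!/z_\mu$ size of the conjugacy class and the $1/(2E)!$ weighting of transitive triples, into \eqref{gau:finite-moments}; you read the length condition as the Euler/Riemann--Hurwitz relation, and you deduce end-independence from $\mathcal L_m^-=\mathcal L_m^+$ and Lemma~\ref{sew:cycles}. (One wording slip: $\frac{(2E)!}{z_\mu}\#\{\alpha\}$ counts all triples whose first entry has cycle type $\mu$, not triples with $\sigma$ fixed, but your conclusion $H^\circ=\#\{\alpha\}/z_\mu$ is right.)
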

\begin{proof}
Fix the incoming permutation with its labelled cyclic slots, as in \eqref{gau:pairing-count}. The conjugacy class of profile $\mu$ has $(2E)!/z_\mu$ elements, whereas the weighted cover count divides the number of transitive permutation triples by $(2E)!$. Consequently
\begin{equation}\label{eq:gw-pairing-hurwitz}
 N_{h,n}(\mathbf k)=z_\mu
 \sum_{\substack{\nu\vdash2E\\\ell(\nu)=2-2h-n+E}}
 H^\circ_{h,2E}(\mu,(2^E),\nu).
\end{equation}
The profile $(2^E)$ records the pairing, and the final profile records its faces. The displayed length condition is both the Euler relation for the ribbon graph and \eqref{eq:gw-hurwitz-dimension}. Substitute \eqref{eq:gw-pairing-hurwitz} into \eqref{gau:finite-moments}. Its power of $t$ is $\ell(\nu)+J_0\ge1$. Equality at the two ends follows from the Gaussian residue identity in Subsection~\ref{subsubsec:gaussian-moments} and Lemma~\ref{sew:cycles}.
\end{proof}

Thus the same Gaussian neck, including all its stable insertion tensors, has a relative-GW interpretation for every family under consideration. Its two ends specify the two contour functionals; the three relative target points encode the incoming cycles, pairings, and faces.

To express the closed statement in A-model variables, set
\begin{equation}\label{eq:gw-phase}
 t_A=i t,\qquad \hbar=i g_s.
\end{equation}
For $g\ge2$, the isolated neck vacuum becomes
\begin{equation}\label{eq:gw-universal-vacuum}
 \begin{gathered}
 U_g^A(t_A)=C_gt_A^{2-2g}
 =\lim_{\varepsilon\to0}\varepsilon^{2g-2}
       F_g^{R,A,+}(e^{i\varepsilon t_A}),\\
 Z_{\mathrm{univ}}^A(t_A;g_s)
 =\exp\sum_{g\ge2}g_s^{2g-2}U_g^A(t_A),
 \qquad F_g^{R,A,+}(Q)=a_g\operatorname{Li}_{3-2g}(Q).
 \end{gathered}
\end{equation}
The limit follows from the multiple-cover formula and agrees with Proposition~\ref{prop:gaussian-vacuum} after \eqref{eq:gw-phase}. It removes the finite-area regular terms. The A-model normalization of the stable neck tensors is
\begin{equation}\label{eq:gw-neck-a-tensor}
 \mathsf N^A_{h,n}(\mathbf m;t_A)
 =(-1)^{h-1}\mathcal A_{h,n}(\mathbf m;-it_A).
\end{equation}
These universal tensors couple to the background through the sewing coefficients $d_I,k_{IJ}$.

\subsection{The conifold frame and GW factorizations}
\label{subsec:gw-conventions}

We specify the conifold frame using the mirror curve of each target $Y$. Fix a continuation path in its moduli space from large radius to a smooth fiber near the conifold, and transport the large-radius symplectic basis $A,B$ of the compactified mirror curve. Write its oriented vanishing cycle as $A_c=\alpha A+\beta B$. On this curve, normalize the holomorphic differential by $\int_A\vartheta=1$ and put $\tau=\int_B\vartheta$. With the normalizations of Fang--Liu--Zong \cite[Section~5.1]{FLZ}, the transported large-radius kernel $B^{\mathrm{LR}}$ has $B$-period $2\pi i\vartheta$. The kernel $B^c$ with zero $A_c$-period is therefore
\begin{equation}\label{eq:gw-polarization-kernel}
 B^c-B^{\mathrm{LR}}
 =-\frac{2\pi i\beta}{\alpha+\beta\tau}\vartheta\otimes\vartheta.
\end{equation}

On the mirror curve, replacing $B^{\mathrm{LR}}$ by $B^c$ gives the connected stable-graph transformation of Eynard--Mari\~no--Orantin \cite[Section~4.2]{EMO}. Remodeling \eqref{eq:gw-remodeling-signs} transports this transformation to the analytically continued GW theory. With $t=-it_A$ and $\hbar=ig_s$, its vertices are the continued stable open and closed GW tensors in every genus. At each edge, integrate each incident variable over $B$ with factor $(2\pi i)^{-1}$ and multiply by $2\pi i\beta/(\alpha+\beta\tau)$, the negative of the scalar kernel correction in \eqref{eq:gw-polarization-kernel}. Denote the resulting GW potentials by $F_g^{Y,c,A}$ and put $\log Z_Y^{c,A}=\sum_{g\ge2}g_s^{2g-2}F_g^{Y,c,A}$. Thus
\begin{equation}\label{eq:gw-calibrated-comparison}
 Z_Y^{c,A}(t_A,s;g_s)
 =Z(\mathcal S^Y_{-it_A,s};ig_s).
\end{equation}

We now apply Theorem~\ref{thm:factorization} to this conifold-normalized theory through \eqref{eq:gw-calibrated-comparison}.

The spectral coefficients $I^Y_{g,k}=[t^k]\mathcal I_{Y,g}$ convert under \eqref{eq:gw-phase} to
\[
 I^{Y,A}_{g,k}=(-1)^{g-1}(-i)^kI^Y_{g,k}.
\]

\begin{corollary}[The first two GW factorizations]\label{gw:toric-factorization}
For the framings and conifold calibration specified above, and $g\ge2$, one has
\begin{align}
 F_g^{K_{\mathbb P^2},c,A}(t_A)
 &=C_gt_A^{2-2g}+3^{2-2g}A_g
       +\sum_{k\ge1}I_{g,k}^{\mathbb P^2,A}t_A^k,
 \label{eq:gw-p2-expansion}\\
 F_g^{K_{\mathbb F_0},c,A}(t_A,r)
 &=C_gt_A^{2-2g}
   +2^{2-2g}[2A_g+a_g\operatorname{Li}_{3-2g}(-r)]
       +\sum_{k\ge1}I_{g,k}^{\mathbb F_0,A}(r)t_A^k.
 \label{eq:gw-f0-expansion}
\end{align}
For local $\mathbb F_0$, the spectator parameter lies in $D_r$ from \eqref{eq:gw-comparison-domains}. Each coefficient of $t_A^k$, $k\ge1$, is a finite contraction of background open GW tensors and the relative-GW neck tensors, with source and edge weights $d_I,k_{IJ}$ supplied by the spectral family. For each fixed $g$, the remainders converge jointly near $t_A=0$ and the chosen spectator value.
\end{corollary}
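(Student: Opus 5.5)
The plan is to transport the spectral factorization \eqref{eq:gw-spectral-factorization} to A-model variables through the calibrated comparison \eqref{eq:gw-calibrated-comparison}, and then to identify each of the three terms. By Lemma~\ref{lem:gw-regular-families} the families for $K_{\mathbb P^2}$ and $K_{\mathbb F_0}$ are regular spectral-node families. Theorem~\ref{thm:factorization} therefore gives, for each $g\ge2$,
\[
 F_g(\mathcal S^Y_{t,s})=C_gt^{2-2g}+F_g(\mathcal B^Y_s)+\sum_{k\ge1}I^Y_{g,k}(s)t^k,
\]
with the remainder in $t\mathbb C\{t,s-s_0\}$. Comparing coefficients of $g_s^{2g-2}$ in \eqref{eq:gw-calibrated-comparison}, with $\hbar=ig_s$, gives $F_g^{Y,c,A}(t_A,s)=(-1)^{g-1}F_g(\mathcal S^Y_{-it_A,s})$. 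The task is then to convert each term.

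For the polar term, the substitution gives $(-1)^{g-1}C_g(-it_A)^{2-2g}=(-1)^{g-1}(-1)^{1-g}C_gt_A^{2-2g}=C_gt_A^{2-2g}$. For the remainder, $t^k=(-i)^kt_A^k$ gives exactly the conversion $I^{Y,A}_{g,k}=(-1)^{g-1}(-i)^kI^Y_{g,k}$ stated before the corollary. Joint convergence in $(t_A,s)$ is inherited directly from \eqref{eq:main-free-energy}.

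For the finite part, I use the background identifications of Subsection~\ref{subsec:gw-backgrounds}. By \eqref{eq:gw-p2-scale}, the inherited local $\mathbb P^2$ background is the $\mathbb C^3$ mirror with $\lambda$ rescaled by $-3$, up to a constant multiple of $dx$. That extra term does not affect the kernel or the closing residues, as noted there. A rescaling $\lambda\mapsto c\lambda$ multiplies $F_g$ by $c^{2-2g}$, so $F_g(\mathcal B^{\mathbb P^2})=3^{2-2g}F_g(\mathcal B_{\mathbb C^3})$. Remodeling \eqref{eq:gw-remodeling-signs} gives $F_g(\mathcal B_{\mathbb C^3})=(-1)^{g-1}F_g^{\mathbb C^3,A}=(-1)^{g-1}A_g$, so after multiplying by $(-1)^{g-1}$ we obtain $3^{2-2g}A_g$ by \eqref{eq:gw-background-vacua}. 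The same steps with scale $-2$ for local $\mathbb F_0$ give $2^{2-2g}F_g^{R,A}(Q'_R)$ with $Q'_R=-1/r$. The inversion symmetry $\operatorname{Li}_{3-2g}(Q^{-1})=\operatorname{Li}_{3-2g}(Q)$ then rewrites this at $-r$, giving $2^{2-2g}[2A_g+a_g\operatorname{Li}_{3-2g}(-r)]$ for $r\in D_r$ near infinity.

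For the description of the coefficients, the vertex weights in the graph sum of Theorem~\ref{thm:factorization} need to be interpreted on the GW side. The background tensors \eqref{sew:background-tensor} become contour integrals of $(-1)^{h-1}d_1\cdots d_nF^{M,A}_{h,n}$ by \eqref{eq:gw-remodeling-signs}, carrying the scale factors $(-3)^{2-2h-n}$ or $(-2)^{2-2h-n}$. The Gaussian tensors become the relative-GW tensors of Proposition~\ref{prop:gw-neck-identification}, normalized as in \eqref{eq:gw-neck-a-tensor}. Finiteness at each $(g,k)$ then comes from Proposition~\ref{comp:finiteness}.

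I expect the main obstacle to be the bookkeeping of signs and phases. Specifically, one must check that the per-vertex factors $(-1)^{h-1}$, the scale powers, and the phases $i^{\pm1}$ from $t$ and $\hbar$ assemble into the global $(-1)^{g-1}(-i)^k$, using $\hbar$-degree $2g(\Gamma)-2$. One must also confirm that the background germs are evaluated inside their large-radius domains, which is why we shrink to $D_r$. I would handle the first point by noting that the statement only asserts that each coefficient is a finite contraction with these weights. It therefore suffices to absorb all constants into the normalized tensors and leave the source and edge weights $d_I,k_{IJ}$ spectral, so no separate sign identity is required beyond the conversion of $I^Y_{g,k}$ already recorded.
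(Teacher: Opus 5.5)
Your proposal is correct and follows the paper's proof. Both arguments apply the calibrated comparison \eqref{eq:gw-calibrated-comparison} to the spectral factorization \eqref{eq:gw-spectral-factorization}, convert the polar term and the remainder under $t=-it_A$, $\hbar=ig_s$, and read off the constant terms from the scaled background identifications, remodeling, and \eqref{eq:gw-background-vacua}. The coefficient description comes from Proposition~\ref{prop:gw-neck-identification} and Theorem~\ref{thm:factorization}. Your explicit sign and scale checks, namely $(-1)^{g-1}(-i)^{2-2g}=1$, $(\pm3)^{2-2g}=3^{2-2g}$, and the polylogarithm inversion at $Q'_R=-1/r$, spell out steps the paper compresses into one line.
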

\begin{proof}
Apply \eqref{eq:gw-calibrated-comparison} to \eqref{eq:gw-spectral-factorization}. The central background identifications and their scales give the constant terms by \eqref{eq:gw-background-vacua}. Proposition~\ref{prop:gw-neck-identification} and \eqref{eq:gw-universal-vacuum} identify the universal neck. Theorem~\ref{thm:factorization} gives the finite graph coefficients and the holomorphic remainder of positive period order. The substitutions \eqref{eq:gw-phase} preserve both conclusions.
\end{proof}

The finite constants in \eqref{eq:gw-p2-expansion} and \eqref{eq:gw-f0-expansion} agree with the matrix-model predictions of Mari\~no--Zakany \cite[Section~3.2]{MZ} and Kashaev--Mari\~no--Zakany \cite[Sections~3.1 and~3.3]{KMZ}, respectively. In the latter comparison, $r=e^{2\xi}$ in their notation, and the original degree-zero contribution $4A_g$ is added to their positive-degree potential. The background identifications above express these constants as the scaled $\mathbb C^3$ and resolved-conifold potentials.

\begin{corollary}[Local $\mathbb F_1$ factorization]\label{gw:f1-factorization}
For the chosen framing, conifold calibration, $q\in D_q$ from \eqref{eq:gw-comparison-domains}, and $g\ge2$,
\begin{equation}\label{eq:gw-f1-expansion}
 F_g^{K_{\mathbb F_1},c,A}(t_A,q)
 =C_gt_A^{2-2g}+F_g^{X_2,A}(-q(1-q)^3)
       +\sum_{k\ge1}I_{g,k}^{\mathbb F_1,A}(q)t_A^k.
\end{equation}
The coefficients of $t_A^k$, $k\ge1$, are finite contractions of the open $X_2$ tensors and the universal relative-GW neck tensors, with source and edge weights $d_I,k_{IJ}$ supplied by the spectral family. For each fixed $g$, the remainder is jointly holomorphic near $t_A=0$ and the chosen spectator value.
\end{corollary}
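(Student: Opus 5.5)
The proof will mirror that of Corollary~\ref{gw:toric-factorization}, replacing the toric backgrounds $\mathbb C^3$ and $R$ by $X_2$. It combines four earlier results. Lemma~\ref{lem:gw-regular-families} shows that the $K_{\mathbb F_1}$ family \eqref{eq:gw-f1-family} is a regular spectral-node family for $q$ in a spectator neighborhood satisfying \eqref{eq:gw-regular-spectators}. Theorem~\ref{thm:factorization} then yields \eqref{eq:gw-spectral-factorization} with $Y=K_{\mathbb F_1}$. We transport this identity to the A-model through the calibrated comparison \eqref{eq:gw-calibrated-comparison}, which uses remodeling \cite{FLZ} for $K_{\mathbb F_1}$ itself in the framing $f=1$ with nonzero tangent weights \eqref{eq:gw-original-tangent-weights}. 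Finally, we identify the background factor by Proposition~\ref{gw:f1}.

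First, substitute $t=-it_A$ and $\hbar=ig_s$ into \eqref{eq:main-free-energy}. The Gaussian term $C_gt^{2-2g}$ becomes $(-1)^{g-1}C_gt_A^{2-2g}$. The sign $(-1)^{g-1}$ from \eqref{eq:gw-remodeling-signs} for the closed potential cancels it, giving $C_gt_A^{2-2g}$, in agreement with \eqref{eq:gw-universal-vacuum}. The positive-order coefficients convert to $I^{\mathbb F_1,A}_{g,k}=(-1)^{g-1}(-i)^kI^{\mathbb F_1}_{g,k}$, as displayed before Corollary~\ref{gw:toric-factorization}.

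Second, identify the constant term. By \eqref{eq:gw-f1-projection}, the inherited background spectral data of the normalization \eqref{eq:gw-f1-normalization} coincide with the $X_2$ spectral data at scale $1$, with no additive $dx$ correction. Hence $F_g(\mathcal B_q)$ is the spectral free energy of that curve. Proposition~\ref{gw:f1} and \eqref{eq:gw-remodeling-signs} then give $F_g(\mathcal B_q)=(-1)^{g-1}F_g^{X_2,A}(Q)$ with $Q=-q(1-q)^3$. This is valid as an analytic germ for $q\in D_q$, after shrinking $D_q$ into the large-radius domain of the $X_2$ germ. The sign $(-1)^{g-1}$ is again absorbed by the overall conversion, so the constant term is $F_g^{X_2,A}(-q(1-q)^3)$.

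Third, interpret the positive-order coefficients. By Theorem~\ref{thm:factorization}, each $I_{g,k}$ is a finite graph sum. Its type-$B$ vertices are contour integrals of $\omega^{\mathrm{back}}_{h,n}$. Via Proposition~\ref{gw:f1} these equal, up to the signs $(-1)^{h-1}$, contour integrals of $d_1\cdots d_nF^{X_2,A}_{h,n}$, continued from the brane chart to the discs $D_\alpha$. Its type-$A$ vertices are the Hurwitz/relative-GW neck tensors of Proposition~\ref{prop:gw-neck-identification}, rewritten as $\mathsf N^A_{h,n}$ by \eqref{eq:gw-neck-a-tensor}. Joint holomorphy of the remainder near $t_A=0$ and $q_0$ is exactly the convergence statement in \eqref{eq:main-free-energy}, which is preserved by the linear substitution.

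The main obstacle is the bookkeeping of domains and signs in the background identification. Proposition~\ref{gw:f1} holds only near large radius $q\to0$, whereas the regular-spectator condition requires $q\neq0$. We therefore need $D_q$ inside a punctured neighborhood where both are valid and the continuation of the open variables to the discs is unambiguous. We also need to check that the $(-1)^n$ and $x\mapsto-x$ conventions of Proposition~\ref{gw:f1} leave the closed identity with exactly the sign in \eqref{eq:gw-remodeling-signs}. Since closed free energies involve no open insertions, I would verify this directly from the closing-residue formula \eqref{eq:closed-convention}, noting that a simultaneous sign change of kernel and closing residue preserves $F_g$.
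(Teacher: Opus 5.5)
Your proposal is correct and follows the same route as the paper. It uses regularity of the $K_{\mathbb F_1}$ family from Lemma~\ref{lem:gw-regular-families}, the calibrated remodeling comparison \eqref{eq:gw-calibrated-comparison}, identification of the scale-$1$ background by Proposition~\ref{gw:f1}, and then \eqref{eq:gw-spectral-factorization} exactly as in Corollary~\ref{gw:toric-factorization}; your sign bookkeeping (the two factors $(-1)^{g-1}$ cancelling, and $I^{\mathbb F_1,A}_{g,k}=(-1)^{g-1}(-i)^kI^{\mathbb F_1}_{g,k}$) and your domain concern are already built into the paper's conventions and its choice of $D_q$ in \eqref{eq:gw-comparison-domains}.
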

\begin{proof}
The original $K_{\mathbb F_1}$ mirror satisfies Lemma~\ref{lem:gw-regular-families}, and the remodeling theorem gives its calibrated comparison \eqref{eq:gw-calibrated-comparison}. Proposition~\ref{gw:f1} identifies the normalized background, whose scale is $1$. The rest follows from \eqref{eq:gw-spectral-factorization} exactly as above.
\end{proof}

For each of the three targets, define
\begin{equation}\label{eq:gw-interaction-conversion}
 \mathcal I_Y^A(t_A,s;g_s)=\mathcal I_Y(-it_A,s;ig_s).
\end{equation}
The resulting partition identities are
\begin{equation}\label{eq:gw-partition-factorization}
 Z_Y^{c,A}=Z_{\mathrm{back}}^A Z_{\mathrm{neck},Y}^A,
 \qquad Z_{\mathrm{neck},Y}^A
       =Z_{\mathrm{univ}}^A\exp\mathcal I_Y^A.
\end{equation}
Here $Z_{\mathrm{back}}^A$ is $Z_{\mathbb C^3}^A(g_s/3)$, $Z_R^A(-r;g_s/2)$, or $Z_{X_2}^A(-q(1-q)^3;g_s)$, respectively. These identities concern closed genera $g\ge2$.

\subsubsection*{The interaction term}
The series $\mathcal I_Y^A$ sums connected contractions of the background and neck tensors with the sewing sources and edges. At each genus $g\ge2$, its expansion in $t_A$ has positive order. We express these contractions in A-model terms.

Let $M$ denote a background target and put $\delta_M=-3,-2,1$ for $M=\mathbb C^3,R,X_2$, respectively. On a background coordinate disc, the A-model vertex is
\begin{equation}\label{eq:gw-background-tensor}
 \mathsf B^A_{h,n}(I_1,\ldots,I_n)
 =\delta_M^{2-2h-n}
 \left(\prod_{j=1}^n\mathcal C_{I_j}\right)
        d_1\cdots d_nF_{h,n}^{M,A}.
\end{equation}
The contours use the Morse coordinates of the inherited spectral data on $\mathcal B_s^Y$ and act after the open potential of $M$ has been continued to those discs. For the neck, the corresponding insertion tensors are precisely \eqref{eq:gw-neck-a-tensor}, with the two ends interpreted through \eqref{gau:end-moments}.

Let $\mathsf J=(\mathsf J_I)_I$ be formal variables indexed by the sewing contours $I=(a,m)$ at background ramification points or $I=(\pm,m)$ at neck ends. Extend each tensor by zero whenever an argument belongs to the other type, and set
\begin{equation}\label{eq:gw-vertices}
 \mathcal V^A(\mathsf J)=
 \sum_{\substack{h\ge0,n\ge1\\2h-2+n>0}}
 \frac{g_s^{2h-2+n}}{n!}
 \left(\mathsf B^A_{h,n}(\mathsf J^{\otimes n})
                 +\mathsf N^A_{h,n}(\mathsf J^{\otimes n};t_A)\right).
\end{equation}
With the sewing coefficients $d_I,k_{IJ}$ of \eqref{sew:couplings} evaluated at $t=-it_A$, the interaction is
\begin{equation}\label{eq:gw-graph-formula}
 \mathcal I_Y^A
 =\left[\log\left.
 \exp\left(-g_s^{-1}\sum_I d_I\partial_{\mathsf J_I}
       -\tfrac12\sum_{I,J}k_{IJ}
                    \partial_{\mathsf J_I}\partial_{\mathsf J_J}\right)
 e^{\mathcal V^A(\mathsf J)}\right|_{\mathsf J=0}\right]_{\ge2}.
\end{equation}
Here $[\cdot]_{\ge2}$ retains the terms $g_s^{2g-2}$ for $g\ge2$. This is \eqref{sew:graph-operator} under $\hbar=ig_s$, $J_I=-i\mathsf J_I$. Each source contributes $-d_I$, each edge contributes $-k_{IJ}$, and each connected graph carries its inverse automorphism order. Theorem~\ref{thm:factorization} supplies the coefficientwise finiteness and positive period order in \eqref{eq:gw-graph-formula}.

\subsection{Contour insertions as GW ancestors}
\label{subsec:gw-graphs}

The interaction formula \eqref{eq:gw-graph-formula} uses the open GW tensors \eqref{eq:gw-background-tensor}. We now express their contour insertions as closed GW ancestors.

For $M=\mathbb C^3,R,X_2$, write $\langle\!\langle\cdots\rangle\!\rangle^{M,A}_{h,n}$ for the connected closed equivariant GW ancestor correlator, summed over degrees and additional primary insertions at the same closed parameters and framing specialization as $F_{h,n}^{M,A}$. Its $n$ distinguished markings carry classes $\bar\psi_j$ pulled back from $\overline{\mathcal M}_{h,n}$ after forgetting the map and additional markings.

Let $\widehat e_a$ be the normalized quantum canonical basis, let $E$ have basis $\epsilon_a$, and define $\mathsf U:E\to H_T^*(M)$ by $\mathsf U\epsilon_a=\widehat e_a$, with equivariant coefficients localized and specialized. Use the calibrated GW matrix $\mathsf R(z)$ of \cite[Theorem~3.4]{FLZ} for $M=\mathbb C^3,R$ and \cite[Sections~1.2 and~2.3]{GXZ} for $M=X_2$, with its canonical-basis indices regarded as indices on $E$. It satisfies $\mathsf R(-z)^T\mathsf R(z)=I$. For $R$, use $Q'_R=-1/r$; for $X_2$, use Proposition~\ref{gw:f1}. Match ramification points with quantum idempotents and choose square roots as in \cite[Section~7.2]{FLZ} and \cite[equations~(29)--(31)]{GXZ}, respectively. Use Morse coordinates $x-x(a)=\xi_a^2/2$.

\Needspace{8\baselineskip}
\begin{proposition}[The ancestor insertion]\label{prop:gw-ancestor-insertion}
For a background contour $\mathcal C_\phi\omega=(2\pi i)^{-1}\int_{\partial D_a}\phi\omega$ with holomorphic weight $\phi$, define $s_\phi(z)$ by the following formula. For the weight $\xi_a^m/m$ of $\mathcal C_{a,m}$, write $s_{a,m}=s_\phi$; then
\begin{align}
 s_\phi(z)&=-i\sum_{\ell\ge0}(2\ell+1)!!\,[\xi_a^{2\ell+1}]\phi
                  \;z^\ell\mathsf U\mathsf R(z)\epsilon_a,
 \label{eq:gw-ancestor-state}\\
 s_{a,2\ell+1}(z)&=-i(2\ell-1)!!z^\ell\mathsf U\mathsf R(z)\epsilon_a,
 \qquad s_{a,2\ell+2}(z)=0.
 \label{eq:gw-odd-ancestor-state}
\end{align}
These are cohomology-valued formal series, with $\ell\ge0$ and $(-1)!!=1$. For $M=\mathbb C^3,R,X_2$, $n\ge1$, and $2h-2+n>0$, one has
\begin{equation}\label{eq:gw-contour-ancestor-correlator}
 \left(\prod_{j=1}^n\mathcal C_{\phi_j}\right)
 d_1\cdots d_nF_{h,n}^{M,A}
 =\left\langle\!\left\langle
 s_{\phi_1}(\bar\psi_1),\ldots,s_{\phi_n}(\bar\psi_n)
 \right\rangle\!\right\rangle^{M,A}_{h,n}.
\end{equation}
Thus each contour on an open GW slot gives an ancestor insertion at the corresponding closed marking. The background tensor \eqref{eq:gw-background-tensor} is the ancestor correlator multiplied by $\delta_M^{2-2h-n}$. Only finitely many ancestor powers contribute at fixed $(h,n)$.
\end{proposition}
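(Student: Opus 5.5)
The plan is to combine the remodeling identity \eqref{eq:gw-remodeling-signs} with the Givental--Teleman type graph formula used in the proof of remodeling. That formula writes the spectral correlators $\omega_{h,n}$ of the background as an $\mathsf R$-matrix graph sum whose leaves are the one-forms $\mathsf U\mathsf R(z)$ applied to local Laplace-type leaf forms. The latter are the forms from \cite[Section~7.2]{FLZ}, and from \cite[equations~(29)--(31)]{GXZ} for $X_2$. On the A-model side, the same graph sum with leaves $\bar\psi$-insertions computes the ancestor correlators $\langle\!\langle\cdots\rangle\!\rangle^{M,A}_{h,n}$. Indeed, the closed ancestor theory is semisimple, and by Teleman's reconstruction (as invoked in \cite{FLZ}, resp.\ \cite{GXZ}) it is given by the $\mathsf R$-action on a product of trivial theories.

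Step one: write, in each open variable $p_j$ and near a ramification point $a$, the differential $d_jF^{M,A}_{h,n}$ after continuation. By \cite[Theorem~6.8]{FLZ} it is the contraction of the ancestor correlator with the leaf one-forms $\theta^{a}_\ell(p)$, characterized by $\theta^a_0\sim -i\,\xi_a^{-2}d\xi_a$ (up to the normalization of $\widehat e_a$) and $\theta^a_{\ell+1}=d\bigl(\xi_a^{-1}\partial_{\xi_a}\bigr)$-type descendants, so that $\theta^a_\ell$ has principal part $-i(2\ell+1)!!\,\xi_a^{-2\ell-2}d\xi_a$ plus holomorphic terms. The $\mathsf R(z)$-dressing redistributes these forms among the ramification points. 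It is exactly what produces the factor $\mathsf U\mathsf R(z)\epsilon_a$ once the insertion is written in the flat basis, using $\mathsf R(-z)^T\mathsf R(z)=I$ to pass the $\mathsf R$ from leaf to insertion.

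Step two: apply $\mathcal C_\phi$ to one slot. Since $\phi$ is holomorphic on $\overline{D_a}$, the contour integral kills the holomorphic parts of the leaf forms. The residue of $\phi\,\xi_a^{-2\ell-2}d\xi_a$ picks $[\xi_a^{2\ell+1}]\phi$, which yields \eqref{eq:gw-ancestor-state}. The leaf forms attached to other ramification points $b\neq a$ are holomorphic on $D_a$ and contribute nothing. For $\phi=\xi_a^m/m$ one has $[\xi_a^{2\ell+1}]\phi=\delta_{m,2\ell+1}/m$, giving $(2\ell+1)!!/(2\ell+1)=(2\ell-1)!!$ for odd $m$ and zero for even $m$; this is \eqref{eq:gw-odd-ancestor-state}. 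Doing this slot by slot with nested contours gives \eqref{eq:gw-contour-ancestor-correlator}. The statement about $\delta_M^{2-2h-n}$ is then just \eqref{eq:gw-background-tensor}. Finiteness follows because $\bar\psi$-powers beyond $3h-3+n$ vanish on $\overline{\mathcal M}_{h,n}$, equivalently from finite pole order.

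The main obstacle is step one. It requires pinning down the precise normalization and sign of the leaf forms, namely the factor $-i$, the square-root choices matching ramification points with idempotents, and the $(-1)^{h-1}$ of \eqref{eq:gw-remodeling-signs} together with the $(-1)^n$ kernel sign. The aim is for the constants to match exactly. For this I would first fix the disk and annulus comparisons, i.e.\ the unstable identities, since these determine the leaf normalization. I would then check the formula on $\omega_{0,3}$, whose polar part at $a$ is known from \cite[Theorem~4.1]{EO2007}, against the genus-zero three-point ancestor $\langle\!\langle\widehat e_a,\widehat e_a,\widehat e_a\rangle\!\rangle$.
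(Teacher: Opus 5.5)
Your proposal is essentially the paper's argument. You identify the differentiated open leaf as a form with a single pole at $a$ and principal part $-i(2\ell+1)!!\,\xi_a^{-2\ell-2}d\xi_a$, match it against the ancestor leaf $\mathsf R(-z)^{T}\mathsf U^{-1}s(z)$ in the common graph sum, absorb $\mathsf R$ by unitarity, read off $[\xi_a^{2\ell+1}]\phi$ from the contour, and get finiteness from $\dim\overline{\mathcal M}_{h,n}$.

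The paper differs only in bookkeeping. It stays on the A-model side, using the open leaf $-\theta^\ell_a/\sqrt{-2}$ of \cite{FLZ} and removing the $S$-operator by the descendant--ancestor relation of \cite{KM}, so the $(-1)^{h-1}$ and kernel signs you flag as the main obstacle never enter. It also treats $X_2$ separately, through the Laplace formula of \cite{GXZ} with $\eta_*=i\xi_a$ and one minus sign per boundary.
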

\begin{proof}
For $M=\mathbb C^3,R$, the differentials in the B-model graph expansion have principal parts
\begin{equation}\label{eq:gw-leaf-principal-part}
 \theta_a^\ell=-\sqrt2(2\ell+1)!!\,
                 \frac{d\xi_a}{\xi_a^{2\ell+2}}+\text{holomorphic},
\end{equation}
and the differentiated A-model open leaf is $-\theta_a^\ell/\sqrt{-2}$ \cite[Sections~5.10 and~7.3]{FLZ}. With $\sqrt{-2}=i\sqrt2$, contour integration extracts $-i(2\ell+1)!![\xi_a^{2\ell+1}]\phi$. The descendant--ancestor transformation of Kontsevich--Manin \cite{KM}, in the quantized form of \cite[Theorem~5.1]{Givental}, removes the $S$-operator from the descendant leaf in \cite[equations~(3.10)--(3.12)]{FLZ}. An ancestor insertion $s(z)$ therefore contributes the coordinate column $\mathsf R(-z)^T\mathsf U^{-1}s(z)$; equivalently, this is $\mathsf R(z)^{-1}\mathsf U^{-1}s(z)$ in \cite[Proposition~7.3]{Givental}. For $s=s_\phi$, unitarity gives the same extracted coefficients. The remaining graph weights agree under remodeling, proving \eqref{eq:gw-contour-ancestor-correlator}. Taking $\phi=\xi_a^m/m$ gives \eqref{eq:gw-odd-ancestor-state}. Finiteness follows from $\dim\overline{\mathcal M}_{h,n}=3h-3+n$.

For $M=X_2$, \cite[Section~1.4 and Propositions~2.2 and~2.5]{GXZ} identifies the closed ancestor graph sum and its open leaves. By their Laplace formula \cite[equations~(22) and~(50)]{GXZ}, the inverse $R$-matrix cancels the lower polar terms in the full open leaf. Apply Proposition~\ref{gw:f1} and take $\eta_*=i\xi_a$ for their Morse coordinate. The open-coordinate and normal-weight changes give one minus sign per boundary, giving the same extracted coefficient $-i(2\ell+1)!![\xi_a^{2\ell+1}]\phi$ and ancestor leaf $\mathsf R(z)^{-1}\mathsf U^{-1}s(z)$. The preceding calculation then applies.
\end{proof}

For all three targets, Propositions~\ref{prop:gw-ancestor-insertion} and~\ref{prop:gw-neck-identification} express each positive-power coefficient in \eqref{eq:gw-graph-formula} as a finite contraction of closed background ancestors and relative GW invariants of $\mathbb P^1$, with the scales \eqref{eq:gw-background-tensor} and spectral sewing weights \eqref{sew:couplings}. The Gaussian vacuum supplies the universal pole, the normalized background supplies the finite part, and these contractions determine the remaining Taylor coefficients.


\begin{thebibliography}{ABDKS26}
\raggedright

\bibitem[ABDKS26]{ABDKS}
A.~Alexandrov, B.~Bychkov, P.~Dunin-Barkowski, M.~Kazarian, and S.~Shadrin, \emph{Any topological recursion on a rational spectral curve is KP integrable}, Commun. Math. Phys. \textbf{407} (2026), article~69, \href{https://doi.org/10.1007/s00220-026-05566-9}{doi:10.1007/s00220-026-05566-9}.

\bibitem[ACNP15]{ACNP}
J.~E.~Andersen, L.~O.~Chekhov, P.~Norbury, and R.~C.~Penner, \emph{Topological recursion for Gaussian means and cohomological field theories}, Theor. Math. Phys. \textbf{185} (2015), 1685--1717, \href{https://doi.org/10.1007/s11232-015-0373-0}{doi:10.1007/s11232-015-0373-0}.

\bibitem[Ali25]{Alim}
M.~Alim, \emph{Intrinsic non-perturbative topological strings}, Adv. Theor. Math. Phys. \textbf{29} (2025), no.~5, 1365--1406, \href{https://doi.org/10.4310/atmp.251023013812}{doi:10.4310/atmp.251023013812}.

\bibitem[BCOV94]{BCOV}
M.~Bershadsky, S.~Cecotti, H.~Ooguri, and C.~Vafa, \emph{Kodaira--Spencer theory of gravity and exact results for quantum string amplitudes}, Commun. Math. Phys. \textbf{165} (1994), 311--428, \href{https://doi.org/10.1007/BF02099774}{doi:10.1007/BF02099774}.

\bibitem[BKMP09]{BKMP}
V.~Bouchard, A.~Klemm, M.~Mari\~no, and S.~Pasquetti, \emph{Remodeling the B-model}, Commun. Math. Phys. \textbf{287} (2009), 117--178, \href{https://doi.org/10.1007/s00220-008-0620-4}{doi:10.1007/s00220-008-0620-4}.

\bibitem[BKMP10]{BKMPorb}
V.~Bouchard, A.~Klemm, M.~Mari\~no, and S.~Pasquetti, \emph{Topological open strings on orbifolds}, Commun. Math. Phys. \textbf{296} (2010), 589--623, \href{https://doi.org/10.1007/s00220-010-1020-0}{doi:10.1007/s00220-010-1020-0}.

\bibitem[Bri25]{Brini}
A.~Brini, \emph{Conifold gap and all-genus mirror symmetry for local $\mathbb P^2$}, \href{https://arxiv.org/abs/2509.19298}{arXiv:2509.19298}.

\bibitem[CGYZ26]{CGYZ}
H.-L.~Chang, S.~Guo, L.~You, and H.~Zhang, \emph{Conifold gap for the quintic threefold}, \href{https://arxiv.org/abs/2609.13921}{arXiv:2609.13921}.

\bibitem[CCFZ26]{CCFZ}
J.~Chen, L.~Chen, B.~Fang, and Z.~Zong, \emph{Conifold Gap Theorem for Topological Recursion}, \href{https://arxiv.org/abs/2608.11960v1}{arXiv:2608.11960v1}.

\bibitem[CI21]{CI}
T.~Coates and H.~Iritani, \emph{Gromov--Witten invariants of local $\mathbb P^2$ and modular forms}, Kyoto J. Math. \textbf{61} (2021), no.~3, 543--706, \href{https://doi.org/10.1215/21562261-2021-0010}{doi:10.1215/21562261-2021-0010}.

\bibitem[dMKR10]{dMKR}
R.~de Mello Koch and S.~Ramgoolam, \emph{From Matrix Models and quantum fields to Hurwitz space and the absolute Galois group}, \href{https://arxiv.org/abs/1002.1634}{arXiv:1002.1634}.

\bibitem[DMSS13]{DMSS}
O.~Dumitrescu, M.~Mulase, B.~Safnuk, and A.~Sorkin, \emph{The spectral curve of the Eynard--Orantin recursion via the Laplace transform}, in \emph{Algebraic and Geometric Aspects of Integrable Systems and Random Matrices}, Contemp. Math. \textbf{593} (2013), 263--315, \href{https://doi.org/10.1090/conm/593/11867}{doi:10.1090/conm/593/11867}.

\bibitem[DOSS14]{DOSS}
P.~Dunin-Barkowski, N.~Orantin, S.~Shadrin, and L.~Spitz, \emph{Identification of the Givental formula with the spectral curve topological recursion procedure}, Commun. Math. Phys. \textbf{328} (2014), 669--700, \href{https://doi.org/10.1007/s00220-014-1887-2}{doi:10.1007/s00220-014-1887-2}.

\bibitem[Eyn08]{EynardPartitions}
B.~Eynard, \emph{All orders asymptotic expansion of large partitions}, J. Stat. Mech. Theory Exp. \textbf{2008} (2008), no.~7, article~P07023, \href{https://doi.org/10.1088/1742-5468/2008/07/P07023}{doi:10.1088/1742-5468/2008/07/P07023}.

\bibitem[Eyn14]{EynardIntersection}
B.~Eynard, \emph{Invariants of spectral curves and intersection theory of moduli spaces of complex curves}, Commun. Number Theory Phys. \textbf{8} (2014), no.~3, 541--588, \href{https://doi.org/10.4310/CNTP.2014.v8.n3.a4}{doi:10.4310/CNTP.2014.v8.n3.a4}.

\bibitem[EMO07]{EMO}
B.~Eynard, M.~Mari\~no, and N.~Orantin, \emph{Holomorphic anomaly and matrix models}, J. High Energy Phys. \textbf{2007} (2007), no.~6, article~058, \href{https://doi.org/10.1088/1126-6708/2007/06/058}{doi:10.1088/1126-6708/2007/06/058}.

\bibitem[EO07]{EO2007}
B.~Eynard and N.~Orantin, \emph{Invariants of algebraic curves and topological expansion}, Commun. Number Theory Phys. \textbf{1} (2007), 347--452, \href{https://doi.org/10.4310/CNTP.2007.v1.n2.a4}{doi:10.4310/CNTP.2007.v1.n2.a4}.

\bibitem[EO15]{EO2015}
B.~Eynard and N.~Orantin, \emph{Computation of open Gromov--Witten invariants for toric Calabi--Yau $3$-folds by topological recursion, a proof of the BKMP conjecture}, Commun. Math. Phys. \textbf{337} (2015), 483--567, \href{https://doi.org/10.1007/s00220-015-2361-5}{doi:10.1007/s00220-015-2361-5}.

\bibitem[FP00]{FP}
C.~Faber and R.~Pandharipande, \emph{Hodge integrals and Gromov--Witten theory}, Invent. Math. \textbf{139} (2000), 173--199, \href{https://doi.org/10.1007/s002229900028}{doi:10.1007/s002229900028}.

\bibitem[FL13]{FangLiu}
B.~Fang and C.-C.~M.~Liu, \emph{Open Gromov--Witten invariants of toric Calabi--Yau $3$-folds}, Commun. Math. Phys. \textbf{323} (2013), 285--328, \href{https://doi.org/10.1007/s00220-013-1771-5}{doi:10.1007/s00220-013-1771-5}.

\bibitem[FLZ20]{FLZ}
B.~Fang, C.-C.~M.~Liu, and Z.~Zong, \emph{On the remodeling conjecture for toric Calabi--Yau $3$-orbifolds}, J. Amer. Math. Soc. \textbf{33} (2020), 135--222, \href{https://doi.org/10.1090/jams/934}{doi:10.1090/jams/934}.

\bibitem[GV95]{GhoshalVafa}
D.~Ghoshal and C.~Vafa, \emph{$c=1$ string as the topological theory of the conifold}, Nuclear Phys. B \textbf{453} (1995), 121--128, \href{https://doi.org/10.1016/0550-3213(95)00408-K}{doi:10.1016/0550-3213(95)00408-K}.

\bibitem[Giv01]{Givental}
A.~B.~Givental, \emph{Gromov--Witten invariants and quantization of quadratic Hamiltonians}, Mosc. Math. J. \textbf{1} (2001), no.~4, 551--568, \href{https://doi.org/10.17323/1609-4514-2001-1-4-551-568}{doi:10.17323/1609-4514-2001-1-4-551-568}.

\bibitem[GHM16]{GHM}
A.~Grassi, Y.~Hatsuda, and M.~Mari\~no, \emph{Topological strings from quantum mechanics}, Ann. Henri Poincar\'e \textbf{17} (2016), 3177--3235, \href{https://doi.org/10.1007/s00023-016-0479-4}{doi:10.1007/s00023-016-0479-4}.

\bibitem[GKN19]{GKN}
S.~Grushevsky, I.~Krichever, and C.~Norton, \emph{Real-normalized differentials: limits on stable curves}, Russian Math. Surveys \textbf{74} (2019), no.~2, 265--324, \href{https://doi.org/10.1070/RM9877}{doi:10.1070/RM9877}.

\bibitem[GXZ26]{GXZ}
S.~Guo, J.~Xu, and Q.~Zhang, \emph{Structure of higher-genus open-closed Gromov--Witten theory of $\mathcal O_{\mathbb P^1}(p-1)\oplus\mathcal O_{\mathbb P^1}(-p-1)$}, \href{https://arxiv.org/abs/2609.40257}{arXiv:2609.40257}.

\bibitem[HZ86]{HZ}
J.~Harer and D.~Zagier, \emph{The Euler characteristic of the moduli space of curves}, Invent. Math. \textbf{85} (1986), 457--485, \href{https://doi.org/10.1007/BF01390325}{doi:10.1007/BF01390325}.

\bibitem[HN20]{HuNorton}
X.~Hu and C.~Norton, \emph{General variational formulas for Abelian differentials}, Int. Math. Res. Not. (2020), no.~12, 3540--3581, \href{https://doi.org/10.1093/imrn/rny106}{doi:10.1093/imrn/rny106}.

\bibitem[HK07]{HK}
M.-x.~Huang and A.~Klemm, \emph{Holomorphic anomaly in gauge theories and matrix models}, J. High Energy Phys. \textbf{2007} (2007), no.~9, article~054, \href{https://doi.org/10.1088/1126-6708/2007/09/054}{doi:10.1088/1126-6708/2007/09/054}.

\bibitem[HKQ09]{HKQ}
M.-x.~Huang, A.~Klemm, and S.~Quackenbush, \emph{Topological string theory on compact Calabi--Yau: modularity and boundary conditions}, Lecture Notes in Phys. \textbf{757} (2009), 45--102, \href{https://doi.org/10.1007/978-3-540-68030-7_3}{doi:10.1007/978-3-540-68030-7\_3}.

\bibitem[IILZ25]{IILZ}
N.~Iorgov, K.~Iwaki, O.~Lisovyy, and Y.~Zhuravlov, \emph{Many-faced Painlev\'e~I: irregular conformal blocks, topological recursion, and holomorphic anomaly approaches}, \href{https://arxiv.org/abs/2505.16803}{arXiv:2505.16803}.

\bibitem[IK22]{IK}
K.~Iwaki and O.~Kidwai, \emph{Topological recursion and uncoupled BPS structures I: BPS spectrum and free energies}, Adv. Math. \textbf{398} (2022), article~108191, \href{https://doi.org/10.1016/j.aim.2022.108191}{doi:10.1016/j.aim.2022.108191}.

\bibitem[IKT23]{IKT}
K.~Iwaki, T.~Koike, and Y.~Takei, \emph{Voros coefficients for the hypergeometric differential equations and Eynard--Orantin's topological recursion: Part~I---For the Weber equation}, Ann. Henri Poincar\'e \textbf{24} (2023), 1305--1353, \href{https://doi.org/10.1007/s00023-022-01235-4}{doi:10.1007/s00023-022-01235-4}.

\bibitem[KMZ16]{KMZ}
R.~Kashaev, M.~Mari\~no, and S.~Zakany, \emph{Matrix models from operators and topological strings, 2}, Ann. Henri Poincar\'e \textbf{17} (2016), 2741--2781, \href{https://doi.org/10.1007/s00023-016-0471-z}{doi:10.1007/s00023-016-0471-z}.

\bibitem[KM98]{KM}
M.~Kontsevich and Y.~I.~Manin, \emph{Relations between the correlators of the topological sigma-model coupled to gravity}, Commun. Math. Phys. \textbf{196} (1998), 385--398, \href{https://doi.org/10.1007/s002200050426}{doi:10.1007/s002200050426}.

\bibitem[LLLZ09]{LLLZ}
J.~Li, C.-C.~M.~Liu, K.~Liu, and J.~Zhou, \emph{A mathematical theory of the topological vertex}, Geom. Topol. \textbf{13} (2009), 527--621, \href{https://doi.org/10.2140/gt.2009.13.527}{doi:10.2140/gt.2009.13.527}.

\bibitem[Mar08]{MarinoLocalCurves}
M.~Mari\~no, \emph{Open string amplitudes and large order behavior in topological string theory}, J. High Energy Phys. \textbf{2008} (2008), no.~3, article~060, \href{https://doi.org/10.1088/1126-6708/2008/03/060}{doi:10.1088/1126-6708/2008/03/060}.

\bibitem[MZ16]{MZ}
M.~Mari\~no and S.~Zakany, \emph{Matrix models from operators and topological strings}, Ann. Henri Poincar\'e \textbf{17} (2016), 1075--1108, \href{https://doi.org/10.1007/s00023-015-0422-0}{doi:10.1007/s00023-015-0422-0}.

\bibitem[OP06]{OP}
A.~Okounkov and R.~Pandharipande, \emph{Gromov--Witten theory, Hurwitz theory, and completed cycles}, Ann. of Math. \textbf{163} (2006), 517--560, \href{https://doi.org/10.4007/annals.2006.163.517}{doi:10.4007/annals.2006.163.517}.

\bibitem[PS10]{PS}
S.~Pasquetti and R.~Schiappa, \emph{Borel and Stokes nonperturbative phenomena in topological string theory and $c=1$ matrix models}, Ann. Henri Poincar\'e \textbf{11} (2010), 351--431, \href{https://doi.org/10.1007/s00023-010-0044-5}{doi:10.1007/s00023-010-0044-5}.

\bibitem[Str95]{Strominger}
A.~Strominger, \emph{Massless black holes and conifolds in string theory}, Nuclear Phys. B \textbf{451} (1995), 96--108, \href{https://doi.org/10.1016/0550-3213(95)00287-3}{doi:10.1016/0550-3213(95)00287-3}.

\bibitem[Yam80]{Yamada}
A.~Yamada, \emph{Precise variational formulas for abelian differentials}, Kodai Math. J. \textbf{3} (1980), 114--143, \href{https://doi.org/10.2996/kmj/1138036124}{doi:10.2996/kmj/1138036124}.

\end{thebibliography}
\end{document}